\documentclass{interact}

\usepackage{mathtools}
\usepackage{enumitem}
\usepackage{booktabs,tabularx,array}
\usepackage{ragged2e}
\usepackage{tikz}
\usetikzlibrary{calc}
\usepackage{microtype}
\usepackage[numbers,sort&compress,merge]{natbib}
\bibpunct[, ]{(}{)}{,}{n}{,}{,}

\usepackage[hidelinks]{hyperref}
\hypersetup{
  pdftitle={Finite-mass soliton-type rigidity and four-channel reduction for the three-dimensional nonradial focusing energy-critical nonlinear Schrodinger equation},
  pdfauthor={Pang-Hung Chung and Dan Han},
  pdfsubject={Focusing energy-critical nonlinear Schrodinger equation},
  pdfkeywords={energy-critical; nonlinear Schrodinger equation; focusing equation; below-threshold scattering; concentration compactness}
}
\numberwithin{equation}{section}
\allowdisplaybreaks

\theoremstyle{plain}
\newtheorem{theorem}{Theorem}[section]
\newtheorem{proposition}[theorem]{Proposition}
\newtheorem{lemma}[theorem]{Lemma}
\newtheorem{corollary}[theorem]{Corollary}
\theoremstyle{definition}
\newtheorem{definition}[theorem]{Definition}
\theoremstyle{remark}
\newtheorem{remark}[theorem]{Remark}

\newcommand{\R}{\mathbb R}
\newcommand{\C}{\mathbb C}
\newcommand{\ii}{\mathrm i}
\newcommand{\Rea}{\operatorname{Re}}
\newcommand{\Ima}{\operatorname{Im}}
\newcommand{\supp}{\operatorname{supp}}
\newcommand{\M}{\mathsf M}
\newcommand{\cS}{C_{\mathrm S}}
\newcommand{\F}{\mathcal F}
\newcommand{\Sspace}{\mathcal S}
\newcommand{\dd}{\,\mathrm d}
\newcommand{\one}{\mathbf 1}

\articletype{ORIGINAL ARTICLE}
\title{Finite-mass soliton-type rigidity and four-channel reduction for the three-dimensional nonradial focusing energy-critical nonlinear Schr\"odinger equation}
\author{%
\name{Pang-Hung Chung\textsuperscript{a} and Dan Han\textsuperscript{b}\thanks{CONTACT Dan Han. Email: dan.han@louisville.edu}}
\affil{\textsuperscript{a}Department of Applied Mathematics, Guangdong University of Education, Guangzhou 510640, P. R. China; \textsuperscript{b}Department of Mathematics, University of Louisville, Louisville, KY 40245, USA}%
}

\begin{document}
\maketitle

\begin{abstract}
The concentration--compactness channels below the ground-state threshold are investigated for the three-dimensional nonradial focusing energy-critical nonlinear Schr\"odinger equation. After one-sided normalization, a minimal critical element falls into four classes: the finite-time, rapid-cascade, bounded-scale finite-mass, and residual quasi-soliton channels. The first three classes are rigorously excluded. The main result shows, without radial symmetry, zero momentum, or a slow spatial center, that every finite-mass bounded-scale almost-periodic solution is identically zero. Consequently, any minimal counterexample to below-threshold scattering must lie in the residual quasi-soliton channel; if its scale is bounded, then it has infinite mass at every time.
\end{abstract}

\begin{keywords}
energy-critical; nonlinear Schr\"odinger equation; focusing equation; below-threshold scattering; concentration compactness
\end{keywords}

\begin{amscode}
35Q55; 35B40; 35B44; 35P25
\end{amscode}

\section{Introduction}

We study the three-dimensional focusing energy-critical nonlinear Schr\"odinger equation
\begin{equation}\label{eq:nls}
 \ii\partial_tu+\Delta u+|u|^4u=0,
 \qquad (t,x)\in I\times\R^3.
\end{equation}
Equation~\eqref{eq:nls} is a Hamiltonian dispersive evolution in which the linear Schr\"odinger flow spreads spatial concentration, whereas the focusing quintic interaction can reinforce it. The competition between these two mechanisms is measured at the energy scale and permits both scattering-type dynamics and concentration scenarios. In three dimensions the quintic power is invariant at the homogeneous energy regularity, so the equation lies at the borderline where scale concentration cannot be ruled out by subcritical compactness. The purpose of the paper is to isolate and exclude several possible critical concentration channels below the ground-state threshold, with particular attention to the nonradial motion of the concentration core.

Here $u:I\times\R^3\to\C$, and $I\subset\R$ is a lifespan interval containing the initial time. Set $u_0:=u(0)$. For $1\le p\le\infty$, write $\|f\|_p:=\|f\|_{L^p(\R^3)}$, and define
\begin{align*}
 \dot H^1(\R^3)
 &:=\{f\in L^6(\R^3):\nabla f\in L^2(\R^3)\},
 &\|f\|_{\dot H^1}&:=\|\nabla f\|_2,\\
 H^1(\R^3)&:=L^2(\R^3)\cap\dot H^1(\R^3).
\end{align*}
Here $C_c^\infty(\R^3)$ denotes the space of smooth compactly supported functions. For a time interval $J$ and $1\le q,r<\infty$, define
\[
 \|v\|_{L_t^qL_x^r(J\times\R^3)}
 :=\left(\int_J\left(\int_{\R^3}|v(t,x)|^r\dd x\right)^{q/r}\dd t\right)^{1/q},
\]
with the corresponding essential-supremum definition when $q=\infty$ or $r=\infty$. In particular,
$\|v\|_{L^{10}_{t,x}(J\times\R^3)}:=\bigl(\int_J\int_{\R^3}|v(t,x)|^{10}\dd x\dd t\bigr)^{1/10}$.
The notation $L^{10}_{t,x,\mathrm{loc}}(J\times\R^3)$ means that this norm is finite on every compact subinterval of $J$. The notation $A\lesssim B$ means $A\le CB$ for a constant $C$ independent of the displayed variables. Equation \eqref{eq:nls} is invariant under the scaling
\[
 u(t,x)\longmapsto u_\lambda(t,x)
 :=\lambda^{1/2}u(\lambda^2t,\lambda x),
 \qquad \lambda>0,
\]
which preserves the homogeneous energy norm. Thus the quintic nonlinearity is exactly energy-critical in dimension three. The term ``critical'' is first of all a scaling statement: with $e^{\ii t\Delta}$ denoting the free Schr\"odinger group, the linear Strichartz estimate and Sobolev embedding yield
\[
 \|e^{\ii t\Delta}u_0\|_{L^{10}_{t,x}(\R\times\R^3)}
 \lesssim \|u_0\|_{\dot H^1}.
\]
When the right-hand side is sufficiently small, the Duhamel map is a contraction in the critical spacetime norm, giving global small-data solutions and scattering; an early systematic energy-space framework is due to Ginibre and Velo \cite{GinibreVelo1985}. For large data, however, the scaling $u\mapsto u_\lambda$ can compress the spatial scale to $\lambda^{-1}$ and the time scale to $\lambda^{-2}$ while leaving the energy and the $\dot H^1$ size unchanged. Translation symmetry also permits the concentration core to move through space. Conservation of energy alone therefore does not provide the compactness available in subcritical problems: loss of control can occur only through concentration of scale, drift of the center, or a coupling of the two. These are precisely the phenomena addressed by concentration compactness and by the scale--translation invariant estimates developed below.

For $f\in\dot H^1(\R^3)$, define the energy by
\begin{equation}\label{eq:energy}
 E(f):=\frac12\int_{\R^3}|\nabla f(x)|^2\dd x
       -\frac16\int_{\R^3}|f(x)|^6\dd x.
\end{equation}
Let
\[
 W(x):=\left(1+\frac{|x|^2}{3}\right)^{-1/2}.
\]
Then $W$ is the positive Aubin--Talenti ground state, satisfies $-\Delta W=W^5$, and is an optimizer for the sharp Sobolev inequality \cite{Aubin1976,Talenti1976,Lieb1983}. For this initial datum, we work throughout on the stable below-threshold branch
\begin{equation}\label{eq:threshold}
 E(u_0)<E(W),
 \qquad
 \|\nabla u_0\|_2<\|\nabla W\|_2.
\end{equation}
Besides placing the data below the ground state, this condition yields a uniform coercive gap through the sharp Sobolev inequality: along the entire lifespan there exists $c_0>0$ such that
\[
 \|\nabla u(t)\|_2^2-\|u(t)\|_6^6
 \ge c_0\|\nabla u(t)\|_2^2.
\]
The negative potential energy is the main focusing difficulty, and this positive gap is the basic source from which Morawetz positivity will be recovered.

A strong solution on an interval $J\subset\R$ means a function
\[
 u\in C(J;\dot H^1(\R^3))\cap L^{10}_{t,x,\mathrm{loc}}(J\times\R^3)
\]
that satisfies, for every $t,t_0\in J$,
\[
 u(t)=e^{\ii(t-t_0)\Delta}u(t_0)
 +\ii\int_{t_0}^t e^{\ii(t-s)\Delta}|u(s)|^4u(s)\dd s.
\]
It is maximal-lifespan if it has no extension as a strong solution beyond either endpoint of $J$. If $\sup J=+\infty$, it scatters forward when there exists $u_+\in\dot H^1(\R^3)$ such that
\[
 \lim_{t\to+\infty}\|u(t)-e^{\ii t\Delta}u_+\|_{\dot H^1}=0;
\]
backward scattering is defined analogously. The below-threshold scattering problem asks whether every datum satisfying \eqref{eq:threshold} generates a global strong solution that scatters in both time directions.

To describe critical concentration that could obstruct scattering, let $N:I\to(0,\infty)$ and $x:I\to\R^3$ denote a frequency scale and a spatial center. A solution is called almost periodic modulo scaling and translation if the normalized orbit
\[
 \mathcal K_u
 :=\left\{
 N(t)^{-1/2}u\!\left(t,x(t)+\frac{\cdot}{N(t)}\right):t\in I
 \right\}
\]
is precompact in $\dot H^1(\R^3)$. Here $N(t)^{-1}$ is the characteristic physical scale at time $t$, while $x(t)$ only locates the concentration core and is not assumed to be differentiable. The definition retains both critical symmetries and makes clear the central nonradial difficulty: compactness is available only in moving coordinates and at a varying scale, not in a fixed physical frame.

Whether every solution satisfying \eqref{eq:threshold} scatters globally is the central question in the three-dimensional nonradial focusing problem. It is essential to distinguish the focusing and defocusing signs. In dimension $d\ge3$, write the energy-critical family as
\begin{equation}\label{eq:sign-family}
 \ii\partial_tu+\Delta u+\mu |u|^{\frac4{d-2}}u=0,
 \qquad \mu\in\{-1,+1\},
\end{equation}
where $\mu=-1$ is defocusing and $\mu=+1$ is focusing. The corresponding energy is
\begin{equation}\label{eq:sign-energy}
 E_\mu(u)
 :=\frac12\|\nabla u\|_2^2
   -\mu\frac{d-2}{2d}
    \|u\|_{\frac{2d}{d-2}}^{\frac{2d}{d-2}}.
\end{equation}
For the family \eqref{eq:sign-family} with energy \eqref{eq:sign-energy}, $E_{-1}$ controls the critical gradient norm for all defocusing energy data, and there is no nonzero finite-energy stationary solution analogous to the focusing ground state. In the focusing case, the energy contains a negative potential term and
\[
 W_d(x):=\left(1+\frac{|x|^2}{d(d-2)}\right)^{-\frac{d-2}{2}}
\]
is itself a nonscattering stationary solution. Scattering can therefore be discussed only relative to the sharp threshold and its stable branch. The focusing theory is not obtained by merely changing a sign in the defocusing proof. The principal structural differences are summarized below.

\begin{table}[htbp]
\centering
\caption{Structural comparison of the defocusing and focusing energy-critical NLS.}
\label{tab:focus-defocus}
\small
\setlength{\tabcolsep}{4pt}
\renewcommand{\arraystretch}{1.18}
\begin{tabularx}{\textwidth}{>{\RaggedRight\arraybackslash}p{2.2cm}>{\RaggedRight\arraybackslash}X>{\RaggedRight\arraybackslash}X}
\toprule
Feature & Defocusing case $\mu=-1$ & Focusing case $\mu=+1$ \\
\midrule
Energy and coercivity
& $E_{-1}=\frac12\|\nabla u\|_2^2+\frac{d-2}{2d}\|u\|_{2d/(d-2)}^{2d/(d-2)}$ directly controls the $\dot H^1$ norm for all energy data.
& $E_{+1}=\frac12\|\nabla u\|_2^2-\frac{d-2}{2d}\|u\|_{2d/(d-2)}^{2d/(d-2)}$ is coercive only below the ground-state threshold and on the stable gradient branch.\\
Stationary objects and threshold
& The associated elliptic equation has no nonzero finite-energy solution, so there is no stationary ground-state obstruction.
& A ground state $W_d$ exists and satisfies $-\Delta W_d=W_d^{(d+2)/(d-2)}$; it is an optimizer, a threshold object, and an actual nonscattering solution.\\
Morawetz/virial sign
& The nonlinear term has the favorable sign, and interaction Morawetz estimates can directly produce positive spacetime control.
& The potential term has the unfavorable sign; positivity must be recovered through below-threshold coercivity, localization, frequency truncation, or a new covariance structure.\\
Global scattering
& Global well-posedness and scattering for general energy data are known in $d=3$, $d=4$, and $d\ge5$ \cite{CKSTT2008,RyckmanVisan2007,Visan2007}.
& The below-threshold theory depends on dimension and symmetry; the general theory is complete for $d\ge4$, whereas the three-dimensional nonradial endpoint has special infrared and center-drift obstructions.\\
Rigidity of minimal elements
& Frequency-localized interaction Morawetz estimates and long-time Strichartz estimates exploit global positivity \cite{CKSTT2008,KillipVisan2012}.
& Concentration compactness first produces a below-threshold minimal element; rigidity must additionally address ground-state concentration, infinite-mass infrared tails, and a moving center.\\
\bottomrule
\end{tabularx}
\end{table}

The contrast in Table~\ref{tab:focus-defocus} isolates the two additional focusing obstructions: the ground-state threshold and the unfavorable Morawetz sign. The radial defocusing theory in three dimensions was initiated by Bourgain \cite{Bourgain1999}, and Grillakis later treated large radial data by a different regularity and Morawetz argument \cite{Grillakis2000}. Tao extended radial global well-posedness and scattering to all $d\ge3$ \cite{Tao2005}; Tao and Visan developed the critical stability theory needed in higher dimensions, where the nonlinearity is no longer locally Lipschitz \cite{TaoVisan2005}. For general data, the three-dimensional energy-space theory was proved by Colliander, Keel, Staffilani, Takaoka, and Tao \cite{CKSTT2008}; the four-dimensional result is due to Ryckman and Visan \cite{RyckmanVisan2007}, and the case $d\ge5$ to Visan \cite{Visan2007}. Killip and Visan later gave another three-dimensional proof based on concentration compactness and long-time Strichartz estimates \cite{KillipVisan2012}. Thus the defocusing problem has a complete energy-space scattering theory, but its decisive positivity disappears under the focusing sign.

Recent defocusing work has increasingly concerned probabilistic data or models in which the homogeneous structure is broken. Shen, Soffer, and Wu established almost-sure scattering for nonradial energy-critical equations in dimensions three and four \cite{ShenSofferWu2021}, and Marsden developed a probabilistic scattering theory in dimensions $d>6$ \cite{Marsden2022}. For spatially inhomogeneous energy-critical equations, Park proved radial defocusing scattering in several dimensions and parameter ranges \cite{Park2023}, while Yang, Zhang, and Liu completed the general nonradial defocusing theory for all $d\ge3$ \cite{YangZhangLiu2026}. These results show that defocusing positivity can be combined with randomization, Lorentz-space estimates, and concentration compactness; they do not remove the ground state or the negative potential energy in the homogeneous focusing problem.

The dimensional dependence of the focusing theory is already visible in the infrared tail of the ground state. The preceding explicit formula gives
\[
 W_d(x)\sim |x|^{-(d-2)}\quad(|x|\to\infty),
\]
and therefore
\begin{equation}\label{eq:ground-state-L2-dimension}
 W_d\in L^2(\R^d)\quad\Longleftrightarrow\quad d\ge5.
\end{equation}
The criterion \eqref{eq:ground-state-L2-dimension} shows that negative regularity and double-Duhamel arguments can recover finite mass when $d\ge5$; dimension four is logarithmically divergent; and the infrared divergence is stronger in dimension three. The high-dimensional proof cannot simply be pushed down to $d=3$. Beyond below-threshold scattering, Duyckaerts and Merle classified radial threshold dynamics in dimensions three, four, and five \cite{DuyckaertsMerle2009}, and Li and Zhang extended the threshold analysis to $d\ge6$ \cite{LiZhang2009}. The following table records representative results most relevant to the present setting and is not intended as an exhaustive account of threshold dynamics.

\begin{table}[htbp]
\centering
\caption{Representative below-threshold scattering results for the focusing energy-critical NLS (non-exhaustive).}
\label{tab:focusing-status}
\scriptsize
\setlength{\tabcolsep}{2.2pt}
\renewcommand{\arraystretch}{1.15}
\begin{tabularx}{\textwidth}{>{\Centering\arraybackslash}p{1.30cm}>{\RaggedRight\arraybackslash}p{2.15cm}>{\RaggedRight\arraybackslash}p{1.75cm}>{\RaggedRight\arraybackslash}X>{\RaggedRight\arraybackslash}p{2.35cm}}
\toprule
Dimension & Critical nonlinearity & Data/symmetry & Below-threshold result and status & Representative method/reference \\
\midrule
$d=3$
& Quintic $|u|^4u$
& Radial $\dot H^1$
& The global well-posedness/scattering versus blow-up dichotomy below the ground state is known.
& Concentration--compactness rigidity and localized virial; Kenig--Merle \cite{KenigMerle2006}.\\
$d=3$
& Quintic $|u|^4u$
& General nonradial $\dot H^1$
& Unconditional below-threshold scattering does not follow from the defocusing or higher-dimensional theory. A general low-speed criterion and a cylindrical conditional criterion are available \cite{ChungHan2026,ChungHanCylindrical2026}; the present paper removes drift and symmetry assumptions in the finite-mass bounded-scale channel and also closes the finite-time and rapid-cascade channels.
& Since $W\notin L^2$, infrared tails and center drift coexist; a local Galilean/interaction-Morawetz rigidity mechanism is required.\\
$d=4$
& Cubic $|u|^2u$
& General nonradial $\dot H^1$
& Global well-posedness and scattering below the ground state are known.
& Long-time Strichartz estimates overcome the logarithmic $L^2$ endpoint; Dodson \cite{Dodson2019}.\\
$d\ge5$
& $|u|^{4/(d-2)}u$
& General nonradial $\dot H^1$
& Global well-posedness and scattering below the ground state are known.
& Negative regularity, double Duhamel, and recovery of finite mass; Killip--Visan \cite{KillipVisan2010}.\\
\bottomrule
\end{tabularx}
\end{table}

Table~\ref{tab:focusing-status} emphasizes that the rigidity mechanisms at the three-dimensional nonradial endpoint differ from those in dimensions four and higher. Recent progress directly concerning the homogeneous equation includes the global weak solutions and weak--strong uniqueness constructed by Cheng, Guo, and Zheng through energy-critical Ginzburg--Landau approximation \cite{ChengGuoZheng2023}, and the extension by Ma, Miao, Murphy, and Zheng of four-dimensional nonradial threshold dynamics at ground-state energy and below-ground-state gradient \cite{MaMiaoMurphyZheng2025}. These concern, respectively, weak-solution construction and threshold classification, and do not replace the strong-solution rigidity problem studied here.

A substantial part of recent focusing energy-critical research concerns potentials or spatially inhomogeneous coefficients. Yang, Zeng, and Zhang classified threshold solutions in the presence of an inverse-square potential \cite{YangZengZhang2020}. Guzm\'an and Murphy treated the three-dimensional nonradial energy-critical inhomogeneous equation \cite{GuzmanMurphy2021}; Guzm\'an and Xu \cite{GuzmanXu2024} and Park \cite{Park2024} subsequently extended nonradial below-threshold scattering to broader dimensional and parameter ranges. Liu, Yang, and Zhang classified threshold dynamics for inhomogeneous models in dimensions three through five \cite{LiuYangZhang2024}; Campos, Farah, and Murphy constructed and classified special threshold solutions for a three-dimensional energy-critical inhomogeneous cubic model \cite{CamposFarahMurphy2026}; and Ma, Song, Yang, and Zhang recently proved threshold scattering for a model with a repulsive inverse-square potential \cite{MaSongYangZhang2026}. Since spatial coefficients break translation symmetry, their concentration cores cannot drift freely as in the homogeneous three-dimensional equation. These models therefore provide a useful contrast rather than a direct substitute for the present result.

For systems and competing nonlinearities, Farah and Hespanha proved a scattering--blow-up dichotomy for radial focusing energy-critical Schr\"odinger systems \cite{FarahHespanha2025}, while Bellazzini, Dinh, and Forcella gave an interaction-Morawetz proof of below-ground-state scattering for a three-dimensional nonradial equation with competing nonlinearities \cite{BellazziniDinhForcella2022}. The latter and the present work both use functionals in the difference variable, but the pure focusing quintic term considered here requires positivity to be rebuilt from local Galilean covariance and below-threshold well coercivity.

Kenig and Merle introduced the critical-element method in the three-dimensional radial setting \cite{KenigMerle2006}: if scattering fails, linear profile decomposition, nonlinear stability, and Palais--Smale compactness yield a nonscattering solution of minimal energy whose orbit is precompact modulo symmetries. The variational foundation is Lions's concentration--compactness principle \cite{Lions1984I,Lions1984II}; G\'erard precisely described the loss of compactness caused by translation and scaling in the critical Sobolev embedding \cite{Gerard1998}. Energy-critical profile decomposition and critical regularity theory may also be found in \cite{BahouriGerard1999,Keraani2001,KillipVisan2013}. In dimensions $d\ge5$, Killip and Visan excluded below-threshold focusing minimal elements using negative regularity \cite{KillipVisan2010}; in $d=4$, Dodson handled the logarithmic endpoint through long-time Strichartz estimates \cite{Dodson2019}. The three-dimensional nonradial quintic problem lies at a lower infrared endpoint: some frequency recursions are sign-independent, but excluding a long-time compact orbit must simultaneously overcome negative potential energy, infinite mass, and center drift.

These results show that concentration compactness, modulation, virial/Morawetz identities, and below-threshold variational structure remain common tools across focusing critical problems. What varies with the model is translation symmetry, infrared mass, and the sign of the nonlinear term. Our purpose is not to subsume the neighboring models, but to isolate and exclude the finite-mass bounded-scale obstruction in the homogeneous three-dimensional nonradial equation.

The nonradial pure-energy setting presents two related obstacles. First, data in $\dot H^1(\R^3)$ need not lie in $L^2(\R^3)$, so total mass, center of mass, and the usual finite-mass virial functional may be undefined. Second, almost periodicity gives no differentiability, velocity bound, or sublinear drift for $x(t)$. A localized virial identity around a preselected center produces center-velocity and shell-flux terms, thereby reintroducing first-moment or drift assumptions that are not available. The classical interaction Morawetz identity and its tensor-product formulation go back to \cite{Morawetz1968,LinStrauss1978,CGT2009}; the bilinear virial identities of Planchon and Vega further reveal the geometry of difference-variable functionals \cite{PlanchonVega2009}. In the focusing case, however, tensor positivity alone is defeated by the negative potential energy.

Two recent companion papers by the authors address neighboring rigidity mechanisms for the same three-dimensional focusing problem. In \cite{ChungHan2026}, a fixed-center localized virial identity excludes bounded-scale critical elements under an $L_t^\infty L_x^q$ bound, $2\le q\le6$, together with a corresponding sequential low-speed condition on the concentration center. In the cylindrically symmetric class, \cite{ChungHanCylindrical2026} isolates a best fixed-axis window condition and a low-frequency tail smallness condition, and excludes these entrances by a shifted localized virial argument and, at the finite-mass level, an axial zero-momentum normalization. The present paper has a different scope: in the finite-mass bounded-scale channel it uses a self-interaction Morawetz functional depending only on the difference variable $x-y$, so no spatial center is selected or tracked, and neither symmetry, center-speed control, nor zero total momentum is assumed.

The core mechanism is summarized by a local covariance block. Fix $R>0$, $c\in\R^3$, and a radial cutoff $\zeta\in C_c^\infty(\R^3)$. Set $B(c,R):=\{x\in\R^3:|x-c|<R\}$, $\zeta_{R,c}(x):=\zeta((x-c)/R)$, and $p[u](x):=\Ima(\overline{u(x)}\nabla u(x))$. At a fixed time, with $u=u(t,\cdot)$, define
\[
 M_{R,c}:=\int\zeta_{R,c}^2|u|^2\dd x,\qquad
 P_{R,c}:=\int\zeta_{R,c}^2p[u]\dd x,\qquad
 K_{R,c}:=\int\zeta_{R,c}^2(|\nabla u|^2-|u|^6)\dd x.
\]
When $M_{R,c}>0$, choose the optimal local Galilean parameter $\xi_{R,c}:=P_{R,c}/M_{R,c}$. Then
\[
 M_{R,c}K_{R,c}-|P_{R,c}|^2
 =M_{R,c}\int_{\R^3}\zeta_{R,c}^2
 \left(\left|\nabla\bigl(e^{-\ii x\cdot\xi_{R,c}}u\bigr)\right|^2-|u|^6\right)\dd x.
\]
Thus the squared-momentum term is not an error to be estimated separately: it subtracts the average transport velocity from the local kinetic energy, leaving a positive covariance. Below-threshold coercivity turns this covariance into positive localized energy, at the cost of only an $O(R^{-2})$ cutoff error.

To avoid losing parameters at a preselected physical radius, fix $R_0>0$ and $J\ge1$, and average logarithmically over $R\in[R_0,e^JR_0]$ with measure $\dd R/R$. Differentiating the resulting self-interaction functional produces a flat main block, equal to the logarithmic average of the covariance above, together with explicit kernel-derivative and bi-Laplacian remainders. Bounded-scale almost periodicity gives a time-uniform positive lower bound for the flat block, logarithmic averaging reduces the remainders to $O(J^{-1})+O((JR_0^2)^{-1})$, and finite mass bounds the functional at the time endpoints. Fixing first $R_0$ and then a sufficiently large $J$ yields the contradiction. The construction contains neither $x'(t)$ nor $|x(t)|=o(t)$ and directly addresses center drift; the exact functionals are defined before use in Sections~5--6.

We now state the four-channel decomposition of a minimal critical element. If below-threshold scattering fails, concentration compactness permits one to reselect a nonzero one-sided normalized minimal critical element. Let $T_+\in(0,\infty]$ be its forward maximal-lifespan endpoint and let $N:[0,T_+)\to(0,\infty)$ be its frequency scale; thus
\[
 u_c:[0,T_+)\times\R^3\to\C.
\] The solution does not scatter forward, and in the global case it may be normalized so that $N(t)\ge1$. Minimality also implies the no-waste Duhamel formula; it is therefore not an additional assumption in the following classification. When $T_+=\infty$, set
\[
 \mathsf K_+(u_c):=\int_0^\infty N(t)^{-1}\dd t.
\]
Let $\mathfrak T(u_c)$ denote one of the following four mutually exclusive channels:
\begin{enumerate}[label=\textup{(\Roman*)},leftmargin=2.8em]
 \item \textbf{$\mathsf{FT}$: finite-time channel.} $T_+<\infty$.

 \item \textbf{$\mathsf{RC}$: rapid low-to-high frequency cascade.} $T_+=\infty$ and $\mathsf K_+(u_c)<\infty$.

 \item \textbf{$\mathsf{FM}$: finite-mass bounded-scale channel.}
 \begin{equation}\label{eq:intro-channel-FM}
  T_+=\infty,
  \qquad
  \sup_{t\ge0}N(t)<\infty,
  \qquad
  u_c(t_0)\in L^2(\R^3)\ \text{for some }t_0\ge0.
 \end{equation}
 Since $N(t)\ge1$ and the time half-line is infinite, \eqref{eq:intro-channel-FM} automatically implies $\mathsf K_+(u_c)=\infty$.

 \item \textbf{$\mathsf{RQ}$: residual quasi-soliton channel.}
 \begin{equation}\label{eq:intro-channel-RQ}
  \left\{
  \begin{aligned}
   &T_+=\infty,
   \qquad
   \mathsf K_+(u_c)=\infty,\\
   &\sup_{t\ge0}N(t)=\infty
   \quad\text{or}\quad
   u_c(t)\notin L^2(\R^3)\ \text{for every }t\ge0.
  \end{aligned}
  \right.
 \end{equation}
\end{enumerate}
The classification first separates finite and infinite lifespan, then divides the global branch according to whether $\mathsf K_+$ is finite, and finally removes the bounded-scale finite-mass subclass $\mathsf{FM}$ from the branch with infinite $\mathsf K_+$. Its complement is exactly $\mathsf{RQ}$ as defined in \eqref{eq:intro-channel-RQ}. We exclude $\mathsf{FT}$, $\mathsf{RC}$, and $\mathsf{FM}$, leaving $\mathsf{RQ}$ as the residual channel for the full below-threshold scattering problem.

Among the four classes, $\mathsf{FM}$ most directly displays the new obstruction. Finite mass makes the self-interaction Morawetz functional meaningful, and bounded scale prevents the concentration core from disappearing at every fixed physical radius, but the spatial center may still drift arbitrarily. The local Galilean covariance and logarithmic scale average above are designed precisely for this situation.

\begin{theorem}[Bounded-scale finite-mass rigidity]\label{thm:finite-mass-rigidity}
Let $u$ be a global strong solution to \eqref{eq:nls} on $[0,\infty)$ such that
\[
 u\in C([0,\infty);H^1(\R^3)).
\]
Assume further that
\begin{equation}\label{eq:main-threshold}
 E(u(0))<E(W),
 \qquad
 \|\nabla u(0)\|_2<\|\nabla W\|_2,
\end{equation}
that $u$ is almost periodic modulo scaling and translation, and that there exist constants $0<N_-\le N_+<\infty$ such that its frequency scale $N:[0,\infty)\to(0,\infty)$ obeys
\begin{equation}\label{eq:main-bounded-scale}
 N_-\le N(t)\le N_+
 \qquad(t\ge0).
\end{equation}
Then
\[
 u\equiv0.
\]
\end{theorem}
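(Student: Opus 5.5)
We argue by contradiction: assume $u\not\equiv0$ and build a self-interaction Morawetz functional in the difference variable $x-y$, logarithmically averaged over radii, as sketched in the introduction. The first step collects the a priori information. Since $u\in C([0,\infty);H^1)$, mass conservation gives $M:=\|u(t)\|_2^2=\|u(0)\|_2^2$ for all $t$. Energy conservation together with \eqref{eq:main-threshold} and the sharp Sobolev inequality gives the uniform coercive gap $\|\nabla u(t)\|_2^2-\|u(t)\|_6^6\ge c_0\|\nabla u(t)\|_2^2$ and $\sup_t\|\nabla u(t)\|_2<\|\nabla W\|_2$. Almost periodicity combined with \eqref{eq:main-bounded-scale} yields the following: for every $\eta>0$ there is $C(\eta)$ with $\int_{|x-x(t)|>C(\eta)}|\nabla u|^2+|u|^6\le\eta$ uniformly in $t$. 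Since $u\not\equiv0$, the orbit closure does not contain $0$, so $\inf_t\|\nabla u(t)\|_2>0$. Hence there is $R_*$ such that for every $t$ a ball $B(x(t),R_*)$ carries a fixed positive fraction of the kinetic energy. By the local coercivity discussed in the introduction, applied to $\zeta_{R,c}u$ (whose gradient stays below $\|\nabla W\|_2$ up to $O(R^{-1})$ errors), we get the local lower bound $\int\zeta_{R,c}^2(|\nabla(e^{-\ii x\cdot\xi}u)|^2-|u|^6)\gtrsim\int\zeta_{R,c}^2|\nabla(e^{-\ii x\cdot\xi}u)|^2-O(R^{-2}M)$ for every $\xi$.

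The second step defines the functional. Let $\psi_R$ be the radial weight with $\nabla\psi_R(z)=z\,\phi(|z|/R)$-type truncation, i.e.\ $\psi_R\approx|z|$ for $|z|\le R$ and constant beyond, and set
\[
 \M_{R_0,J}(t):=\frac1J\int_{R_0}^{e^JR_0}\iint|u(t,y)|^2\,\nabla\psi_R(x-y)\cdot p[u](t,x)\dd x\dd y\,\frac{\dd R}{R}.
\]
Finite mass gives $|\M_{R_0,J}(t)|\lesssim M\|u\|_2\|\nabla u\|_2\cdot\frac1J\int R\,\frac{\dd R}{R}\lesssim_{M} e^JR_0/J$, uniformly in $t$; this bound is crude but independent of $T$. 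Differentiating with the local conservation laws for $|u|^2$ and $p[u]$, we symmetrize in $(x,y)$. In the region where $\nabla^2\psi_R$ is flat, the momentum-times-momentum term combines with the kinetic-times-mass term, and after integrating in the center $c$ it reproduces $\frac1J\int\!\int_c(M_{R,c}K_{R,c}-|P_{R,c}|^2)\dd c\,\frac{\dd R}{R}$ (up to normalization $R^{-3}$). By the Galilean identity from the introduction, this equals the averaged covariance, which by the first step is bounded below by $c_1>0$ uniformly in $t$. To see this, take $c$ near $x(t)$ with $R\ge R_*$, and use $\xi_{R,c}$ to remove transport, noting that $\inf_\xi\|\nabla(e^{-\ii x\xi}u)\|_{L^2(B)}$ is bounded below by compactness, since the orbit avoids plane waves. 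The remainders, namely the terms with $\nabla^3\psi_R$ (shell flux where $|x-y|\sim R$) and the bi-Laplacian term $\Delta^2\psi_R$, are $O(M^2R^{-2})$ and $O(\text{shell})$. Averaging in $\log R$ makes the shell contributions telescope to $O(J^{-1})$, and the bi-Laplacian contributes $O((JR_0^2)^{-1})$.

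Conclusion: $\frac{\dd}{\dd t}\M\ge c_1-C/J-C/(JR_0^2)$. We choose $R_0\ge R_*$, then $J$ large so that the derivative is $\ge c_1/2$. Integrating over $[0,T]$ gives $c_1T/2\le 2\sup_t|\M|\lesssim e^JR_0/J$, which is a contradiction as $T\to\infty$. Hence $u\equiv0$.

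The main obstacle is the uniform positive lower bound for the flat block in the presence of arbitrary momentum: one must show that, after optimal Galilean subtraction, the localized covariance cannot degenerate. The degenerate case would be local mass concentrating as a modulated plane wave, which is incompatible with precompactness in $\dot H^1$ at bounded scale. I would first prove this by a compactness/contradiction argument on the normalized orbit, taking a sequence with covariance tending to $0$ and extracting a limit $f$ with $\nabla f=\ii\xi f$ on a ball. Unique continuation for the limit then forces $f=0$, which contradicts the nonzero orbit.
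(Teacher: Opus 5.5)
Your architecture matches the paper's: the log-averaged difference-variable functional, the flat block rewritten as $R^{-3}\int(M_{R,c}K_{R,c}-|P_{R,c}|^2)\dd c$, Galilean completion, the telescoping $O(J^{-1})$ shell error, the $O((JR_0^2)^{-1})$ bi-Laplacian term, and the choice of $R_0$, then $J$, then $T\to\infty$. However, the step you call the main obstacle is not proved by the argument you propose. A limit $f$ with $\nabla f=\ii\xi f$ on a ball only gives $f=Ce^{\ii x\cdot\xi}$ on that ball. Since $f$ is just a point of a compact subset of $\dot H^1$, not a solution of an elliptic equation, there is no unique continuation. Neither compactness nor nontriviality of the orbit prevents an orbit element from agreeing with a modulated constant on a fixed ball.

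Moreover, the lower bound must be uniform in $R\in[R_0,e^JR_0]$ after the normalization $R^{-3}\int\dd c$. If you only use centers in a set of fixed size near $x(t)$, the gain is $O((R_*/R)^3)$. After the $\dd R/(JR)$ average this is at most $O(J^{-1})$, so increasing $J$ no longer separates it from the $C_u/J$ remainder.

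The paper settles both points at once. For $R\ge8R_*$ and every $c\in B(x(t),R/4)$, a set of measure $\simeq R^3$, one has $\zeta_{R,c}\equiv1$ on the core $B(x(t),R_*)$, so $M_{R,c}\ge m_*$. Since $|v_{R,c}|=|u|$, the core bound $\int_{B(x(t),R_*)}|u|^6\ge\nu_*$ and Sobolev give $\|\nabla(\zeta_{R,c}v_{R,c})\|_2^2\ge\cS^{-2}\nu_*^{1/3}$ for every Galilean parameter, with no compactness in $\xi$ needed. Your compactness route can be rescued only on a large ball by a size argument, comparing $|C|^6|B|\le\|f\|_6^6$ with $|C|^6|B(0,R_*)|\ge\nu_*$; that is the same $L^6$ mechanism. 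You also need the local mass bound $M_{R,c}\ge m_*$, which kinetic-energy concentration does not give by itself. The paper gets it from $L^6$, hence $L^2_{\mathrm{loc}}$, convergence on the compact physical orbit.

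Your justification of local coercivity is also incorrect as stated. For general $\xi$, $\|\nabla(\zeta_{R,c}e^{-\ii x\cdot\xi}u)\|_2$ is not below $\|\nabla W\|_2$. Even for the optimal $\xi$, the potential term in the block carries the weight $\zeta_{R,c}^2$, whereas sharp Sobolev applied to $\zeta_{R,c}v_{R,c}$ controls only $\int\zeta_{R,c}^6|u|^6\le\int\zeta_{R,c}^2|u|^6$, which is the wrong direction. The paper keeps four powers global:
$\int\zeta_{R,c}^2|u|^6=\int|\zeta_{R,c}v_{R,c}|^2|u|^4\le\|\zeta_{R,c}v_{R,c}\|_6^2\|u\|_6^4\le(1-\delta_0)\|\nabla(\zeta_{R,c}v_{R,c})\|_2^2$,
using the global gap $\cS^2\|u\|_6^4\le1-\delta_0$, uniformly in $\xi$. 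It then uses $\int\zeta_{R,c}^2|\nabla v_{R,c}|^2=\|\nabla(\zeta_{R,c}v_{R,c})\|_2^2+\int\zeta_{R,c}\Delta\zeta_{R,c}|u|^2$. This is exactly why positivity is measured by $\|\nabla(\zeta_{R,c}v_{R,c})\|_2^2$: Sobolev bounds it below by the Galilean-invariant $\|\zeta_{R,c}u\|_6$.

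Finally, your weight is mis-specified: $\psi_R\approx|z|$ gives the non-flat classical Morawetz Hessian. The reorganization into local blocks needs the flat coefficient to be exactly $\Phi(z/R)$ with $\Phi(z)=\int\zeta(w)^2\zeta(w-z)^2\dd w$, so that $\Phi((x-y)/R)=R^{-3}\int\zeta_{R,c}(x)^2\zeta_{R,c}(y)^2\dd c$.
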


In particular, there is no nonzero global finite-mass bounded-scale almost-periodic solution; such a solution would of course scatter forward. No radial symmetry, zero total momentum, differentiability of $x(t)$, sublinear drift $|x(t)|=o(t)$, or finite first spatial moment is assumed.

Theorem~\ref{thm:finite-mass-rigidity} applies to the $\mathsf{FM}$ channel. If a minimal critical element lies in $L^2$ at one time, persistence of regularity and conservation of mass propagate finite mass throughout its lifespan, while one-sided normalization provides the lower scale bound. Together with the upper scale bound in $\mathsf{FM}$, all assumptions of the theorem are satisfied. The channels $\mathsf{FT}$ and $\mathsf{RC}$ are excluded by finite-endpoint mass decay and long-time frequency recursion, respectively. The main rigidity theorem and these complementary mechanisms therefore close the first three channels.

\begin{theorem}[Exclusion of three channels]\label{thm:closed-channels}
On the stable below-threshold branch \eqref{eq:threshold}, there is no nonzero one-sided normalized minimal critical element satisfying
\[
 \mathfrak T(u_c)\in\{\mathsf{FT},\mathsf{RC},\mathsf{FM}\}.
\]
Equivalently, any normalized minimal counterexample that survives the concentration--compactness reduction must satisfy
\[
 \mathfrak T(u_c)=\mathsf{RQ}.
\]
\end{theorem}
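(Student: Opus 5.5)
The plan is a case analysis over the three channels $\mathsf{FT}$, $\mathsf{RC}$, $\mathsf{FM}$. In each case we suppose $u_c$ is a nonzero one-sided normalized minimal critical element in that channel and show $u_c\equiv0$, which is a contradiction. Throughout we use the standing properties of $u_c$: almost periodicity modulo scaling and translation on $[0,T_+)$; the normalization $N(t)\ge1$ in the global case; the below-threshold coercivity $\|\nabla u\|_2^2-\|u\|_6^6\ge c_0\|\nabla u\|_2^2$; and the no-waste Duhamel formula $u_c(t)=\lim_{T\to T_+}\ii\int_t^T e^{\ii(t-s)\Delta}|u_c|^4u_c(s)\dd s$ (weakly in $\dot H^1$). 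The equivalent formulation then follows at once, since the four channels are exhaustive and mutually exclusive by construction: $\mathsf{FM}$ is carved out of $\{T_+=\infty,\ \mathsf K_+=\infty\}$ and $\mathsf{RQ}$ is its complement there.

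\textbf{$\mathsf{FM}$.} Persistence of regularity and mass conservation propagate $u_c(t_0)\in L^2$ to $u_c\in C([0,\infty);H^1)$, in both time directions from $t_0$. The normalization gives $N(t)\ge1=:N_-$, and the channel assumption gives $N_+:=\sup N<\infty$. The threshold condition holds for $u_c(0)$ because $u_c$ lies on the branch \eqref{eq:threshold} and energy is conserved. Hence Theorem~\ref{thm:finite-mass-rigidity} applies and yields $u_c\equiv0$.

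\textbf{$\mathsf{FT}$.} Here $T_+<\infty$, and almost periodicity forces $N(t)\to\infty$ as $t\to T_+$; otherwise the local theory would extend the solution. We first prove that $u_c(t)\in L^2$. Using the no-waste formula together with Strichartz and Sobolev, the $\dot H^1$-compactness of the orbit combined with $N(t)\to\infty$ gives $\|P_{\le M}u_c(t)\|_{L^2}\to0$ as $t\to T_+$ for each fixed $M$. A Bernstein bound on $P_{>M}u_c$ then gives finite mass. Next, let $\phi_R$ be a smooth cutoff to $B(0,R)$. The local mass $\int\phi_R|u_c(t)|^2$ has derivative bounded by $R^{-1}\|u_c\|_{L^2}\|\nabla u_c\|_{L^2}$, where we estimate the mass on the annulus via Hardy/Sobolev. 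Integrating up to $T_+$ and using that the solution concentrates at a point as $t\to T_+$, so its local mass tends to $0$, we obtain $\int\phi_R|u_c(t)|^2\lesssim (T_+-t)R^{-1}\cdot O(R)$. Letting $R\to\infty$ appropriately and applying conservation of mass gives $M(u_c)=0$, hence $u_c\equiv0$.

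\textbf{$\mathsf{RC}$.} This is the step I expect to be the main obstacle, since it requires long-time Strichartz control in $d=3$. The plan follows the frequency-recursion argument of Killip--Visan. Finiteness of $\mathsf K_+$ together with $N(t)\ge1$ forces $N(t)\to\infty$. We set $A(N):=\sup_{t\ge0}\|P_{\le N}u_c(t)\|_{\dot H^1}$; compactness gives $A(N)\to0$ as $N\to0$. Inserting the no-waste Duhamel formula and bilinear Strichartz estimates, we derive a recursion of the form
\[
A(N)\lesssim \eta\,A(N/\eta)+\text{terms controlled by } N\,\mathsf K_+(u_c),
\]
with a small constant $\eta$, using $\mathsf K_+<\infty$ to bound $\int_0^\infty\|P_{\le N}(|u|^4u)\|$. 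Iterating this recursion gives the regularity gain $u_c(t)\in\dot H^{-\varepsilon}$ uniformly in $t$. Interpolating with $\dot H^1$ shows $u_c\in L^2$, and, since $N(t)\to\infty$, also $\|u_c(t)\|_{L^2}\to0$ along $t\to\infty$. Mass conservation then gives $u_c\equiv0$. The delicate point is that $d=3$ has weaker dispersive decay, so the recursion must be closed using $\mathsf K_+<\infty$ rather than pure dispersion.
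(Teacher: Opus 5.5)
Your case analysis, the remark that the four channels are exhaustive and disjoint, and the $\mathsf{FM}$ case all agree with the paper. The one difference in $\mathsf{FM}$ is harmless. You propagate $H^1$ from $t_0$ back to $t=0$ by persistence on the whole lifespan. The paper instead applies Theorem~\ref{thm:finite-mass-rigidity} to the time-translated tail on $[t_0,\infty)$ and concludes by uniqueness.

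In $\mathsf{FT}$ you take a partly different, valid route. The paper uses no spatial localization. From the no-waste formula and Bernstein it gets $\|P_{\le M}u(t)\|_2\lesssim_u M(T_+-t)$ (Lemmas~\ref{lem:finite-band}--\ref{lem:finite-low}, with a weak-limit argument because $P_{\le M}u(t)\in L^2$ is not known a priori). It adds $\|P_{>M}u\|_2\le M^{-1}\|\nabla u\|_2$, optimizes $M\simeq(T_+-t)^{-1/2}$, and finishes with mass conservation. Your first step, done this way, already closes the case, so your localized-mass step is redundant. Conversely, the localized-mass (Kenig--Merle) argument alone suffices and needs no no-waste formula: the bound $\int\phi_R|u(t)|^2\lesssim T_+-t$ is uniform in $R$, so it already gives $u(t)\in L^2$. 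Two corrections here:
\begin{itemize}
\item The low-frequency decay comes from Bernstein applied to the Duhamel integral over the short interval $[t,T_+)$, not from $\dot H^1$-compactness and $N(t)\to\infty$. Those only make $\|\nabla P_{\le M}u\|_2$ small.
\item Vanishing of the mass in a fixed ball should not rest on ``concentration at a point'', since $x(t)$ need not converge. Split $B(0,R)$ into $B(x(t),\eta R)$, where H\"older gives $O(\eta^2R^2)$, and the rest, where H\"older, \eqref{eq:ap-space} and $N(t)\to\infty$ give $o(1)$.
\end{itemize}

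The genuine gap is in $\mathsf{RC}$. Your recursion, $A(N)\lesssim\eta A(N/\eta)+N\mathsf K_+(u_c)$ with $A(N)=\sup_t\|\nabla P_{\le N}u_c(t)\|_2$, cannot yield negative regularity or even finite mass. Indeed $A(N)=aN$ satisfies it for every $a>0$, and this profile corresponds to $\|P_Nu_c\|_2\simeq a$ on every low dyadic block, so $u_c\notin L^2$. Nor can $\mathsf K_+<\infty$ control the no-waste integral through fixed-time bounds. Bernstein gives only $\|P_{\le N}(|u_c|^4u_c)(s)\|_2\lesssim N\|u_c(s)\|_6^5$, which does not decay along a nonzero compact orbit, so its integral over $[t,\infty)$ diverges whatever $\mathsf K_+$ is. This is the essential difference from $\mathsf{FT}$.

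The missing ingredient is the Killip--Visan long-time Strichartz estimate. It is a bound for $\|\nabla P_{\le N}u\|_{L^2_tL^6_x}$ on unions of characteristic intervals that exploits dispersion across intervals, with growth in $N$ controlled by $\int N(t)^{-1}\dd t$. Combined with the no-waste formula, it gives finite mass and the superlinear bound $\|\nabla P_{\le N}u_c\|_{L^\infty_tL^2_x}\lesssim_u N^{3/2}$, i.e.\ \eqref{eq:absolute-low-mass}. The paper imports exactly this as Proposition~\ref{prop:LTS}. It first checks that the argument uses only Strichartz, Bernstein and H\"older estimates and the uniform $\dot H^1$ bound of Lemma~\ref{lem:coercivity}, never the defocusing sign. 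Your plan needs that check as well, since the source treats the defocusing equation. With that input, your endgame (finite mass, $N(t)\to\infty$, a three-piece frequency split, mass conservation) is exactly Proposition~\ref{prop:rapid-cascade-exclusion}.
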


Theorem~\ref{thm:closed-channels} is a rigorous rigidity reduction, not a prior exclusion of the residual channel. Concentration compactness also says that failure of below-threshold scattering produces a one-sided normalized minimal critical element, whereas any such nonzero element is itself an obstruction to scattering. Thus the full scattering problem is exactly equivalent to the nonexistence of $\mathsf{RQ}$.

\begin{corollary}[Residual-channel criterion]\label{cor:residual-channel}
The following are equivalent:
\begin{enumerate}[label=\textup{(\alph*)},leftmargin=2.4em]
 \item Every datum satisfying \eqref{eq:threshold} generates a unique global strong solution that scatters in both time directions.
 \item There is no nonzero one-sided normalized minimal critical element satisfying
 \[
  \mathfrak T(u_c)=\mathsf{RQ}.
 \]
\end{enumerate}
Consequently, full below-threshold scattering is equivalent to the nonexistence of the residual quasi-soliton channel $\mathsf{RQ}$.
\end{corollary}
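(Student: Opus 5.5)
The plan is to prove the two implications separately, each time reducing to Theorem~\ref{thm:closed-channels} and to the concentration--compactness reduction described before the classification. We also need one combinatorial fact: the four channels $\mathsf{FT}$, $\mathsf{RC}$, $\mathsf{FM}$, $\mathsf{RQ}$ exhaust all possibilities for a one-sided normalized minimal critical element $u_c$. This fact we verify directly from the definitions. Either $T_+<\infty$, which is $\mathsf{FT}$. Or $T_+=\infty$, and then either $\mathsf K_+(u_c)<\infty$, which is $\mathsf{RC}$, or $\mathsf K_+(u_c)=\infty$. In the last case, either $\sup_t N(t)<\infty$ and $u_c(t_0)\in L^2$ for some $t_0$, which is $\mathsf{FM}$, or the negation holds. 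That negation is literally the disjunction in \eqref{eq:intro-channel-RQ}, so $u_c$ lies in $\mathsf{RQ}$. Hence every such $u_c$ has a well-defined $\mathfrak T(u_c)$.

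For (a)$\Rightarrow$(b), we argue by contraposition. Let $u_c$ be a nonzero one-sided normalized minimal critical element with $\mathfrak T(u_c)=\mathsf{RQ}$. By construction its data satisfy \eqref{eq:threshold}; this is part of what the reduction produces. By definition it does not scatter forward. This contradicts (a). In fact the argument shows that (a) rules out every nonzero minimal critical element, whatever its channel.

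For (b)$\Rightarrow$(a), suppose (a) fails. Then some datum satisfying \eqref{eq:threshold} either fails to generate a global solution in one time direction or fails to scatter in one direction. Three preliminary reductions are needed.
\begin{enumerate}[label=\textup{(\roman*)},leftmargin=2.4em]
\item Uniqueness of strong solutions is part of the standard local theory in $C_t\dot H^1\cap L^{10}_{t,x,\mathrm{loc}}$.
\item By the time-reversal symmetry $u(t,x)\mapsto\overline{u(-t,x)}$, which preserves \eqref{eq:nls}, $E$, and $\|\nabla u\|_2$, we may assume the failure is forward.
\item Finite forward lifespan is treated together with non-scattering. By the blow-up criterion, either case means $\|u\|_{L^{10}_{t,x}([0,T_+)\times\R^3)}=\infty$.
\end{enumerate}
Now the concentration--compactness reduction recalled before the classification applies: profile decomposition, nonlinear stability, and Palais--Smale compactness at the minimal energy level below $E(W)$. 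It yields a nonzero one-sided normalized minimal critical element $u_c$ on the stable branch. By the exhaustiveness shown above, $u_c$ lies in one of the four channels. Theorem~\ref{thm:closed-channels} excludes $\mathsf{FT}$, $\mathsf{RC}$, and $\mathsf{FM}$, so $\mathfrak T(u_c)=\mathsf{RQ}$. This contradicts (b).

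The only delicate point is in the second implication. We must make sure that the reduction really produces an element in the precise normalized class to which Theorem~\ref{thm:closed-channels} applies. That means: one-sided almost periodicity on $[0,T_+)$, the normalization $N(t)\ge1$ in the global case, and the stable-branch condition. We also need this to hold for every type of failure of (a), including finite-time blow-up. We would handle this by citing the standard Kenig--Merle/Killip--Visan reduction verbatim, as the paper does. Everything else is formal bookkeeping.
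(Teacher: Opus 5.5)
Your proposal is correct and follows the paper's proof essentially verbatim. For (a)$\Rightarrow$(b), a nonzero $\mathsf{RQ}$ element is itself a below-threshold solution with infinite forward scattering size, and for (b)$\Rightarrow$(a) you use time reversal, Theorem~\ref{thm:minimal-element} with Proposition~\ref{prop:one-sided-selection}, the four-channel partition, and Theorem~\ref{thm:closed-channels}, exactly as the paper does; your explicit check that $\mathsf{RQ}$ is precisely the negation of $\mathsf{FM}$ within the branch $T_+=\infty$, $\mathsf K_+(u_c)=\infty$ is a step the paper only asserts.
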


\begin{remark}\label{rem:relation-low-speed}
The companion results \cite{ChungHan2026,ChungHanCylindrical2026} and Theorem~\ref{thm:finite-mass-rigidity} are complementary rather than nested. The fixed-center method of \cite{ChungHan2026} treats the general nonradial bounded-scale channel at several integrability levels $u\in L_t^\infty L_x^q$, $2\le q\le6$, but requires a $q$-dependent sequential low-speed condition; even at the finite-mass endpoint $q=2$, a sublinear drift condition remains. The cylindrical analysis in \cite{ChungHanCylindrical2026} exploits rotational symmetry to reduce the spatial motion to an axial center and closes two conditional entrances by shifted and finite-mass virial identities, the latter after axial zero-momentum normalization. By contrast, Theorem~\ref{thm:finite-mass-rigidity} is restricted to the finite-mass endpoint but removes all center-drift assumptions, cylindrical or radial symmetry, and zero-momentum normalization by using a difference-variable self-interaction Morawetz functional and subtracting the optimal local Galilean momentum. None of the three rigidity statements is used to prove either of the others.
\end{remark}

The remainder of the paper is organized as follows. Section~2 collects the local theory, concentration--compactness inputs, almost-periodic framework, coercivity estimates, and long-time Strichartz consequences used later. Section~3 excludes the finite-time channel by combining the no-waste representation with frequency-localized mass estimates. Section~4 rules out the rapid-cascade channel through scale divergence, negative regularity, and mass conservation. Section~5 constructs the logarithmically averaged kernels and records the local conservation laws underlying the interaction argument. Section~6 derives the exact self-interaction Morawetz identity and decomposes its time derivative into the principal coercive block and controllable errors. Section~7 removes the local momentum defect by the optimal ballwise Galilean transformation and proves the required local coercivity. Section~8 combines these ingredients with the local-mass budget and a low-regularity approximation to establish finite-mass rigidity. Section~9 assembles the preceding exclusions into the four-channel reduction and proves the residual-channel criterion. 

\section{Preliminaries and external results}

\subsection{Basic notation and notions of solution}

We use the standard notation for Lebesgue and Sobolev norms, with
$\|f\|_p:=\|f\|_{L^p(\R^3)}$ and $\|f\|_{\dot H^1}:=\|\nabla f\|_2$. For a Banach space $X$ and an interval $J$, set
\begin{align*}
 C_tX(J)&:=C(J;X),\\
 C_t\dot H_x^1(J\times\R^3)&:=C(J;\dot H^1(\R^3)),\\
 C_tH_x^1(J\times\R^3)&:=C(J;H^1(\R^3)).
\end{align*}
We write $B(x_0,R):=\{x\in\R^3:|x-x_0|<R\}$, $\one_E$ for the indicator of a set $E$, $C_c^\infty(\R^3)$ for the smooth compactly supported functions, and $\Sspace(\R^3)$ for the Schwartz class; a number $M\in2^{\mathbb Z}$ is called dyadic. For nonnegative quantities $A,B$, the notation $A\lesssim_\Lambda B$ means $A\le C_\Lambda B$, and $A\simeq_\Lambda B$ means both inequalities. The subscript $u$ indicates dependence only on the fixed solution and its below-threshold gap. We write $o_n(1)$ for a quantity tending to zero as $n\to\infty$, and $n\gg_\Lambda1$ when $n$ is sufficiently large with threshold depending only on $\Lambda$. For a subset $K$ of a Banach space $X$, set $\operatorname{dist}_X(f,K):=\inf_{g\in K}\|f-g\|_X$; the notation $K\Subset X$ means that the closure of $K$ is compact in $X$. Thus, for intervals, $J\Subset I$ means that $\overline J$ is a compact subset of $I$. Weak convergence, supports, the Kronecker delta, Einstein summation, and tensor products follow the standard conventions. A subscript on a differential operator indicates the variable in which it acts.

Our Fourier transform convention is
\[
 \widehat f(\xi):=(\F f)(\xi)
 :=\int_{\R^3}e^{-\ii x\cdot\xi}f(x)\dd x.
\]
Fix a radial function $\varphi\in C_c^\infty(\R^3)$ satisfying
\[
 0\le\varphi\le1,
 \qquad
 \varphi(\xi)=1\quad(|\xi|\le1),
 \qquad
 \varphi(\xi)=0\quad(|\xi|\ge2).
\]
For $M>0$, define the Littlewood--Paley operators by
\[
 \widehat{P_{\le M}f}(\xi):=\varphi(\xi/M)\widehat f(\xi),
 \qquad
 P_{>M}:=\operatorname{Id}-P_{\le M}.
\]
For dyadic $M$, set
\[
 P_M:=P_{\le M}-P_{\le M/2}.
\]
For a time interval $J\subset\R$ and $1\le q,r\le\infty$, use the mixed norm defined in the introduction. For $X\in\{\dot H^1(\R^3),H^1(\R^3)\}$, set
\[
 \|f\|_{L_t^\infty X(J)}:=\operatorname*{ess\,sup}_{t\in J}\|f(t)\|_X;
\]
we abbreviate these spaces by $L_t^\infty\dot H_x^1(J\times\R^3)$ and $L_t^\infty H_x^1(J\times\R^3)$, respectively. For a strong solution $u$ and a time interval $J$, define the critical scattering size
\[
 S_J(u):=\|u\|_{L_{t,x}^{10}(J\times\R^3)}.
\]

For $f\in L^2(\R^3)$, define the mass
\begin{equation}\label{eq:mass}
 \M(f):=\int_{\R^3}|f(x)|^2\dd x=\|f\|_2^2.
\end{equation}
If moreover $f\in H^1(\R^3)$, define the total momentum
\begin{equation}\label{eq:total-momentum}
 \mathcal P(f):=\Ima\int_{\R^3}\overline{f(x)}\,\nabla f(x)\dd x\in\R^3.
\end{equation}
The main proof does not assume $\mathcal P(u)=0$.

\begin{definition}[Strong solutions and scattering]\label{def:strong-scatter}
Let $I\subset\R$ be an interval. A function
\[
 u\in C_t\dot H_x^1(I\times\R^3)
 \cap L^{10}_{t,x,\mathrm{loc}}(I\times\R^3)
\]
is called a strong solution to \eqref{eq:nls} if, for all $t,t_0\in I$,
\[
 u(t)=e^{\ii(t-t_0)\Delta}u(t_0)
 +\ii\int_{t_0}^t e^{\ii(t-s)\Delta}|u(s)|^4u(s)\dd s.
\]
If $\sup I=+\infty$ and there exists $u_+\in\dot H^1$ such that
\[
 \lim_{t\to+\infty}
 \|u(t)-e^{\ii t\Delta}u_+\|_{\dot H^1}=0,
\]
then $u$ is said to scatter forward. Backward scattering is defined analogously.
\end{definition}
All later uses of ``strong solution'', ``forward scattering'', and ``backward scattering'' refer to Definition~\ref{def:strong-scatter}. If $u$ is an $H^1$ strong solution, then the mass \eqref{eq:mass}, momentum \eqref{eq:total-momentum}, and energy \eqref{eq:energy} are conserved. Implicit constants may change from line to line.

\subsection{Almost periodicity and critical channels}

\begin{definition}[Almost periodicity]\label{def:ap}
A maximal-lifespan strong solution $u:I\times\R^3\to\C$ is almost periodic modulo scaling and translation if there exist functions
\[
 N:I\to(0,\infty),
 \qquad
 x:I\to\R^3,
\]
such that the normalized orbit
\begin{equation}\label{eq:normalized-orbit}
 \mathcal K_u
 :=\left\{
 N(t)^{-1/2}u\left(t,x(t)+\frac{\cdot}{N(t)}\right):
 t\in I
 \right\}
\end{equation}
is precompact in $\dot H^1(\R^3)$. The functions $N(t)$ and $x(t)$ are called the frequency scale and spatial center, respectively.
\end{definition}

Definition~\ref{def:ap} is equivalent to the following spatial--frequency compactness property: for every $\eta>0$, there exists $C(\eta)>1$ such that, for every $t\in I$,
\begin{align}
 &\int_{|x-x(t)|\ge C(\eta)/N(t)}
 \bigl(|\nabla u(t,x)|^2+|u(t,x)|^6\bigr)\dd x
 \le\eta,
 \label{eq:ap-space}\\
 &\int_{|\xi|\le N(t)/C(\eta)}
 |\xi|^2|\widehat u(t,\xi)|^2\dd\xi
 +\int_{|\xi|\ge C(\eta)N(t)}
 |\xi|^2|\widehat u(t,\xi)|^2\dd\xi
 \le\eta.
 \label{eq:ap-frequency}
\end{align}
We use the normalized orbit \eqref{eq:normalized-orbit}, the spatial compactness estimate \eqref{eq:ap-space}, and the frequency compactness estimate \eqref{eq:ap-frequency}; no differentiability of $N(t)$ or $x(t)$ is required.

Set
\[
 \mathcal A
 :=\left\{f\in\dot H^1(\R^3):
 E(f)<E(W),\ \|\nabla f\|_2<\|\nabla W\|_2\right\}.
\]
For $0<E_*<E(W)$, let $u_f:I(f)\times\R^3\to\C$ denote the maximal-lifespan solution with datum $f\in\mathcal A$, and define the uniform scattering size
\begin{equation}\label{eq:uniform-scattering-size}
 \mathcal L(E_*)
 :=\sup\left\{
 S_{I(f)}(u_f):
 f\in\mathcal A,\ E(f)\le E_*
 \right\}\in[0,\infty].
\end{equation}
If below-threshold scattering fails, define the critical energy
\begin{equation}\label{eq:critical-energy}
 E_c
 :=\sup\left\{
 E_*\in(0,E(W)):\mathcal L(E_*)<\infty
 \right\}.
\end{equation}
Small-data scattering and the failure assumption give
\begin{equation}\label{eq:critical-energy-range}
 0<E_c<E(W),
 \qquad
 \mathcal L(E_*)<\infty\quad(0<E_*<E_c).
\end{equation}
A nonscattering solution of energy $E_c$ whose orbit is precompact modulo symmetries will be called a minimal critical element. For a maximal-lifespan strong solution $V:I(V)\times\R^3\to\C$, we abbreviate
\[
 S(V):=S_{I(V)}(V).
\]

For a global almost-periodic solution satisfying $N(t)\ge1$ and a time interval $J$, define
\[
 \mathsf K(J):=\int_JN(t)^{-1}\dd t,
 \qquad
 \mathsf K_+(u):=\mathsf K([0,\infty)).
\]
If $\mathsf K_+(u)<\infty$, we call the solution a rapid low-to-high frequency cascade; if $\mathsf K_+(u)=\infty$, we call it a quasi-soliton. This terminology concerns only the time--frequency integral and does not assume that $N(t)$ is bounded above. The four-channel decomposition in the introduction further divides the quasi-soliton branch into the bounded-scale finite-mass class $\mathsf{FM}$ and its complement $\mathsf{RQ}$.

\subsection{Below-threshold coercivity}

Define the well functional
\[
 K(f):=\|\nabla f\|_2^2-\|f\|_6^6,
 \qquad f\in\dot H^1(\R^3).
\]
The sharp Sobolev inequality is
\begin{equation}\label{eq:sharp-Sobolev}
 \|f\|_6\le\cS\|\nabla f\|_2,
 \qquad
 \cS^6=\|\nabla W\|_2^{-4}.
\end{equation}
The ground state satisfies the Pohozaev identities
\begin{equation}\label{eq:Pohozaev}
 \|W\|_6^6=\|\nabla W\|_2^2,
 \qquad
 E(W)=\frac13\|\nabla W\|_2^2.
\end{equation}
We first convert the sharp Sobolev inequality into the quantitative below-threshold coercivity used throughout the paper.

\begin{lemma}[Below-threshold coercivity]\label{lem:coercivity}
Fix $E_*<E(W)$. If $f\in\dot H^1(\R^3)$ satisfies
\[
 E(f)\le E_*,
 \qquad
 \|\nabla f\|_2<\|\nabla W\|_2,
\]
then there exist constants $\delta_0,c_0>0$, depending only on $E_*/E(W)$, such that
\begin{align}
 \|\nabla f\|_2^2
 &\le(1-\delta_0)\|\nabla W\|_2^2,
 \label{eq:gradient-gap}\\
 K(f)&\ge c_0\|\nabla f\|_2^2,
 \label{eq:K-positive}\\
 c_0\|\nabla f\|_2^2
 &\le E(f)\le\frac12\|\nabla f\|_2^2,
 \label{eq:E-equivalence}\\
 \cS^2\|f\|_6^4&\le1-\delta_0.
 \label{eq:L6-gap}
\end{align}
If $u$ is a strong solution satisfying \eqref{eq:threshold}, these estimates hold uniformly throughout its lifespan.
\end{lemma}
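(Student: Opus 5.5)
The plan is the standard Kenig--Merle energy-trapping argument, driven by the one-variable function obtained from the sharp Sobolev inequality \eqref{eq:sharp-Sobolev}. Write $y:=\|\nabla f\|_2^2$ and $y_W:=\|\nabla W\|_2^2$. By \eqref{eq:sharp-Sobolev}, $\|f\|_6^6\le\cS^6y^3=y^3/y_W^2$, so
\[
 E(f)\ge g(y):=\frac12y-\frac16\frac{y^3}{y_W^2}.
\]
The function $g$ increases on $[0,y_W]$ and reaches its maximum $g(y_W)=\frac13y_W=E(W)$ there, using \eqref{eq:Pohozaev}. Since $g(y)\le E(f)\le E_*<E(W)$ and $y<y_W$, strict monotonicity forces $y\le y_*$. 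Here $y_*<y_W$ is the unique root in $[0,y_W)$ of $g(y_*)=E_*$ (take $y_*=0$ if $E_*\le0$). After rescaling $s=y/y_W$, the equation becomes $\frac12s-\frac16s^3=\frac13E_*/E(W)$. Its root therefore depends only on $E_*/E(W)$, and setting $\delta_0:=1-y_*/y_W>0$ gives \eqref{eq:gradient-gap}.

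For \eqref{eq:K-positive}, I use the same Sobolev bound:
\[
 K(f)\ge y-\frac{y^3}{y_W^2}=y\Bigl(1-\frac{y^2}{y_W^2}\Bigr)\ge\bigl(1-(1-\delta_0)^2\bigr)y.
\]
For \eqref{eq:E-equivalence}, the upper bound $E(f)\le\frac12y$ is trivial. For the lower bound I write
\[
 E(f)=\frac13y+\frac16K(f)\ge\frac13y,
\]
since $K(f)\ge0$. So it is enough to take $c_0:=\min\{\frac13,\,1-(1-\delta_0)^2\}$, possibly shrunk further. For \eqref{eq:L6-gap}, I note $\cS^2\|f\|_6^4\le\cS^6\|\nabla f\|_2^4=y^2/y_W^2\le(1-\delta_0)^2\le1-\delta_0$. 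Every constant depends only on $E_*/E(W)$.

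For the final assertion about solutions, I set $E_*:=E(u_0)<E(W)$; energy is conserved by the strong solution. The one step needing care is that $\|\nabla u(t)\|_2<\|\nabla W\|_2$ persists for all $t$, and I expect this continuity argument to be the main obstacle, though it is mild. The map $t\mapsto\|\nabla u(t)\|_2^2$ is continuous because $u\in C(I;\dot H^1)$. If it ever reached $y_W$, then by the intermediate value theorem it would have to take a value in $(y_*,y_W)$ at some time. At that time $g(y)>E_*\ge E(u(t))$, which contradicts $E(u(t))\ge g(y)$. Hence $\|\nabla u(t)\|_2^2\le y_*$ on the whole lifespan, and the static estimates apply at every time with the same constants.
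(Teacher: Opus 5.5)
Your proposal is correct and follows essentially the same route as the paper. Sharp Sobolev plus the Pohozaev identities reduce everything to the increasing function $g$, giving the gradient gap; then $K(f)\ge(1-y^2)\|\nabla f\|_2^2$ in normalized form, the identity $E=\frac13\|\nabla f\|_2^2+\frac16K$, and a continuity argument with conserved energy for solutions. You only differ in working with the unnormalized variable and in making the choice of $\delta_0$ (via the root $y_*$) and the intermediate-value step explicit.
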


\begin{proof}
Set
\[
 y:=\frac{\|\nabla f\|_2^2}{\|\nabla W\|_2^2}\in[0,1).
\]
By \eqref{eq:sharp-Sobolev} and \eqref{eq:Pohozaev},
\begin{align}
 \|f\|_6^6
 &\le \cS^6\|\nabla f\|_2^6
 =\frac{\|\nabla f\|_2^6}{\|\nabla W\|_2^4}
 =y^3\|\nabla W\|_2^2,
 \label{eq:sob-y}\\
 E(f)
 &\ge \|\nabla W\|_2^2
 \left(\frac y2-\frac{y^3}{6}\right).
 \label{eq:energy-y}
\end{align}
The function
\[
 g(y):=\frac y2-\frac{y^3}{6}
\]
is strictly increasing on $[0,1]$, and \eqref{eq:Pohozaev} gives
\begin{equation}\label{eq:g-one}
 g(1)=\frac13=\frac{E(W)}{\|\nabla W\|_2^2}.
\end{equation}
Since $E(f)\le E_*<E(W)$, equations \eqref{eq:energy-y} and \eqref{eq:g-one} imply that $y\le1-\delta_0$ for some $\delta_0>0$, which proves \eqref{eq:gradient-gap}.

Equation \eqref{eq:sob-y} may also be written as
\[
 \|f\|_6^6\le y^2\|\nabla f\|_2^2.
\]
Thus
\[
 K(f)
 =\|\nabla f\|_2^2-\|f\|_6^6
 \ge(1-y^2)\|\nabla f\|_2^2
 \ge c_0\|\nabla f\|_2^2,
\]
which is \eqref{eq:K-positive}. The identity
\[
 E(f)=\frac13\|\nabla f\|_2^2+\frac16K(f)
\]
and $K(f)\ge0$ yield \eqref{eq:E-equivalence}. Finally,
\[
 \cS^2\|f\|_6^4
 \le \cS^6\|\nabla f\|_2^4
 =y^2
 \le(1-\delta_0)^2.
\]
After decreasing $\delta_0$, this gives \eqref{eq:L6-gap}.

For a strong solution, conservation of energy and continuity prevent the orbit from crossing $\|\nabla u\|_2=\|\nabla W\|_2$, so the estimates hold at every time.
\end{proof}

The coercivity immediately gives a uniform lower bound for the gradient along any nonzero below-threshold orbit.

\begin{corollary}[Gradient nondegeneracy]\label{cor:gradient-nondegenerate}
Let $u$ be a nonzero strong solution satisfying \eqref{eq:threshold}. Then there exists $a_u>0$ such that
\[
 \|\nabla u(t)\|_2^2\ge a_u
\]
for every $t$ in the lifespan.
\end{corollary}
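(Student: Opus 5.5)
The plan is to combine conservation of energy with the energy equivalence of Lemma~\ref{lem:coercivity}. By \eqref{eq:E-equivalence}, applied at every time $t$ in the lifespan (legitimate, since the lemma asserts that its estimates hold uniformly along any strong solution satisfying \eqref{eq:threshold}), we have
\[
 E(u(t))\le\frac12\|\nabla u(t)\|_2^2 .
\]
Energy is conserved for strong solutions, so $E(u(t))=E(u_0)$ for all $t$. This gives the bound $\|\nabla u(t)\|_2^2\ge 2E(u_0)$, and it remains to show that $E(u_0)>0$; we would then take $a_u:=2E(u_0)$.

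For positivity we again use \eqref{eq:E-equivalence}, now in the lower direction: $E(u_0)\ge c_0\|\nabla u_0\|_2^2$, where $c_0>0$ depends only on $E(u_0)/E(W)$ (taking $E_*:=\max\{E(u_0),0\}<E(W)$). Since $u$ is nonzero and strong solutions with zero data vanish identically by uniqueness, we have $u_0\ne0$ in $\dot H^1$. Hence $\|\nabla u_0\|_2>0$ and $E(u_0)>0$. In fact one may equally take $a_u:=c_0\|\nabla u_0\|_2^2$ directly, since \eqref{eq:K-positive}/\eqref{eq:E-equivalence} hold at every time and energy is constant.

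The only point needing care is the justification of conservation of energy for $\dot H^1$ strong solutions, together with the fact that the orbit stays on the branch $\|\nabla u\|_2<\|\nabla W\|_2$. Both are already recorded in the paper (the last paragraph of the proof of Lemma~\ref{lem:coercivity}), so I expect no genuine obstacle. The nonvanishing of $u_0$ follows from uniqueness in the local theory.
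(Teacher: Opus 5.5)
Your proposal is correct and follows the paper's proof essentially verbatim. The upper bound in \eqref{eq:E-equivalence} together with energy conservation gives $\|\nabla u(t)\|_2^2\ge 2E(u_0)$. The lower bound in \eqref{eq:E-equivalence}, combined with $u_0\neq0$ (which follows from uniqueness), forces $E(u_0)>0$, so $a_u:=2E(u_0)$ works.
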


\begin{proof}
By \eqref{eq:E-equivalence} and conservation of energy,
\[
 \|\nabla u(t)\|_2^2\ge2E(u_0)>0.
\]
If $E(u_0)=0$, then \eqref{eq:E-equivalence} forces $u_0=0$, contradicting nontriviality. Thus one may take $a_u:=2E(u_0)$.
\end{proof}

\subsection{Minimal elements and one-sided normalization}

To make the logical role and range of the external inputs explicit, we formulate the local theory, concentration--compactness statements, and long-time estimates used below as numbered results.

We begin with local well-posedness, stability, and persistence of regularity.

\begin{proposition}[Local theory and stability]\label{prop:local-theory}
For every $u_0\in\dot H^1(\R^3)$, there exists a unique maximal-lifespan strong solution
\[
 u:I_{\max}=(T_-,T_+)\longrightarrow\dot H^1(\R^3),
 \qquad u(0)=u_0,
\]
and the energy is conserved. Moreover:
\begin{enumerate}[label=\textup{(\roman*)},leftmargin=2.4em]
 \item If $T_+<\infty$, then for every $t_0\in I_{\max}$,
 \[
  S_{[t_0,T_+)}(u)=\infty.
 \]
 If $T_+=\infty$ and $S_{[t_0,\infty)}(u)<\infty$, then $u$ scatters forward; the analogous backward statement holds. There exists an absolute constant $\eta_0>0$ such that, whenever $\|u_0\|_{\dot H^1}\le\eta_0$,
 \[
  I_{\max}=\R,
  \qquad S_{\R}(u)\lesssim\|u_0\|_{\dot H^1}.
 \]
 \item If $u_0\in H^1(\R^3)$, then
 \begin{equation}\label{eq:H1-persistence-local-theory}
  u\in C_tH_x^1(I_{\max}\times\R^3),
  \qquad
  \M(u(t))=\M(u_0)\quad(t\in I_{\max}).
 \end{equation}
 More generally, if $u_{0,n}\to u_0$ in $H^1$ and $J\Subset I_{\max}$, then for all sufficiently large $n$, the solution $u_n$ with datum $u_{0,n}$ exists on $J$ and
 \begin{equation}\label{eq:H1-continuous-dependence}
  u_n\longrightarrow u
  \quad\text{in }C_tH_x^1(J\times\R^3)
  \cap L_{t,x}^{10}(J\times\R^3).
 \end{equation}
 \item Let $J\subset\R$ be an interval, $t_0\in J$, and $E,L>0$. Suppose an approximate solution $\widetilde u$ and an error $e$ satisfy
 \[
  (\ii\partial_t+\Delta)\widetilde u+|\widetilde u|^4\widetilde u=e,
  \qquad
  \|\widetilde u\|_{L_t^\infty\dot H_x^1(J)}\le E,
  \qquad
  S_J(\widetilde u)\le L.
 \]
 For every $E'>0$, there exists $\varepsilon_*(E,E',L)>0$ such that, if $0<\varepsilon\le\varepsilon_*(E,E',L)$ and
 \begin{align*}
  \|u_0-\widetilde u(t_0)\|_{\dot H^1}&\le E',\\
  \left\|e^{\ii(t-t_0)\Delta}
  (u_0-\widetilde u(t_0))\right\|_{L_{t,x}^{10}(J\times\R^3)}
  +\|\nabla e\|_{L_t^2L_x^{6/5}(J\times\R^3)}&\le\varepsilon,
 \end{align*}
 then the exact solution with $u(t_0)=u_0$ exists on $J$, and for some $c=c(3)>0$,
 \begin{align}
  S_J(u)+\|\nabla u\|_{L_t^2L_x^6(J\times\R^3)}
  &\le C(E,E',L),
  \label{eq:stability-bound}\\
  \|u-\widetilde u\|_{L_{t,x}^{10}(J\times\R^3)}
  +\|\nabla(u-\widetilde u)\|_{L_t^2L_x^6(J\times\R^3)}
  &\le C(E,E',L)\varepsilon^c.
  \label{eq:stability-closeness}
 \end{align}
\end{enumerate}
\end{proposition}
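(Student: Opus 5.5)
The plan is to treat the three parts of Proposition~\ref{prop:local-theory} by the standard Strichartz fixed-point machinery for the quintic problem in $\R^3$, in the form of Cazenave--Weissler, Colliander--Keel--Staffilani--Takaoka--Tao \cite{CKSTT2008} and Tao--Visan \cite{TaoVisan2005}. I would argue mostly by citation and only indicate the estimates. The working space on an interval $J$ is
\[
 X(J):=\{v:\ \nabla v\in L_t^{10}L_x^{30/13}(J\times\R^3)\},
\]
with $L_t^{10}L_x^{30/13}$ an admissible Strichartz pair. Sobolev embedding gives $\|v\|_{L^{10}_{t,x}}\lesssim\|\nabla v\|_{L_t^{10}L_x^{30/13}}$. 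The key nonlinear estimate comes from the Leibniz rule and H\"older:
\[
 \|\nabla(|u|^4u)\|_{L_t^2L_x^{6/5}}\lesssim\|u\|_{L^{10}_{t,x}}^4\|\nabla u\|_{L_t^{10}L_x^{30/13}},
\]
together with the analogous difference estimate, which holds because $|u|^4u$ is a smooth quintic polynomial in $(u,\bar u)$.

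For existence and uniqueness, I choose $J\ni0$ so small that $\|e^{\ii t\Delta}u_0\|_{L^{10}_{t,x}(J)}\le\eta$. This is possible by dominated convergence, since the global free norm is finite. A contraction in a ball of $X(J)$ then produces a solution, and unconditional uniqueness within strong solutions follows from the same difference estimate on short intervals. Gluing gives the maximal solution. Energy conservation follows by approximating with smooth data and passing to the limit using continuous dependence.

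For the blow-up criterion in (i), suppose $T_+<\infty$ but $S_{[t_0,T_+)}(u)<\infty$. Subdividing $[t_0,T_+)$ into finitely many intervals of small $L^{10}$ norm bounds $\nabla u$ in all Strichartz norms. The Duhamel formula then shows that $u(t)$ converges in $\dot H^1$ as $t\to T_+$, so the solution can be extended, which is a contradiction. The same Cauchy argument, run with $e^{-\ii t\Delta}u(t)$, gives forward scattering when $T_+=\infty$. The small-data statement is the contraction on $J=\R$, using $\|e^{\ii t\Delta}u_0\|_{L^{10}_{t,x}}\lesssim\|u_0\|_{\dot H^1}$.

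For (ii), I rerun the iteration for $u$ itself in $L_t^\infty L^2\cap L_t^{10}L_x^{30/13}$, using
\[
 \||u|^4u\|_{L_t^2L_x^{6/5}}\lesssim\|u\|_{L^{10}_{t,x}}^4\|u\|_{L_t^{10}L_x^{30/13}}.
\]
This estimate is linear in the $L^2$-level quantity, so on every $J\Subset I_{\max}$ it is controlled by subdividing $J$ into pieces where $S$ is small. Hence the $H^1$ regularity persists on all of $I_{\max}$. Mass conservation follows from the identity $\partial_t|u|^2=-2\nabla\cdot\Ima(\bar u\nabla u)$, verified for smooth data and extended by density. The continuous dependence \eqref{eq:H1-continuous-dependence} is the Lipschitz bound of the contraction on each small subinterval, iterated finitely many times across $J$.

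Part (iii), long-time stability, is the main obstacle, and I would follow \cite{CKSTT2008,TaoVisan2005}. The first step is short-time perturbation: on an interval where $\|\widetilde u\|_{L^{10}_{t,x}}\le\delta$, write $w:=u-\widetilde u$. Then $w$ solves
\[
 (\ii\partial_t+\Delta)w=-\bigl(|\widetilde u+w|^4(\widetilde u+w)-|\widetilde u|^4\widetilde u\bigr)-e.
\]
A continuity argument in $\nabla w\in L_t^2L_x^6\cap L_t^{10}L_x^{30/13}$ gives $\|w\|\lesssim\varepsilon$. This works because every term of the expansion contains either a small factor of $\widetilde u$ or at least two factors of $w$. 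In three dimensions the quintic nonlinearity is smooth, so no fractional chain rule is needed; this is why the closeness exponent $c$ is in fact $1$ here.

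The second step is to split $J$ into $K=K(L,\delta)$ intervals with $\|\widetilde u\|_{L^{10}_{t,x}}\le\delta$. I first upgrade to $\|\nabla\widetilde u\|_{L_t^{10}L_x^{30/13}(J)}\le C(E,L)$ using Strichartz and the smallness of $e$. I then iterate the short-time result, tracking the free evolution of the accumulated error at each new initial time. The required inequality is $C^K\varepsilon\le\varepsilon_1$, which fixes $\varepsilon_*(E,E',L)$. The delicate point is that $E'$ is not small: the $\dot H^1$ difference enters only through its free evolution in $L^{10}_{t,x}$ and through Strichartz constants. So I must make sure the large $E'$ appears only in the constants and never in a quantity that needs to be small. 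This yields \eqref{eq:stability-bound} and \eqref{eq:stability-closeness}.
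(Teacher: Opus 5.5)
Your sketch of existence, uniqueness, the blow-up and scattering criteria, small data, persistence of $H^1$ regularity, mass conservation, and the bound \eqref{eq:stability-bound} is the standard Strichartz argument. The paper invokes the same argument purely by citation (Killip--Visan, Theorem~3.4, Corollaries~3.5 and~3.9, Theorem~3.8). You use $|u|^4u$ only through absolute values, so your estimates are sign-blind, which is the only remark the paper adds.

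The gap is in the short-time perturbation step of (iii). In the Duhamel formula for $w=u-\widetilde u$, consider the linear term $e^{\ii(t-t_0)\Delta}w(t_0)$. In the norms $\nabla w\in L_t^2L_x^6\cap L_t^{10}L_x^{30/13}$, where you run the continuity argument, this term is controlled only by $\|\nabla w(t_0)\|_2\le E'$, which is not small. The hypothesis makes only the undifferentiated norm $\|e^{\ii(t-t_0)\Delta}w(t_0)\|_{L^{10}_{t,x}}$ small. So ``$\|w\|\lesssim\varepsilon$'' does not follow, and the same defect recurs at every restart time of your iteration. You flag this (``$E'$ is not small'') but supply no mechanism. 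What you have written is the Colliander--Keel--Staffilani--Takaoka--Tao short-time lemma, which assumes $\|\nabla e^{\ii(t-t_0)\Delta}w(t_0)\|_{L_t^{10}L_x^{30/13}}\le\varepsilon$. Only under that stronger hypothesis is your claim $c=1$ justified.

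Moreover, no mechanism can give the gradient half of \eqref{eq:stability-closeness} under the stated hypotheses. Take $\widetilde u\equiv0$, $e\equiv0$, $J=[0,1]$, $t_0=0$. Fix $0\ne\phi\in\Sspace(\R^3)$ with $\|\phi\|_2$ small, and set $u_{0,n}(x):=|\xi_n|^{-1}e^{\ii x\cdot\xi_n}\phi(x)$ with $|\xi_n|\to\infty$.
Galilean invariance of the free flow gives $\varepsilon_n:=\|e^{\ii t\Delta}u_{0,n}\|_{L^{10}_{t,x}(J\times\R^3)}=|\xi_n|^{-1}\|e^{\ii t\Delta}\phi\|_{L^{10}_{t,x}(J\times\R^3)}\to0$. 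At the same time, $\|u_{0,n}\|_{\dot H^1}\to\|\phi\|_2$ and $\|\nabla e^{\ii t\Delta}u_{0,n}\|_{L_t^2L_x^6(J\times\R^3)}\to\|e^{\ii t\Delta}\phi\|_{L_t^2L_x^6(J\times\R^3)}>0$.
The small-data bootstrap gives $S_J(u_n)\le2\varepsilon_n$, hence $\|\nabla(u_n-e^{\ii t\Delta}u_{0,n})\|_{L_t^2L_x^6(J\times\R^3)}=O(\varepsilon_n^4)$. Therefore $\|\nabla(u_n-\widetilde u)\|_{L_t^2L_x^6(J\times\R^3)}$ stays bounded below while $\varepsilon_n\to0$.

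What is provable is $\|u-\widetilde u\|_{L^{10}_{t,x}}\le C(E,E',L)\varepsilon^c$, together with $\|\nabla(u-\widetilde u)\|_{L_t^2L_x^6}\le C(E,E',L)E'$. The $L^{10}_{t,x}$ bound comes from a bootstrap in a norm below the $\dot H^1$ level that actually sees the $L^{10}_{t,x}$ smallness, typically at the cost of $c<1$. This matches the cited stability theorem, where the derivative difference is bounded only by a multiple of $E'$.

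The paper's citation-only proof passes over the same point. Its later applications use only \eqref{eq:stability-bound}, the $L^{10}_{t,x}$ closeness, or cases with $E'\to0$. Still, the conclusion your proposal claims to derive is false as stated.
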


\begin{proof}
We explain the sources of Proposition~\ref{prop:local-theory}, its applicability under the focusing sign, and how the persistence and continuous-dependence statements used here follow from the standard local theory. The Strichartz estimates are due to Keel and Tao \cite{KeelTao1998}. Maximal-lifespan solutions, the blow-up criterion, scattering criterion, small-data scattering, and unconditional uniqueness are given in \cite[Theorem~3.4 and Corollaries~3.5 and~3.9]{KillipVisan2013}; energy-critical stability is \cite[Theorem~3.8]{KillipVisan2013}. See also \cite{Cazenave2003,KenigMerle2006}. These results are proved for the unified notation
\[
 (\ii\partial_t+\Delta)u=F(u),
 \qquad
 F(u)=\mu|u|^4u,
 \qquad \mu\in\{-1,+1\}.
\]
The present equation corresponds to $\mu=-1$. The local contraction and stability arguments use only
\begin{align}
 |F(z)|&=|z|^5,
 \label{eq:nonlinearity-sign-free-1}\\
 |\nabla F(u)|&\lesssim |u|^4|\nabla u|,\notag\\
 |\nabla(F(u)-F(v))|
 &\lesssim (|u|^4+|v|^4)|\nabla(u-v)|
 \notag\\
 &\quad+(|u|^3+|v|^3)(|\nabla u|+|\nabla v|)|u-v|,
 \label{eq:nonlinearity-sign-free-3}
\end{align}
so the constants are independent of $\mu$. Substitution of \eqref{eq:nonlinearity-sign-free-1}--\eqref{eq:nonlinearity-sign-free-3} into the Strichartz and continuity arguments yields \eqref{eq:stability-bound}--\eqref{eq:stability-closeness}. Persistence of $H^1$ regularity and continuous dependence follow by applying the same local iteration simultaneously to $u$ and $\nabla u$. Applying local mass conservation to smooth approximating solutions and passing to the $C_tH_x^1$ limit gives \eqref{eq:H1-persistence-local-theory}--\eqref{eq:H1-continuous-dependence}.
\end{proof}

On an almost-periodic orbit, the local theory also yields local constancy of the scale and a decomposition into characteristic intervals.

\begin{lemma}[Local constancy]\label{lem:local-constancy}
Let $u:I\times\R^3\to\C$ be a nonzero maximal-lifespan almost-periodic strong solution. There exist $\delta_u,c_u,C_u>0$ such that, for every $t_0\in I$,
\begin{align}
 \bigl[t_0-\delta_uN(t_0)^{-2},
 t_0+\delta_uN(t_0)^{-2}\bigr]
 &\subset I,
 \label{eq:local-lifespan-window}\\
 c_uN(t_0)\le N(t)\le C_uN(t_0)
 &\qquad\bigl(|t-t_0|\le\delta_uN(t_0)^{-2}\bigr).
 \label{eq:local-constancy}
\end{align}
After replacing $N$ by a comparable scale function, the time axis can be partitioned into consecutive characteristic intervals $J_k$, and numbers $N_k>0$ can be chosen such that
\begin{equation}\label{eq:characteristic-interval}
 c_uN_k^{-2}\le |J_k|\le C_uN_k^{-2},
 \qquad
 c_uN_k\le N(t)\le C_uN_k
 \quad(t\in J_k).
\end{equation}
This replacement does not change whether $\int_JN(t)^{-1}\dd t$ is finite. If $T_+:=\sup I<\infty$, then $N(t)\to\infty$ as $t\uparrow T_+$.
\end{lemma}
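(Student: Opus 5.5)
The lemma follows from the precompactness of $\mathcal K_u$ combined with the local theory of Proposition~\ref{prop:local-theory}; this is the standard argument of Killip--Visan. Compactness and scaling give the uniform lifespan window and the two-sided comparability of $N(t)$. Gluing short windows produces the characteristic intervals. The finite-endpoint statement then follows by contradiction with the uniform window.

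\textbf{Step 1: uniform lifespan window.} We first prove \eqref{eq:local-lifespan-window}. Let $\overline{\mathcal K_u}$ be the compact closure of the normalized orbit. For each $f\in\overline{\mathcal K_u}$ there is $\delta_f>0$ such that
\[
 \|e^{\ii t\Delta}f\|_{L^{10}_{t,x}([-\delta_f,\delta_f]\times\R^3)}
\]
is smaller than the small-data threshold. Indeed, this norm tends to $0$ as $\delta_f\to0$ by dominated convergence, since the global Strichartz norm of $e^{\ii t\Delta}f$ is finite. The map $f\mapsto e^{\ii t\Delta}f$ is continuous from $\dot H^1$ to $L^{10}_{t,x}$. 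Hence a finite subcover of $\overline{\mathcal K_u}$ yields one $\delta_u$ working for every element. The local theory, in the form of Proposition~\ref{prop:local-theory}(iii) with $\widetilde u$ the free evolution, then gives that the solution with datum $f$ exists on $[-\delta_u,\delta_u]$, with uniformly bounded $S$-norm. Now undo the scaling $\lambda=N(t_0)$ and the translation by $x(t_0)$, which preserve the equation. The solution $u$ then exists on $[t_0-\delta_uN(t_0)^{-2},t_0+\delta_uN(t_0)^{-2}]$, and by uniqueness this window lies in $I$.

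\textbf{Step 2: comparability of the scale.} We argue \eqref{eq:local-constancy} by contradiction. Suppose there are $t_n,t_n'$ with $|t_n'-t_n|\le\delta_uN(t_n)^{-2}$ and $N(t_n')/N(t_n)\to0$ or $\to\infty$. Rescale about $(t_n,x(t_n))$ with parameter $N(t_n)$. Passing to a subsequence, the rescaled data converge in $\dot H^1$ to some $f$, and the normalized times $s_n:=N(t_n)^2(t_n'-t_n)$ converge to some $s\in[-\delta_u,\delta_u]$. By continuous dependence (the stability statement), the rescaled solutions converge in $C_t\dot H^1_x$ on $[-\delta_u,\delta_u]$. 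Hence the rescaled profiles at $s_n$ converge in $\dot H^1$ to $v(s)$, where $v$ solves \eqref{eq:nls} with $v(0)=f$. Since $u\neq0$, Corollary~\ref{cor:gradient-nondegenerate} gives $\|\nabla v(s)\|_2^2\ge a_u>0$, so $v(s)\neq0$. On the other hand, the rescaled solution at time $s_n$, renormalized by the ratio $N(t_n')/N(t_n)$ and a translation, lies in $\mathcal K_u$. A sequence in a precompact set of nonzero functions (the gradient bound persists in the closure) cannot converge to a nonzero limit after a scaling that degenerates to $0$ or $\infty$: it would converge weakly to $0$. This contradiction gives constants $c_u,C_u$.

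\textbf{Step 3: characteristic intervals.} Build these greedily. Set $\tau_0=0$ and $\tau_{k+1}:=\tau_k+\delta_uN(\tau_k)^{-2}$, and similarly backward. Let $J_k:=[\tau_k,\tau_{k+1})$ and $N_k:=N(\tau_k)$. Step 2 yields \eqref{eq:characteristic-interval}. The sequence exhausts $I$: if $\tau_k$ increased to a limit $\tau_*$ inside $I$, then $N(\tau_k)\to\infty$, which is impossible because Step 2 makes $N$ locally bounded near $\tau_*$. We then replace $N$ by the step function equal to $N_k$ on $J_k$. This replacement is comparable to $N$, so the finiteness of $\int N^{-1}$ is unchanged. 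Finally, suppose $T_+<\infty$ but $N(t_n)\le A$ along some $t_n\uparrow T_+$. Step 1 then gives existence up to $t_n+\delta_uA^{-2}>T_+$, contradicting maximality. Hence $N(t)\to\infty$ as $t\uparrow T_+$.

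The main obstacle is Step 2. The delicate points are the uniform continuous dependence on compact sets in the energy-critical setting, and the observation that nonvanishing of the limit forbids degenerate rescaling. Corollary~\ref{cor:gradient-nondegenerate} handles the latter only below threshold. For a general nonzero $u$, we would instead use that $0\notin\overline{\mathcal K_u}$, which holds by energy conservation whenever $E\neq0$, or otherwise by the conservation of the $\dot H^1$ norm being bounded below along the compact flow.
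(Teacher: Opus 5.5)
Your proof is correct. Steps~1 and~2 and the finite-endpoint claim follow the paper's route: uniform local existence on the compact closure of $\mathcal K_u$, then rescaling back together with uniqueness and maximality to get the window. The paper compresses Step~2 into the remark that on the compact set $\mathcal K'$ any two admissible scales differ by a fixed factor. Your weak-convergence argument, that a degenerate rescaling cannot turn a sequence converging to a nonzero limit into one staying in a precompact set, is exactly what justifies that remark.

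Where you genuinely differ is the construction of the characteristic intervals.
\begin{itemize}
\item The paper cuts the time axis so that $S_{J_k}(u)=\eta_{\mathrm{ch}}$, merges endpoint remainders, and reads off $N_k^2|J_k|\simeq_u1$ from uniform local theory. This implicitly needs two-sided control of the scattering size on windows of length $\sim N(t_0)^{-2}$, in particular a uniform lower bound, which again comes from compactness and nonvanishing.
\item You step greedily by $\tau_{k+1}=\tau_k+\delta_uN(\tau_k)^{-2}$. Comparability on each $J_k$ comes directly from Step~2, and exhaustion of the lifespan from the non-accumulation argument.
\end{itemize}
Your version is more elementary: it gives $|J_k|=\delta_uN_k^{-2}$ exactly and needs no scattering-size bookkeeping. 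The paper's version builds in the normalization $S_{J_k}(u)=\eta_{\mathrm{ch}}$, which is convenient for long-time Strichartz recursions. The only later use inside this paper, the scale divergence in Lemma~\ref{lem:N-to-infty}, needs just the length and comparability bounds your intervals provide.

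One small correction to your closing remark. The $\dot H^1$ norm is not conserved, so for a general nonzero solution the reason $0\notin\overline{\mathcal K_u}$ is small-data theory, not conservation. If $\|u(t_n)\|_{\dot H^1}\to0$, then uniqueness and small-data theory give $\sup_t\|u(t)\|_{\dot H^1}\lesssim\|u(t_n)\|_{\dot H^1}\to0$, so $u\equiv0$. On the below-threshold branch where the lemma is actually applied, Corollary~\ref{cor:gradient-nondegenerate} suffices, as you say.
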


\begin{proof}
We explain the sources of Lemma~\ref{lem:local-constancy}, the normalization in the present notation, and the characteristic-interval and finite-endpoint conclusions. Local constancy is \cite[Lemma~1.5]{KillipVisan2012}; the general energy-critical formulation is \cite[Lemma~5.18]{KillipVisan2013}. Fix $t_0\in I$ and normalize by
\begin{equation}\label{eq:local-normalized-solution}
 v_{t_0}(s,y)
 :=N(t_0)^{-1/2}
 u\left(t_0+\frac{s}{N(t_0)^2},
 x(t_0)+\frac{y}{N(t_0)}\right).
\end{equation}
The initial-data set
\[
 \{v_{t_0}(0):t_0\in I\}=\mathcal K_u
\]
is precompact in $\dot H^1$. The local theory has a uniform existence time and stability radius on compact sets. Thus there exist $\delta_u>0$ and a fixed precompact set $\mathcal K'\subset\dot H^1$ such that the solution $V_{t_0}$ with datum $v_{t_0}(0)$ satisfies
\[
 V_{t_0}\in C([-\delta_u,\delta_u];\dot H^1(\R^3)),
 \qquad
 \{V_{t_0}(s):t_0\in I,\ |s|\le\delta_u\}\subset\mathcal K'.
\]
Uniqueness in the critical class identifies $V_{t_0}$ with the rescaled solution in \eqref{eq:local-normalized-solution}. Maximality then gives
\[
 [-\delta_u,\delta_u]\subset N(t_0)^2(I-t_0),
\]
which is \eqref{eq:local-lifespan-window}. On the precompact set $\mathcal K'$, any two admissible almost-periodic scales differ by at most a fixed factor. Hence, for $|s|\le\delta_u$,
\[
 c_u\le\frac{N(t_0+s/N(t_0)^2)}{N(t_0)}\le C_u,
\]
and scaling back gives \eqref{eq:local-constancy}.

Fix a sufficiently small constant $\eta_{\mathrm{ch}}>0$, determined by the compact orbit and local theory, and partition the interior intervals by
\[
 S_{J_k}(u)=\eta_{\mathrm{ch}},
\]
merging endpoint remainders in the usual way. By \eqref{eq:local-normalized-solution}, uniform local theory on compact sets, and scale invariance of the scattering size, if $t_k\in J_k$ and $N_k:=N(t_k)$, then
\[
 0<c_u\le N_k^2|J_k|\le C_u<\infty,
 \qquad
 N(t)\simeq_uN_k\quad(t\in J_k).
\]
This is the characteristic-interval construction following \cite[Lemma~1.7]{KillipVisan2012}. Replacing the scale on $J_k$ by $N_k$ changes it only by fixed factors and yields \eqref{eq:characteristic-interval}.

If $T_+<\infty$, the local interval in \eqref{eq:local-lifespan-window} cannot cross the maximal-lifespan endpoint. Hence
\[
 t+\delta_uN(t)^{-2}\le T_+,
\]
so
\[
 N(t)\ge\delta_u^{1/2}(T_+-t)^{-1/2}.
\]
Thus $N(t)\to\infty$ as $t\uparrow T_+$. See also \cite[Corollary~1.6]{KillipVisan2012} and \cite[Corollary~5.19]{KillipVisan2013}.
\end{proof}

Minimal nonscattering solutions also satisfy a Duhamel representation with no free linear remainder.

\begin{proposition}[No-waste formula]\label{prop:no-waste}
Let $u:I\times\R^3\to\C$ be a maximal-lifespan almost-periodic strong solution, and set $T_+:=\sup I$. Assume either $T_+<\infty$, or $T_+=\infty$ and $u$ does not scatter forward. Then for every $t<T_+$,
\begin{equation}\label{eq:no-waste}
 u(t)
 =-\ii\,w\!\!\lim_{T\uparrow T_+}
 \int_t^T e^{\ii(t-s)\Delta}|u(s)|^4u(s)\dd s
 \quad\text{in }\dot H^1,
\end{equation}
where, when $T_+=\infty$, the limit is understood as $T\to\infty$. In particular, in the finite-endpoint case,
\begin{equation}\label{eq:no-waste-finite}
 u(t)
 =-\ii\,w\!\!\lim_{T\uparrow T_+}
 \int_t^T e^{\ii(t-s)\Delta}|u(s)|^4u(s)\dd s
 \quad\text{in }\dot H^1.
\end{equation}
\end{proposition}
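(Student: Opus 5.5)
The proof follows the standard Kenig--Merle/Killip--Visan no-waste argument. We work in $\dot H^1$, where the weak limit is well defined. Fix $t<T_+$ and write the Duhamel formula between $t$ and $T\in(t,T_+)$ in the backward form
\[
 u(t)=e^{\ii(t-T)\Delta}u(T)-\ii\int_t^T e^{\ii(t-s)\Delta}|u(s)|^4u(s)\dd s .
\]
This form follows from Definition~\ref{def:strong-scatter} with the roles of $t$ and $t_0$ exchanged. It therefore suffices to show that
\[
 e^{\ii(t-T)\Delta}u(T)\rightharpoonup0 \quad\text{weakly in }\dot H^1 \text{ as } T\uparrow T_+ .
\]
Since $\|u(T)\|_{\dot H^1}$ is bounded along the orbit (precompactness of $\mathcal K_u$), it is enough to test against $\phi$ in a dense class, say $\phi\in C_c^\infty$ with $\widehat\phi$ supported away from $0$ and $\infty$. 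So we must prove
\[
 \langle u(T),e^{\ii(T-t)\Delta}\phi\rangle_{\dot H^1}\to0 .
\]

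\textbf{Finite-endpoint case.} By Lemma~\ref{lem:local-constancy}, $N(T)\to\infty$ as $T\uparrow T_+$. We pair the fixed profile $e^{\ii(T-t)\Delta}\phi$, which ranges over a compact set in $\dot H^1$ since $T-t$ stays bounded, with $u(T)$. Frequency compactness \eqref{eq:ap-frequency} says that the $\dot H^1$ mass of $u(T)$ at frequencies $\le N(T)/C(\eta)$ is at most $\eta$. The test functions have frequencies in a fixed compact annulus, so for $T$ near $T_+$ the pairing is $O(\eta^{1/2})$. This gives the claim.

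\textbf{Infinite-endpoint case.} Take a sequence $T_n\to\infty$ and argue by contradiction: suppose $e^{\ii(t-T_n)\Delta}u(T_n)\rightharpoonup\psi\neq0$ along a subsequence. We write
\[
 u(T_n)=N_n^{1/2}g_n\!\left(N_n(\cdot-x_n)\right),
\]
with $g_n\to g$ in $\dot H^1$ after passing to a subsequence, where $g\neq0$ by Corollary~\ref{cor:gradient-nondegenerate}. Weak nontriviality of $e^{-\ii(T_n-t)\Delta}$ applied to this profile forces the rescaled time $N_n^2(T_n-t)$ to stay bounded along the subsequence, and likewise forces $N_n$ and $x_n$ to stay bounded. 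Otherwise the pairing vanishes: by dispersion when $N_n^2(T_n-t)\to\infty$, by frequency separation when $N_n\to0$ or $\infty$, and by spatial escape when $|x_n|\to\infty$. The case of bounded $N_n$ and $x_n$ with $T_n\to\infty$ is then excluded by the dispersive decay of $e^{\ii s\Delta}$ on the compact profile set. Hence $e^{\ii(t-T_n)\Delta}u(T_n)\rightharpoonup0$.

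The main obstacle is this last step. The usual treatment (cf.\ \cite[Proposition~5.23]{KillipVisan2013}) proves weak convergence to zero by exactly this profile argument. The nonscattering hypothesis enters only to guarantee that the limit profile cannot sit at finite rescaled time, and that point is where I would check carefully. If the naive dispersive argument leaves a gap when $N_n^2(T_n-t)$ is bounded, I would instead use the Strichartz form of the Duhamel integral to show that a nonzero weak limit $\psi$ would produce $u(t)-e^{\ii t\Delta}\psi$ with finite scattering size on $[t,\infty)$. That would contradict the fact that $u$ does not scatter, using the scattering criterion in Proposition~\ref{prop:local-theory}(i). Identity \eqref{eq:no-waste-finite} is just the finite-endpoint case of \eqref{eq:no-waste}.
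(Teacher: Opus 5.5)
Your proposal is correct and follows the paper's route. You rewrite the Duhamel formula backward from $T$ and reduce to showing $e^{\ii(t-T)\Delta}u(T)\rightharpoonup0$ in $\dot H^1$. The paper simply imports that step from Killip--Visan, whereas you reprove it: frequency compactness with $N(T)\to\infty$ when $T_+<\infty$, and $L^6$ dispersion plus properness of the scaling--translation group when $T_+=\infty$.

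Your infinite-endpoint argument already closes on its own. If $N_n$ stays bounded below, then $N_n^2(T_n-t)\to\infty$; if the rescaled time stays bounded, then $N_n\to0$. So the nonscattering hypothesis is never used, and the Strichartz fallback is unnecessary. One small correction: take the test functions to be Schwartz with $\widehat\phi\in C_c^\infty(\R^3\setminus\{0\})$, since a nonzero $C_c^\infty$ function cannot have compact Fourier support.
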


\begin{proof}
We explain the source of Proposition~\ref{prop:no-waste} and check the sign change in the Duhamel formula when the defocusing convention in the literature is rewritten with the present focusing sign. The result is \cite[Proposition~1.9]{KillipVisan2012}; the abstract energy-critical form is \cite[Proposition~5.23]{KillipVisan2013}. For $t<T<T_+$, the present Duhamel convention gives
\begin{equation}\label{eq:no-waste-rearranged-duhamel}
 u(t)=e^{\ii(t-T)\Delta}u(T)
 -\ii\int_t^T e^{\ii(t-s)\Delta}|u(s)|^4u(s)\dd s.
\end{equation}
The asymptotic orthogonality argument in those references yields
\[
 e^{-\ii T\Delta}u(T)\rightharpoonup0
 \quad\text{in }\dot H^1
 \qquad(T\uparrow T_+),
\]
and therefore
\[
 e^{\ii(t-T)\Delta}u(T)
 =e^{\ii t\Delta}\bigl(e^{-\ii T\Delta}u(T)\bigr)\rightharpoonup0.
\]
Passing to the limit in \eqref{eq:no-waste-rearranged-duhamel} gives \eqref{eq:no-waste}. The literature often writes the defocusing equation as $\ii u_t+\Delta u=|u|^4u$, whereas \eqref{eq:nls} is $\ii u_t+\Delta u=-|u|^4u$. The sign before the Duhamel integral is therefore reversed, while the weak-convergence argument is unchanged.
\end{proof}

For profile decompositions, write the scaling--translation action as
\[
 (G_{\lambda,x_0}f)(x)
 :=\lambda^{-1/2}f\left(\frac{x-x_0}{\lambda}\right),
 \qquad \lambda>0,\quad x_0\in\R^3,
\]
with inverse
\[
 (G_{\lambda,x_0}^{-1}g)(y)
 :=\lambda^{1/2}g(x_0+\lambda y).
\]
The corresponding transformation of solutions is
\[
 (\mathcal T_{\lambda,x_0}u)(t,x)
 :=\lambda^{-1/2}u\left(\frac{t}{\lambda^2},
 \frac{x-x_0}{\lambda}\right).
\]
These transformations preserve the $\dot H^1$ norm, the energy, and the critical scattering size.

The construction of a minimal critical element requires a combined linear profile decomposition and nonlinear approximation statement.

\begin{proposition}[Nonlinear profile approximation]\label{prop:profile-package}
Let $\{f_n\}$ be bounded in $\dot H^1(\R^3)$. After passing to a subsequence, for every profile index $j\in\mathbb N$ there exist
\[
 \phi^j\in\dot H^1,
 \qquad
 (\lambda_n^j,x_n^j,\tau_n^j)\in(0,\infty)\times\R^3\times\R,
\]
and for every truncation index $J\in\mathbb N$ there is a remainder $w_n^J\in\dot H^1(\R^3)$ such that
\[
 f_n=\sum_{j=1}^J
 G_{\lambda_n^j,x_n^j}e^{\ii\tau_n^j\Delta}\phi^j+w_n^J.
\]
For $j\ne k$, the parameters satisfy
\begin{align}
 &\frac{\lambda_n^j}{\lambda_n^k}
 +\frac{\lambda_n^k}{\lambda_n^j}
 +\frac{|x_n^j-x_n^k|^2}{\lambda_n^j\lambda_n^k}
 +\frac{|(\lambda_n^j)^2\tau_n^j-(\lambda_n^k)^2\tau_n^k|}
 {\lambda_n^j\lambda_n^k}
 \longrightarrow\infty.
 \label{eq:profile-parameter-orthogonality}
\end{align}
After another subsequence and absorption of finite time limits into the profiles, for each $j$,
\begin{equation}\label{eq:profile-time-normalization}
 \tau_n^j\equiv0,
 \qquad\text{or}\qquad
 \tau_n^j\to+\infty,
 \qquad\text{or}\qquad
 \tau_n^j\to-\infty.
\end{equation}
Moreover,
\begin{align}
 \|\nabla f_n\|_2^2
 &=\sum_{j=1}^J\|\nabla\phi^j\|_2^2
   +\|\nabla w_n^J\|_2^2+o_n(1),
 \label{eq:profile-kinetic-decoupling}\\
 E(f_n)
 &=\sum_{j=1}^J
 E\!\left(G_{\lambda_n^j,x_n^j}
 e^{\ii\tau_n^j\Delta}\phi^j\right)
 +E(w_n^J)+o_n(1),
 \label{eq:profile-energy-decoupling}\\
 \lim_{J\to\infty}\limsup_{n\to\infty}
 \|e^{\ii t\Delta}w_n^J\|_{L_{t,x}^{10}(\R\times\R^3)}&=0.
 \label{eq:profile-linear-remainder-small}
\end{align}

Let $U^j:I^j\times\R^3\to\C$ be the associated nonlinear profile: if $\tau_n^j\equiv0$, then $U^j(0)=\phi^j$; if $\tau_n^j\to\pm\infty$, then
\begin{equation}\label{eq:nonlinear-profile-wave-operator}
 \|U^j(t)-e^{\ii t\Delta}\phi^j\|_{\dot H^1}
 \longrightarrow0
 \qquad(t\to\pm\infty).
\end{equation}
Define
\[
 U_n^j(t,x)
 :=(\lambda_n^j)^{-1/2}
 U^j\left(\frac{t}{(\lambda_n^j)^2}+\tau_n^j,
 \frac{x-x_n^j}{\lambda_n^j}\right).
\]
Then, for each fixed $j$,
\begin{equation}\label{eq:nonlinear-profile-initial-agreement}
 \|U_n^j(0)-G_{\lambda_n^j,x_n^j}
 e^{\ii\tau_n^j\Delta}\phi^j\|_{\dot H^1}
 \longrightarrow0.
\end{equation}

Let $u_n:I_n\times\R^3\to\C$ be the exact solution with initial datum $f_n$ at $t=0$, and set
\begin{align*}
 I_n^+&:=I_n\cap[0,\infty),
 &I_n^-&:=I_n\cap(-\infty,0],\\
 I_{n,j}^\sigma
 &:=\tau_n^j+(\lambda_n^j)^{-2}I_n^\sigma,
 &&\sigma\in\{+,-\}.
\end{align*}
If $I_{n,j}^\sigma\not\subset I^j$, we set $S_{I_{n,j}^\sigma}(U^j)=\infty$. Fix $\sigma\in\{+,-\}$. Suppose there exists $J_0\in\mathbb N$ such that
\begin{align}
 &\|\nabla\phi^j\|_2\le\eta_0
 \qquad(j\ge J_0),
 \label{eq:profile-small-tail}\\
 &\max_{1\le j<J_0}
 \limsup_{n\to\infty}S_{I_{n,j}^\sigma}(U^j)<\infty,
 \label{eq:large-profile-bounds}
\end{align}
where $\eta_0$ is the small-data threshold; when $J_0=1$, \eqref{eq:large-profile-bounds} is interpreted as an empty condition. Then
\begin{equation}\label{eq:profile-stability-consequence}
 \limsup_{n\to\infty}S_{I_n^\sigma}(u_n)<\infty.
\end{equation}
\end{proposition}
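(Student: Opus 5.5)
This result is the standard linear profile decomposition combined with the nonlinear profile/stability argument of Kenig--Merle and Killip--Visan, so I would prove it by assembling known inputs rather than by a new argument.

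\textbf{Linear decomposition.} The first half (decomposition, orthogonality \eqref{eq:profile-parameter-orthogonality}, the trichotomy \eqref{eq:profile-time-normalization}, decoupling \eqref{eq:profile-kinetic-decoupling}--\eqref{eq:profile-energy-decoupling}, and vanishing \eqref{eq:profile-linear-remainder-small}) is the $\dot H^1$ profile decomposition for $e^{\ii t\Delta}$. I would take it from Keraani \cite{Keraani2001} and Bahouri--G\'erard \cite{BahouriGerard1999}, in the formulation of \cite[Theorem~4.1]{KillipVisan2013}. The normalization \eqref{eq:profile-time-normalization} comes from passing to a subsequence with $\tau_n^j\to\tau^j\in[-\infty,\infty]$. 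When $\tau^j$ is finite, $e^{\ii\tau^j\Delta}\phi^j$ replaces $\phi^j$, and the $o(1)$ error is put into $w_n^J$. Energy decoupling requires $L^6$ decoupling of the profiles. This follows from \eqref{eq:profile-parameter-orthogonality} by a Brezis--Lieb type argument. For profiles with $\tau_n^j\to\pm\infty$, the dispersive decay $\|e^{\ii\tau\Delta}\phi\|_6\to0$ kills the potential term.

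\textbf{Nonlinear profiles.} Each $U^j$ is built from local well-posedness (Proposition~\ref{prop:local-theory}) when $\tau_n^j\equiv0$. When $\tau_n^j\to\pm\infty$, it is built from the wave operator, obtained by a contraction on $[T,\infty)$ for large $T$, which gives \eqref{eq:nonlinear-profile-wave-operator}. The agreement \eqref{eq:nonlinear-profile-initial-agreement} then follows from the scaling invariance of $\dot H^1$ together with \eqref{eq:nonlinear-profile-wave-operator}.

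\textbf{Stability consequence.} Fix $\sigma$ and form the approximate solution
\[
 \widetilde u_n^J:=\sum_{j=1}^J U_n^j+e^{\ii t\Delta}w_n^J
\]
on $I_n^\sigma$. I would argue in three steps.
\begin{enumerate}[label=\textup{(\arabic*)},leftmargin=2.4em]
 \item \emph{Small profiles.} For $j\ge J_0$, the small-data theory and \eqref{eq:profile-small-tail} give $S(U^j)\lesssim\|\nabla\phi^j\|_2$. Since $\sum_j\|\nabla\phi^j\|_2^2<\infty$ by \eqref{eq:profile-kinetic-decoupling}, this yields square summability of the scattering sizes.
 \item \emph{Uniform scattering size.} The profiles decouple in $L^{10}_{t,x}$, since the cross terms $\|U_n^jU_n^k\|_{L^5_{t,x}}\to0$ by \eqref{eq:profile-parameter-orthogonality}. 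Combined with \eqref{eq:large-profile-bounds}, this gives $\limsup_n S_{I_n^\sigma}(\widetilde u_n^J)\le L$ uniformly in $J$. The $L_t^\infty\dot H^1_x$ bound comes the same way from kinetic decoupling.
 \item \emph{Error estimate.} The error is
 \[
  e_n^J=|\widetilde u_n^J|^4\widetilde u_n^J-\sum_j|U_n^j|^4U_n^j-\text{(linear part)}.
 \]
 I need $\limsup_{J}\limsup_n\|\nabla e_n^J\|_{L^2_tL^{6/5}_x}=0$. Interactions between distinct profiles vanish by orthogonality, after approximating $U^j$ by $C_c^\infty$ functions in spacetime. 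Terms involving $e^{\ii t\Delta}w_n^J$ vanish by \eqref{eq:profile-linear-remainder-small}, together with a bound on $\nabla e^{\ii t\Delta}w_n^J$ in $L^{10/3}_{t,x}$ and interpolation.
\end{enumerate}
Finally, $u_n(0)-\widetilde u_n^J(0)\to0$ in $\dot H^1$ by \eqref{eq:nonlinear-profile-initial-agreement}. Proposition~\ref{prop:local-theory}(iii) with $E'$ small then gives \eqref{eq:profile-stability-consequence}.

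\textbf{Main obstacle.} The hard step is (3), specifically the terms of the form $\nabla(|\widetilde u|^4)\,e^{\ii t\Delta}w_n^J$. There the gradient falls on the remainder, which is small only in $L^{10}$ and not in a gradient norm. I would handle this exactly as in \cite[Lemma~3.3]{KillipVisan2013}: use a local smoothing/bilinear estimate to show $\|U_n^j\,\nabla e^{\ii t\Delta}w_n^J\|_{L^{5/4}_{t,x}}\to0$ as $n\to\infty$ and then $J\to\infty$. Since Proposition~\ref{prop:local-theory} notes that all the estimates depend only on $|F(z)|$ and its derivative bounds, the focusing sign causes no change, and I would simply cite \cite{KillipVisan2013,KenigMerle2006} for this step after checking the constants are sign-independent.
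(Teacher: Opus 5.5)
Your proposal is correct and follows essentially the same route as the paper. Both cite Keraani's linear decomposition, with Brezis--Lieb supplying the energy decoupling, build the nonlinear profiles by local theory and wave operators, and feed $\widetilde u_n^J=\sum_{j\le J}U_n^j+e^{\ii t\Delta}w_n^J$ with its three standard inputs (vanishing error, matching data, uniform bounds, as in \cite{KillipVisan2013}) into the sign-independent stability statement, Proposition~\ref{prop:local-theory}(iii). One small slip in your error step: H\"older against $\|U\|_{L^{10}_{t,x}}^3$ in the dual exponent $10/7$ puts the bilinear factor $U_n^j\nabla e^{\ii t\Delta}w_n^J$ in $L^{5/2}_{t,x}$, not $L^{5/4}_{t,x}$.
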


\begin{proof}
We explain the sources of the linear and nonlinear profile decompositions used in Proposition~\ref{prop:profile-package} and the adaptation of the nonlinear approximation to the present focusing sign and energy-threshold notation. The linear decomposition, \eqref{eq:profile-parameter-orthogonality}, \eqref{eq:profile-kinetic-decoupling}, and \eqref{eq:profile-linear-remainder-small} are due to Keraani \cite{Keraani2001}; see also \cite[Theorem~4.27]{KillipVisan2013}. Brezis--Lieb decoupling of the potential energy \cite{BrezisLieb1983}, together with \eqref{eq:profile-kinetic-decoupling}, gives \eqref{eq:profile-energy-decoupling}. The wave operators follow from the small-tail theory in Proposition~\ref{prop:local-theory}, and \eqref{eq:nonlinear-profile-initial-agreement} follows directly from \eqref{eq:nonlinear-profile-wave-operator}.

Under \eqref{eq:profile-small-tail}--\eqref{eq:large-profile-bounds}, the profiles $U_n^j$ are defined on $I_n^\sigma$ for all sufficiently large $n$. Put
\[
 \widetilde u_n^J
 :=\sum_{j=1}^J U_n^j+e^{\ii t\Delta}w_n^J.
\]
Parameter orthogonality, spacetime decoupling of nonlinear profiles, and linear Strichartz smallness of the remainder yield
\begin{align}
 &\lim_{J\to\infty}\limsup_{n\to\infty}
 \left\|\nabla\left[
 (\ii\partial_t+\Delta)\widetilde u_n^J
 +|\widetilde u_n^J|^4\widetilde u_n^J
 \right]\right\|_{L_t^2L_x^{6/5}(I_n^\sigma\times\R^3)}=0,
 \label{eq:nonlinear-profile-error-small}\\
 &\lim_{J\to\infty}\limsup_{n\to\infty}
 \|\widetilde u_n^J(0)-f_n\|_{\dot H^1}=0,\notag\\
 &\sup_J\limsup_{n\to\infty}
 \left[
 S_{I_n^\sigma}(\widetilde u_n^J)
 +\|\widetilde u_n^J\|_{L_t^\infty\dot H_x^1(I_n^\sigma)}
 \right]<\infty.
 \label{eq:nonlinear-profile-uniform-bounds}
\end{align}
These are precisely the three inputs of the nonlinear profile approximation in \cite[Lemmas~5.7--5.10 and the proof of Proposition~5.6]{KillipVisan2013}. Applying Proposition~\ref{prop:local-theory}(iii) to \eqref{eq:nonlinear-profile-error-small}--\eqref{eq:nonlinear-profile-uniform-bounds}, first fixing a sufficiently large $J$ and then letting $n\to\infty$, gives \eqref{eq:profile-stability-consequence}.

The cited analysis is carried out for
\[
 (\ii\partial_t+\Delta)u=\mu|u|^4u,
 \qquad \mu\in\{-1,+1\},
\]
and the present equation corresponds to $\mu=-1$. The error estimates use only \eqref{eq:nonlinearity-sign-free-1}--\eqref{eq:nonlinearity-sign-free-3}. The only additional focusing requirement is that every profile declared subcritical lies and remains on the stable below-threshold branch; this follows from Lemma~\ref{lem:coercivity}, conservation of energy, and continuity.
\end{proof}

The profile approximation yields compactness at the critical energy.

\begin{proposition}[Palais--Smale compactness]\label{prop:palais-smale-energy}
Assume that below-threshold scattering fails, and let $E_c$ be defined by \eqref{eq:critical-energy}. Let $u_n:I_n\times\R^3\to\C$ be maximal-lifespan strong solutions and $t_n\in I_n$ such that
\begin{align}
 u_n(t_n)&\in\mathcal A,
 \qquad E(u_n)\longrightarrow E_c,
 \label{eq:PS-energy-assumptions}\\
 S_{I_n\cap(-\infty,t_n]}(u_n)&\longrightarrow\infty,
 \qquad
 S_{I_n\cap[t_n,\infty)}(u_n)\longrightarrow\infty.
 \label{eq:PS-directional-blowup}
\end{align}
Then, after passing to a subsequence, there exist $\lambda_n>0$, $x_n\in\R^3$, and $f\in\dot H^1(\R^3)$ such that
\begin{equation}\label{eq:PS-strong-conclusion}
 G_{\lambda_n,x_n}^{-1}u_n(t_n)
 \longrightarrow f
 \qquad\text{in }\dot H^1(\R^3).
\end{equation}
\end{proposition}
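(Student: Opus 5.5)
This is the standard Kenig--Merle/Killip--Visan Palais--Smale argument, run inside the profile package of Proposition~\ref{prop:profile-package}. We apply that proposition to the bounded sequence $f_n:=u_n(t_n)$; boundedness in $\dot H^1$ follows from $u_n(t_n)\in\mathcal A$. Time-translating, we may assume $t_n=0$, so that $I_n^\pm$ are the forward and backward halves of the lifespans.

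\emph{Energy accounting.} Every profile, and every remainder $w_n^J$, inherits the bound $\|\nabla\cdot\|_2^2<\|\nabla W\|_2^2$ from kinetic decoupling \eqref{eq:profile-kinetic-decoupling}. Hence each term lies in $\mathcal A$ for large $n$. By Lemma~\ref{lem:coercivity}, each has nonnegative energy comparable to its $\dot H^1$ norm squared. Energy decoupling \eqref{eq:profile-energy-decoupling} then gives
\[
\limsup_n\Bigl[\sum_{j\le J}E\bigl(G_{\lambda_n^j,x_n^j}e^{\ii\tau_n^j\Delta}\phi^j\bigr)+E(w_n^J)\Bigr]\le E_c .
\]

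\emph{Case analysis.} Suppose first that either there is no nonzero profile, or every profile has asymptotic energy $\le E_c-\varepsilon$ for some fixed $\varepsilon>0$. Each profile is then subcritical, so \eqref{eq:critical-energy-range} gives finite scattering size. This is clear when $\tau_n^j\equiv0$. When $\tau_n^j\to\pm\infty$, the wave-operator profile has energy $\lim_t E(U^j(t))=\frac12\|\nabla\phi^j\|_2^2$; this is the limit of the profile energy, since the $L^6$ part of $e^{\ii t\Delta}\phi^j$ decays, and it stays in $\mathcal A$. Only finitely many profiles exceed the small-data threshold $\eta_0$, because the norms are square-summable. Thus \eqref{eq:profile-small-tail}--\eqref{eq:large-profile-bounds} hold with uniform bounds $\mathcal L(E_c-\varepsilon)$, and \eqref{eq:profile-stability-consequence} contradicts \eqref{eq:PS-directional-blowup} in both directions. (If there are no profiles, the linear remainder is small and small-data theory gives the same contradiction.)

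Otherwise there is exactly one profile, $J=1$. Coercivity forces every other profile to have vanishing energy and hence to vanish, and it forces $\|\nabla w_n^1\|_2\to0$. We now rule out $\tau_n^1\to\pm\infty$. If $\tau_n^1\to+\infty$, then $\|e^{\ii t\Delta}G_{\lambda_n,x_n}e^{\ii\tau_n^1\Delta}\phi^1\|_{L^{10}_{t,x}([0,\infty)\times\R^3)}\to0$ by monotone convergence of the Strichartz norm over $[\tau_n^1,\infty)$. Since $w_n^1\to0$ in $\dot H^1$, small-data stability (Proposition~\ref{prop:local-theory}(iii), with $\widetilde u=0$ or with the free evolution as approximate solution) bounds $S_{I_n^+}(u_n)$, contradicting forward blowup. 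The case $\tau_n^1\to-\infty$ is symmetric and uses backward blowup; this is why both halves of \eqref{eq:PS-directional-blowup} are assumed. Hence $\tau_n^1\equiv0$, and
\[
G_{\lambda_n^1,x_n^1}^{-1}u_n(0)=\phi^1+G^{-1}w_n^1\to\phi^1
\]
in $\dot H^1$, which is \eqref{eq:PS-strong-conclusion} with $f=\phi^1$.

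\emph{Main obstacle.} The delicate step is the subcritical case. One must make sure that each nonlinear profile, including the wave-operator ones, is a below-threshold solution with energy strictly below $E_c$ uniformly in $n$, so that $\mathcal L$ applies. This needs the coercivity of Lemma~\ref{lem:coercivity} applied to both profiles and remainders, together with the observation that strict energy deficit survives in the limit. A secondary point is the case where all profiles vanish but the energy is concentrated in the remainder. Coercivity excludes this: $E(w_n^J)\to E_c>0$ would force $\|\nabla w_n^J\|_2$ to stay bounded below, but the remainder then has small Strichartz norm, and stability gives finite scattering size, again a contradiction.
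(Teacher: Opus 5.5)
Your proposal is correct and follows essentially the paper's route. The ingredients are the same: a profile decomposition of $u_n(t_n)$, nonnegative coercive energy decoupling on the stable branch, minimality of $E_c$ to force a single profile carrying all the energy with $\|\nabla w_n^1\|_2\to0$, and exclusion of $\tau_n^1\to\pm\infty$ by smallness of the free evolution plus stability in the corresponding time direction. The only difference is organizational. You split on whether $\max_j E^j<E_c$ (all profiles subcritical, hence good in both directions) or $\max_j E^j=E_c$, whereas the paper first uses the nonlinear profile approximation to extract a forward bad profile and then shows by minimality that its energy must equal $E_c$. In either version you should state explicitly the uniform gap $\sup_n\|\nabla u_n(t_n)\|_2^2\le(1-\delta)\|\nabla W\|_2^2$ from Lemma~\ref{lem:coercivity}, since it is what makes the strict gradient bound for the remainders $w_n^J$ hold for large $n$.
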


\begin{proof}
After a time translation, assume $t_n=0$ and write
\[
 I_n^-:=I_n\cap(-\infty,0],
 \qquad
 I_n^+:=I_n\cap[0,\infty).
\]
Choose
\[
 E_c<E^\sharp<E(W).
\]
By \eqref{eq:PS-energy-assumptions}, conservation of energy, and Lemma~\ref{lem:coercivity}, after discarding finitely many indices there is $\delta>0$ such that
\begin{equation}\label{eq:PS-uniform-gradient-gap}
 \sup_n\sup_{t\in I_n}\|\nabla u_n(t)\|_2^2
 \le(1-\delta)\|\nabla W\|_2^2.
\end{equation}

Apply Proposition~\ref{prop:profile-package} to $f_n:=u_n(0)$, and take a diagonal subsequence so that
\[
 E^j
 :=\lim_{n\to\infty}
 E\!\left(G_{\lambda_n^j,x_n^j}
 e^{\ii\tau_n^j\Delta}\phi^j\right)
\]
exists for every $j$. If $\tau_n^j\equiv0$, then $E^j=E(\phi^j)=E(U^j)$. If $\tau_n^j\to\pm\infty$, density in $\dot H^1$ and free dispersion give
\[
 \|e^{\ii t\Delta}\phi^j\|_6\longrightarrow0
 \qquad(|t|\to\infty).
\]
Together with \eqref{eq:nonlinear-profile-wave-operator} and conservation of energy, this yields
\[
 E^j=\frac12\|\nabla\phi^j\|_2^2=E(U^j).
\]
Thus $E^j=E(U^j)$ for all $j$.

By \eqref{eq:profile-kinetic-decoupling} and \eqref{eq:PS-uniform-gradient-gap},
\begin{equation}\label{eq:PS-individual-gradient-gap}
 \|\nabla\phi^j\|_2^2
 \le(1-\delta)\|\nabla W\|_2^2
 \qquad(j\ge1),
\end{equation}
and, for each fixed $J$ and all sufficiently large $n$,
\[
 \|\nabla w_n^J\|_2^2
 \le(1-\delta/2)\|\nabla W\|_2^2.
\]
The sharp Sobolev inequality therefore gives $c_\delta>0$ such that
\begin{align}
 E^j&\ge c_\delta\|\nabla\phi^j\|_2^2\ge0,
 \label{eq:PS-profile-energy-positive}\\
 E(w_n^J)&\ge c_\delta\|\nabla w_n^J\|_2^2\ge0
 \qquad(n\gg_J1).
 \label{eq:PS-remainder-energy-positive}
\end{align}
Letting first $n\to\infty$ in \eqref{eq:profile-energy-decoupling} and then $J\to\infty$, we obtain
\[
 \sum_{j\ge1}E^j\le E_c.
\]

We next identify a profile responsible for forward divergence. By \eqref{eq:profile-kinetic-decoupling}, there is $J_0\in\mathbb N$ such that
\[
 \|\nabla\phi^j\|_2\le\eta_0
 \qquad(j\ge J_0),
\]
where $\eta_0$ is the small-data threshold. Call an index $1\le j<J_0$ a forward bad profile if
\[
 \limsup_{n\to\infty}S_{I_{n,j}^+}(U^j)=\infty.
\]
If no forward bad profile existed, then
\begin{equation}\label{eq:PS-all-large-profiles-good}
 \max_{1\le j<J_0}
 \limsup_{n\to\infty}S_{I_{n,j}^+}(U^j)<\infty,
\end{equation}
and Proposition~\ref{prop:profile-package} would give
\[
 \limsup_{n\to\infty}S_{I_n^+}(u_n)<\infty,
\]
contradicting \eqref{eq:PS-directional-blowup}. This also covers the case in which every profile vanishes, because \eqref{eq:PS-all-large-profiles-good} is then an empty condition. Hence there is $j_0$ such that
\begin{equation}\label{eq:PS-forward-bad-profile}
 \limsup_{n\to\infty}S_{I_{n,j_0}^+}(U^{j_0})=\infty.
\end{equation}

If $E^{j_0}<E_c$, choose $E^{j_0}<E_*<E_c$. When $\tau_n^{j_0}\equiv0$, take $t_*=0$; when $\tau_n^{j_0}\to\pm\infty$, choose $t_*$ sufficiently far in the corresponding scattering direction using \eqref{eq:nonlinear-profile-wave-operator}. In conjunction with \eqref{eq:PS-individual-gradient-gap}, we may arrange
\[
 E(U^{j_0}(t_*))=E^{j_0}\le E_*,
 \qquad
 \|\nabla U^{j_0}(t_*)\|_2<\|\nabla W\|_2.
\]
Thus $U^{j_0}(t_*)\in\mathcal A$. By \eqref{eq:critical-energy-range} and \eqref{eq:uniform-scattering-size}, the maximal-lifespan solution with this datum is global and satisfies
\[
 S(U^{j_0})\le\mathcal L(E_*)<\infty.
\]
Uniqueness in the critical class identifies it with the original nonlinear profile, contradicting \eqref{eq:PS-forward-bad-profile}. Therefore $E^{j_0}=E_c$.

Relabel $j_0=1$. Taking $J=1$ in \eqref{eq:profile-energy-decoupling}, and using \eqref{eq:PS-remainder-energy-positive} and $E^1=E_c$, gives
\begin{align*}
 E(w_n^1)
 &=E(f_n)-E\!\left(G_{\lambda_n^1,x_n^1}
 e^{\ii\tau_n^1\Delta}\phi^1\right)+o_n(1)
 \longrightarrow0,\\
 \|\nabla w_n^1\|_2^2
 &\le c_\delta^{-1}E(w_n^1)
 \longrightarrow0.
\end{align*}
Setting
\[
 \lambda_n:=\lambda_n^1,
 \quad x_n:=x_n^1,
 \quad \tau_n:=\tau_n^1,
 \quad \phi:=\phi^1,
 \quad w_n:=w_n^1,
\]
we have
\begin{equation}\label{eq:PS-single-profile-data}
 f_n=G_{\lambda_n,x_n}e^{\ii\tau_n\Delta}\phi+w_n,
 \qquad
 \|w_n\|_{\dot H^1}\to0,
 \qquad \phi\ne0.
\end{equation}

By \eqref{eq:profile-time-normalization}, it remains to exclude $\tau_n\to\pm\infty$. If $\tau_n\to+\infty$, scale invariance and Strichartz estimates give
\[
 \|e^{\ii t\Delta}f_n\|_{L_{t,x}^{10}([0,\infty)\times\R^3)}
 \le
 \|e^{\ii t\Delta}\phi\|_{L_{t,x}^{10}([\tau_n,\infty)\times\R^3)}
 +C\|w_n\|_{\dot H^1}
 \longrightarrow0.
\]
Applying Proposition~\ref{prop:local-theory}(iii) with the zero solution as approximate solution gives $S_{I_n^+}(u_n)\to0$, contradicting \eqref{eq:PS-directional-blowup}. If $\tau_n\to-\infty$, similarly,
\begin{align*}
 \|e^{\ii t\Delta}f_n\|_{L_{t,x}^{10}(( -\infty,0]\times\R^3)}
 &\le
 \|e^{\ii t\Delta}\phi\|_{L_{t,x}^{10}(( -\infty,\tau_n]\times\R^3)}
 +C\|w_n\|_{\dot H^1}
 \longrightarrow0,\\
 S_{I_n^-}(u_n)&\longrightarrow0,
\end{align*}
again a contradiction. Thus the finite time limit may be absorbed into $\phi$, and we take $\tau_n\equiv0$. Equation \eqref{eq:PS-single-profile-data} then yields
\[
 G_{\lambda_n,x_n}^{-1}f_n
 =\phi+G_{\lambda_n,x_n}^{-1}w_n
 \longrightarrow\phi
 \quad\text{in }\dot H^1,
\]
which is \eqref{eq:PS-strong-conclusion}.

This is the energy-threshold version of \cite[Proposition~5.6]{KillipVisan2013}. The reference is parametrized by a critical kinetic-energy threshold. Here \eqref{eq:PS-uniform-gradient-gap}, \eqref{eq:PS-profile-energy-positive}, and \eqref{eq:PS-remainder-energy-positive} keep every profile and remainder on the stable below-threshold branch, allowing the conserved energy $E_c$ to replace the nonconserved kinetic threshold. The remaining bad-profile, nonlinear-approximation, and time-parameter arguments are the same.
\end{proof}

Combining Palais--Smale compactness with a minimizing sequence produces the critical element required below.

\begin{theorem}[Minimal critical element]\label{thm:minimal-element}
If not every datum satisfying \eqref{eq:threshold} generates a global solution scattering in both time directions, then there exists a maximal-lifespan strong solution
\[
 u_c:I_c=(T_-,T_+)\longrightarrow\dot H^1(\R^3),
 \qquad 0\in I_c,
\]
with the following properties:
\begin{enumerate}[label=\textup{(\roman*)},leftmargin=2.4em]
 \item
 \begin{equation}\label{eq:minimal-energy-and-branch}
  E(u_c)=E_c,
  \qquad
  0<E_c<E(W),
  \qquad
  \|\nabla u_c(t)\|_2<\|\nabla W\|_2
  \quad(t\in I_c);
 \end{equation}
 \item $u_c$ is almost periodic modulo scaling and translation;
 \item the critical scattering size diverges toward both lifespan endpoints:
 \[
  S_{(T_-,0]}(u_c)
  =S_{[0,T_+)}(u_c)
  =\infty;
 \]
 \item $u_c$ satisfies the local constancy statement of Lemma~\ref{lem:local-constancy} and the no-waste Duhamel formula of Proposition~\ref{prop:no-waste}.
\end{enumerate}
\end{theorem}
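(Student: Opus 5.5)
The plan is the standard Kenig--Merle construction, with Proposition~\ref{prop:palais-smale-energy} as the compactness engine. Since scattering fails, $E_c$ in \eqref{eq:critical-energy} is well defined and \eqref{eq:critical-energy-range} holds. I first produce a sequence of data $u_n(0)\in\mathcal A$ with $E(u_n)\to E_c$ and $S_{I_n}(u_n)\to\infty$. This is possible because $\mathcal L(E_*)=\infty$ for every $E_*\in(E_c,E(W))$: take $E_*\downarrow E_c$, extract data with energy at most $E_*$ and scattering size at least $n$, and note that energies below $E_c$ are ruled out for large $n$, since otherwise the scattering size would stay bounded by $\mathcal L$ of a fixed subcritical level. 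Scattering size is finite on any compact subinterval, so I can choose $t_n\in I_n$ splitting the diverging norm equally, $S_{I_n\cap(-\infty,t_n]}(u_n)=S_{I_n\cap[t_n,\infty)}(u_n)\to\infty$. This gives \eqref{eq:PS-directional-blowup}, while \eqref{eq:PS-energy-assumptions} follows from energy conservation and Lemma~\ref{lem:coercivity}.

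Proposition~\ref{prop:palais-smale-energy} then yields $G_{\lambda_n,x_n}^{-1}u_n(t_n)\to u_{c,0}$ in $\dot H^1$. Let $u_c$ be the maximal solution with this datum (Proposition~\ref{prop:local-theory}). Strong convergence gives $E(u_c)=E_c$ and $\|\nabla u_{c,0}\|_2\le(1-\delta)^{1/2}\|\nabla W\|_2$ by \eqref{eq:PS-uniform-gradient-gap}, so (i) holds at $t=0$ and then at all times by Lemma~\ref{lem:coercivity}.

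For (iii), suppose $S_{[0,T_+)}(u_c)<\infty$. Then $T_+=\infty$ by the blow-up criterion, and Proposition~\ref{prop:local-theory}(iii), applied on $[0,\infty)$ with $u_c$ as the approximate solution, gives $\limsup_n S_{[t_n,\infty)}(u_n)<\infty$ after undoing the symmetry $G_{\lambda_n,x_n}$. This contradicts the choice of $t_n$; the backward direction is symmetric.

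For (ii), take any sequence $\tau_n\in I_c$. The solution $u_c$ satisfies the hypotheses of Proposition~\ref{prop:palais-smale-energy} with $t_n=\tau_n$, since by (iii) both half-norms from $\tau_n$ are infinite. Hence $\{u_c(\tau_n)\}$ is precompact modulo $G$. A standard argument turns this sequential statement into functions $N(t),x(t)$ making $\mathcal K_u$ precompact: for instance, fix a reference compact set and choose $N(t),x(t)$ to nearly optimize the distance of $G^{-1}u_c(t)$ to it. Nontriviality ($E_c>0$) keeps $\lambda$ away from $0$ and $\infty$ along such sequences. Finally, (iv) follows by applying Lemma~\ref{lem:local-constancy} and Proposition~\ref{prop:no-waste} to $u_c$, since $u_c$ is almost periodic and does not scatter forward by (iii).

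The main obstacle is this last passage from sequential compactness to almost periodicity with measurable modulation parameters. I would handle it by the argument of \cite[Theorem~5.1]{KillipVisan2013}, citing it rather than reproducing it.
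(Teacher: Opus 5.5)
Your proposal is correct and follows the paper's proof essentially step for step: a minimizing sequence with $E(u_n)\to E_c$, a split time $t_n$ with both half-norms diverging, Palais--Smale at $t_n$ to produce $u_c$, stability to transfer the divergence of the scattering size to $u_c$, and Palais--Smale along arbitrary times $\tau_n$ combined with the Killip--Visan quotient-compactness criterion for (ii). The differences are cosmetic: you apply long-time stability on $[0,\infty)$ in one stroke rather than on a finite partition, and your ``equal split'' should be carried out on a compact subinterval where the size is finite, as the paper does via the intermediate value theorem, because an exact equal split is impossible when only one half-line carries infinite scattering size.
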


\begin{proof}
By \eqref{eq:critical-energy} and monotonicity of $\mathcal L$, choose
\[
 E_n\downarrow E_c,
 \qquad
 \mathcal L(E_n)=\infty.
\]
There are therefore $a_n\in\mathcal A$ whose maximal-lifespan solutions $u_n:I_n\times\R^3\to\C$ satisfy
\[
 E(a_n)\le E_n,
 \qquad
 S_{I_n}(u_n)\ge2n.
\]
By conservation of energy and invariance of the stable branch,
\[
 u_n(t)\in\mathcal A
 \qquad(t\in I_n).
\]
If, along a subsequence, $E(a_n)\le E_c-\varepsilon$, then
\[
 2n
 \le S_{I_n}(u_n)
 \le\mathcal L(E_c-\varepsilon)
 <\infty,
\]
a contradiction. Hence
\begin{equation}\label{eq:minimizing-energy-convergence}
 E(a_n)\longrightarrow E_c.
\end{equation}

Since $S_{I_n}(u_n)\ge2n$, monotone convergence provides compact intervals $[\alpha_n,\beta_n]\Subset I_n$ such that
\[
 S_{[\alpha_n,\beta_n]}(u_n)\ge2n.
\]
On each interval define
\[
 F_n(t):=S_{[\alpha_n,t]}(u_n)^{10}
 =\int_{\alpha_n}^t\!\int_{\R^3}|u_n(s,x)|^{10}\dd x\dd s.
\]
The function $F_n$ is continuous, with $F_n(\alpha_n)=0$ and $F_n(\beta_n)\ge(2n)^{10}$. The intermediate value theorem gives $t_n\in[\alpha_n,\beta_n]$ such that
\begin{align*}
 S_{[\alpha_n,t_n]}(u_n)&=n,\\
 S_{[t_n,\beta_n]}(u_n)^{10}
 &=S_{[\alpha_n,\beta_n]}(u_n)^{10}-n^{10}
 \ge(2^{10}-1)n^{10}.
\end{align*}
Thus
\begin{align}
 S_{I_n\cap(-\infty,t_n]}(u_n)&\ge n,
 \label{eq:split-past-size}\\
 S_{I_n\cap[t_n,\infty)}(u_n)
 &\ge(2^{10}-1)^{1/10}n.
 \label{eq:split-future-size}
\end{align}
Set $f_n:=u_n(t_n)$. Conservation of energy, \eqref{eq:minimizing-energy-convergence}, and \eqref{eq:split-past-size}--\eqref{eq:split-future-size} place $(u_n,t_n)$ under Proposition~\ref{prop:palais-smale-energy}. Hence there exist $\lambda_n>0$, $x_n\in\R^3$, and $u_0\in\dot H^1$ such that
\[
 G_{\lambda_n,x_n}^{-1}f_n
 \longrightarrow u_0
 \quad\text{in }\dot H^1.
\]
Continuity of the energy on $\dot H^1$ gives $E(u_0)=E_c$. The uniform below-threshold gap in the proof of Proposition~\ref{prop:palais-smale-energy} gives $\delta_c>0$ such that
\[
 \|\nabla u_0\|_2^2
 \le(1-\delta_c)\|\nabla W\|_2^2.
\]
Let $u_c:I_c\times\R^3\to\C$ be the maximal-lifespan solution with $u_c(0)=u_0$. Conservation of energy, continuity, and Lemma~\ref{lem:coercivity} yield \eqref{eq:minimal-energy-and-branch}.

To pass the divergence of scattering size to both directions, define
\[
 \widetilde u_n(t,x)
 :=\lambda_n^{1/2}
 u_n\bigl(t_n+\lambda_n^2t,\,x_n+\lambda_n x\bigr),
 \qquad
 \widetilde I_n
 :=\{t:t_n+\lambda_n^2t\in I_n\}.
\]
Then
\begin{equation}\label{eq:normalized-initial-convergence}
 \widetilde u_n(0)=G_{\lambda_n,x_n}^{-1}f_n
 \longrightarrow u_c(0)
 \quad\text{in }\dot H^1,
\end{equation}
and the critical scattering size is invariant under this transformation.

Suppose $S_{[0,T_+)}(u_c)<\infty$. Proposition~\ref{prop:local-theory}(i) implies $T_+=\infty$. Choose a stability smallness parameter $\eta>0$ and partition $[0,\infty)$ into finitely many consecutive intervals $J_1,\dots,J_L$ such that
\[
 [0,\infty)=\bigcup_{\ell=1}^LJ_\ell,
 \qquad
 S_{J_\ell}(u_c)\le\eta
 \quad(1\le\ell\le L).
\]
Using \eqref{eq:normalized-initial-convergence}, apply Proposition~\ref{prop:local-theory}(iii) successively on these intervals. For all sufficiently large $n$,
\[
 [0,\infty)\subset\widetilde I_n,
 \qquad
 S_{[0,\infty)}(\widetilde u_n)
 \le C\bigl(S_{[0,\infty)}(u_c)\bigr)<\infty.
\]
But scale invariance and \eqref{eq:split-future-size} give
\[
 S_{\widetilde I_n\cap[0,\infty)}(\widetilde u_n)
 \ge(2^{10}-1)^{1/10}n,
\]
a contradiction. Hence $S_{[0,T_+)}(u_c)=\infty$. Repeating the argument backward in time and using \eqref{eq:split-past-size} gives
\begin{equation}\label{eq:minimal-two-sided-blowup}
 S_{(T_-,0]}(u_c)
 =S_{[0,T_+)}(u_c)
 =\infty.
\end{equation}

We now prove almost periodicity. Let $\tau_n\in I_c$ be arbitrary. Since the scattering size is finite on every compact time interval, \eqref{eq:minimal-two-sided-blowup} implies
\[
 S_{(T_-,\tau_n]}(u_c)
 =S_{[\tau_n,T_+)}(u_c)
 =\infty.
\]
Apply Proposition~\ref{prop:palais-smale-energy} at time zero to the translated solutions $v_n(t):=u_c(t+\tau_n)$. After passing to a subsequence, there exist $\mu_n>0$, $y_n\in\R^3$, and $f\in\dot H^1$ such that
\[
 G_{\mu_n,y_n}^{-1}u_c(\tau_n)
 \longrightarrow f
 \quad\text{in }\dot H^1.
\]
Thus every sequence in the orbit can be made strongly convergent by scaling and translation. By the compactness criterion on the quotient space \cite[Proposition~5.6 and Theorem~5.12]{KillipVisan2013}, one can select $N:I_c\to(0,\infty)$ and $x:I_c\to\R^3$ so that
\[
 \left\{
 N(t)^{-1/2}u_c\left(t,x(t)+\frac{\cdot}{N(t)}\right):
 t\in I_c
 \right\}
\]
is precompact in $\dot H^1$. This proves (ii). Finally, (ii) and \eqref{eq:minimal-two-sided-blowup} allow us to apply Lemma~\ref{lem:local-constancy} and Proposition~\ref{prop:no-waste}, giving (iv).
\end{proof}

One-sided normalization also requires upgrading a sequentially available scale lower bound to a modulation choice on the full lifespan.

\begin{lemma}[Selection with a scale lower bound]\label{lem:lower-bounded-modulation-selection}
Let $I\subset\R$ be an interval and $v:I\to\dot H^1(\R^3)$ satisfy $\inf_{s\in I}\|v(s)\|_{\dot H^1}>0$. Let $\mathcal K\subset\dot H^1(\R^3)$ be compact with $0\notin\mathcal K$, and let $N_0>0$. Suppose that for every sequence $s_n\in I$, one can pass to a subsequence, still denoted $s_n$, and find $N_n\ge N_0$, $y_n\in\R^3$, and $h\in\mathcal K$ such that
\begin{equation}\label{eq:modulation-selection-sequential}
 N_n^{-1/2}v\left(s_n,y_n+\frac{\cdot}{N_n}\right)
 \longrightarrow h
 \quad\text{in }\dot H^1.
\end{equation}
Then there exist functions
\[
 N_v:I\to[N_0,\infty),
 \qquad
 x_v:I\to\R^3,
\]
such that
\begin{equation}\label{eq:modulation-selected-orbit}
 \left\{
 N_v(s)^{-1/2}v\left(s,x_v(s)+\frac{\cdot}{N_v(s)}\right):s\in I
 \right\}
 \subset\mathcal K.
\end{equation}
In particular, $v$ is almost periodic modulo scaling and translation, and the selected frequency scale satisfies $N_v(s)\ge N_0$ uniformly.
\end{lemma}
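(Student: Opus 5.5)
The proof works at each fixed time separately, applying the hypothesis \eqref{eq:modulation-selection-sequential} only to constant sequences. Fix $s\in I$ and take $s_n\equiv s$. After passing to a subsequence we obtain $N_n\ge N_0$, $y_n\in\R^3$ and $h\in\mathcal K$ with $g_nv(s)\to h$ in $\dot H^1$, where $g_nf:=N_n^{-1/2}f(y_n+\cdot/N_n)$. The goal is to show that $h$ lies exactly in the scaling--translation orbit of $v(s)$, namely $h=N^{-1/2}v(s,y+\cdot/N)$ for some $N\ge N_0$ and $y\in\R^3$. We then set $N_v(s):=N$ and $x_v(s):=y$, choosing one such pair for each $s$ (axiom of choice; no measurability or continuity of $N_v,x_v$ is needed). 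This gives \eqref{eq:modulation-selected-orbit}. Almost periodicity then follows immediately, since $\mathcal K$ is compact, and the bound $N_v\ge N_0$ holds by construction.

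To identify $h$, split according to the behaviour of the parameters $(N_n,y_n)$. The lower bound $N_n\ge N_0$ already rules out degeneration of the scale toward $0$.

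\emph{Bounded parameters.} If $N_n$ and $|y_n|$ are bounded along a subsequence, pass to a further subsequence with $N_n\to N\in[N_0,\infty)$ and $y_n\to y$. The map $(\lambda,z,f)\mapsto \lambda^{-1/2}f(z+\cdot/\lambda)$ is jointly continuous from $(0,\infty)\times\R^3\times\dot H^1$ into $\dot H^1$. This is checked by approximating $f$ in $\dot H^1$ by $C_c^\infty$ functions and using that each map is a $\dot H^1$ isometry. Hence $g_nv(s)\to N^{-1/2}v(s,y+\cdot/N)$, and by uniqueness of limits this equals $h$.

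\emph{Unbounded parameters.} Otherwise, along a subsequence either $N_n\to\infty$, or $N_n$ stays bounded while $|y_n|\to\infty$. In this case I claim $g_nv(s)\rightharpoonup0$ weakly in $\dot H^1$. Combined with the strong convergence $g_nv(s)\to h$, this forces $h=0$. That contradicts either $0\notin\mathcal K$ or, equivalently, $\|h\|_{\dot H^1}=\|v(s)\|_{\dot H^1}>0$ (isometry together with $\inf_I\|v\|_{\dot H^1}>0$). So only the bounded case can occur.

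The weak vanishing is the only step requiring genuine verification, and I expect it to be the main obstacle. By isometry and density it suffices to take $f:=v(s)$ and a test function $\varphi$ both in $C_c^\infty$, and to show $\langle\nabla g_nf,\nabla\varphi\rangle=\langle g_nf,-\Delta\varphi\rangle\to0$. When $N_n\to\infty$, the function $g_nf$ is supported in a ball of radius $O(N_n^{-1})$ and has sup norm $N_n^{1/2}\|f\|_\infty$. Hence its $L^1$ norm is $O(N_n^{1/2}\cdot N_n^{-3})\to0$, and the pairing with the bounded function $\Delta\varphi$ tends to $0$. When $N_n$ is bounded and $|y_n|\to\infty$, the support of $g_nf$ is a ball of bounded radius whose center escapes to infinity. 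It is therefore eventually disjoint from $\supp\varphi$, and the pairing vanishes. This establishes the claim and completes the proof.
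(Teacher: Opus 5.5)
Your strategy is the paper's. You apply the hypothesis to the constant sequence $s_n\equiv s$, and use weak vanishing under escaping parameters (the paper's ``properness'' of the scaling--translation action, with $\lambda_n=N_n^{-1}\le N_0^{-1}$) to confine $(N_n,y_n)$ to a compact set. Strong continuity of the action then gives $h=N^{-1/2}v(s,y+\cdot/N)$ with $N\ge N_0$, and you choose one such pair for each $s$. The case split, the joint continuity, and the translation case of the weak-vanishing check are all fine.

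The step that fails as written is the weak-vanishing check when $N_n\to\infty$, because you have inverted the scaling. With your definition $g_nf(x)=N_n^{-1/2}f(y_n+x/N_n)$ and $\supp f\subset B(0,\rho)$:
\begin{itemize}
\item $g_nf$ is supported in $B(-N_ny_n,N_n\rho)$, a ball of radius of order $N_n$;
\item its sup norm is $N_n^{-1/2}\|f\|_\infty$.
\end{itemize}
Normalizing by a large frequency scale spreads the function out rather than concentrating it. Hence $\|g_nf\|_1=N_n^{5/2}\|f\|_1\to\infty$, and the $L^1$--$L^\infty$ pairing you wrote does not tend to zero.

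The repair is to pair the other way:
\[
 |\langle g_nf,\Delta\varphi\rangle|\le\|g_nf\|_\infty\|\Delta\varphi\|_1\le N_n^{-1/2}\|f\|_\infty\|\Delta\varphi\|_1\to0,
\]
uniformly in $y_n$. Equivalently, move $g_n$ onto the test function by unitarity on $\dot H^1$. Your concentration computation then applies verbatim to $g_n^{-1}\varphi(x)=N_n^{1/2}\varphi(N_n(x-y_n))$, whose $L^1$ norm is $N_n^{-5/2}\|\varphi\|_1$, paired against $\Delta f\in L^\infty$. With this correction the proof is complete and coincides with the paper's.
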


\begin{proof}
Fix $s\in I$ and apply the hypothesis to the constant sequence $s_n=s$. Put $\lambda_n:=N_n^{-1}$, so $0<\lambda_n\le N_0^{-1}$, and write \eqref{eq:modulation-selection-sequential} as
\[
 G_{\lambda_n,y_n}^{-1}v(s)\longrightarrow h\ne0
 \quad\text{in }\dot H^1.
\]
The standard properness property of the critical scaling--translation group says that, for fixed nonzero $f\in\dot H^1$, if $\lambda_n\to0$, $\lambda_n\to\infty$, or $|y_n|\to\infty$ while $\lambda_n$ remains in a compact subset of $(0,\infty)$, then $G_{\lambda_n,y_n}^{-1}f\rightharpoonup0$ in $\dot H^1$. This follows first for $C_c^\infty$ functions by a change of variables and then for all $\dot H^1$ functions by density. Since the strong limit above is nonzero, the parameters $(\lambda_n,y_n)$ remain in a compact subset of $(0,\infty)\times\R^3$. After passing to a subsequence,
\[
 \lambda_n\to\lambda_s\in(0,N_0^{-1}],
 \qquad
 y_n\to x_s\in\R^3.
\]
Strong continuity of the group action in the parameters gives
\[
 G_{\lambda_s,x_s}^{-1}v(s)=h\in\mathcal K.
\]
Set $N_v(s):=\lambda_s^{-1}\ge N_0$, and choose one such pair $(N_v(s),x_v(s))$ for every $s\in I$. This gives \eqref{eq:modulation-selected-orbit}; since the orbit lies in the compact set $\mathcal K$, its closure is compact.
\end{proof}

The selection lemma permits us to reselect a one-sided normalized representative of the minimal compactness class.

\begin{proposition}[One-sided normalization]\label{prop:one-sided-selection}
If below-threshold scattering fails, then, after applying time reversal if necessary, one may reselect a nonzero almost-periodic strong solution of energy $E_c$,
\[
 u_c:[0,T_+)\times\R^3\to\C,
\]
such that $S_{[0,T_+)}(u_c)=\infty$ and exactly one of the following alternatives holds:
\begin{equation}\label{eq:two-normalized-cases}
 T_+<\infty,
 \qquad\text{or}\qquad
 T_+=\infty\ \text{and}\ N(t)\ge1
 \quad(t\ge0).
\end{equation}
The reselected solution remains on the stable below-threshold branch and retains almost periodicity, local constancy, and the no-waste Duhamel formula.
\end{proposition}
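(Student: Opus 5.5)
Start from the minimal critical element $u_c:I_c=(T_-,T_+)\to\dot H^1$ of Theorem~\ref{thm:minimal-element}. It is nonzero, has energy $E_c$, lies on the stable branch, is almost periodic with some $(N(t),x(t))$, and satisfies $S_{(T_-,0]}(u_c)=S_{[0,T_+)}(u_c)=\infty$. The proof splits into cases according to the lifespan endpoints.

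\emph{Finite endpoint.} If $T_+<\infty$, we keep $u_c$ restricted to $[0,T_+)$. If instead $T_-> -\infty$, we apply time reversal $u(t,x)\mapsto\overline{u(-t,x)}$. This preserves the equation, energy, the stable branch, almost periodicity (with the same $N$ and $x$), and the scattering size, and it reduces to the first alternative of \eqref{eq:two-normalized-cases}. Local constancy and the no-waste formula then follow from Lemma~\ref{lem:local-constancy} and Proposition~\ref{prop:no-waste}, since $S_{[0,T_+)}=\infty$ excludes forward scattering.

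\emph{Global case.} Now suppose $I_c=\R$. We want a new element whose scale is bounded below on a forward half-line. Since the orbit is nonzero and compact, Corollary~\ref{cor:gradient-nondegenerate} keeps it away from $0$. The standard rescaling trick of Killip--Visan applies here.
\begin{itemize}[leftmargin=1.5em]
 \item If $\inf_{t\ge0}N(t)>0$, rescale by a constant factor so that $N(t)\ge1$ on $[0,\infty)$.
 \item Likewise, if $\inf_{t\le0}N(t)>0$, time-reverse and then rescale.
 \item Otherwise, for each $T$ choose $t_T\in[-T,T]$ nearly minimizing $N$ on $[-T,T]$, say $N(t_T)\ge\frac12\inf_{[-T,T]}N$. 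Since $N$ takes arbitrarily small values at both ends, we may pick $T_n$ so that the interval around $t_n:=t_{T_n}$ becomes long in rescaled time in at least one direction. Concretely, $N(t_n)^2(T_n-t_n)\to\infty$ or $N(t_n)^2(t_n+T_n)\to\infty$, and after time reversal we may assume the former.
\end{itemize}

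Rescale $v_n(s,y):=N(t_n)^{-1/2}u_c(t_n+s/N(t_n)^2,\,x(t_n)+y/N(t_n))$. By precompactness, $v_n(0)\to v_0$ in $\dot H^1$ with $v_0\in\overline{\mathcal K_u}\setminus\{0\}$, and $E(v_0)=E_c$ with the gradient gap preserved. Let $v$ be the maximal solution with datum $v_0$. Two properties must be checked.

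First, $v$ blows up in both directions, i.e.\ $S_{[0,\sup I_v)}(v)=S_{(\inf I_v,0]}(v)=\infty$. Otherwise the stability estimate in Proposition~\ref{prop:local-theory}(iii) would transfer finite scattering size to $u_c$ on a half-line, contradicting the two-sided divergence of $S(u_c)$. Consequently $v$ is again a minimal critical element, so it is almost periodic by the Palais--Smale argument of Theorem~\ref{thm:minimal-element}.

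Second, $v$ has scale bounded below on $[0,\sup I_v)$. For each $s$ in the forward lifespan, $v(s)$ is a strong $\dot H^1$-limit of $v_n(s)$ by continuous dependence. Here $v_n(s)$ is a modulation of $u_c$ at time $t_n+s/N(t_n)^2\in[-T_n,T_n]$ for large $n$, at relative scale $N(t)/N(t_n)\ge\frac12$. Thus every sequence $s_k$ admits, after diagonal extraction, modulations with $N_k\ge c>0$ converging to an element of the compact set $\overline{\mathcal K_u}$, which does not contain $0$ by Corollary~\ref{cor:gradient-nondegenerate}. Lemma~\ref{lem:lower-bounded-modulation-selection} then produces $N_v\ge c$ on the forward lifespan. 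If $\sup I_v<\infty$ we are in the first alternative of \eqref{eq:two-normalized-cases}; otherwise a constant rescaling gives $N_v\ge1$.

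The main obstacle is this last limiting step. We must justify that modulation parameters converging along the diagonal sequence stay in a compact set, so that the limit lands in $\overline{\mathcal K_u}$ with the scale bound kept. We must also handle the fact that $v_n$ is defined only on the rescaled interval $[-N(t_n)^2(T_n+t_n),\,N(t_n)^2(T_n-t_n)]$, which exhausts the forward lifespan of $v$. Properness of the group action, as used in the proof of Lemma~\ref{lem:lower-bounded-modulation-selection}, handles the first point, and the choice of $t_n$ handles the second. Finally, local constancy and the no-waste formula for the reselected solution follow from Lemma~\ref{lem:local-constancy} and Proposition~\ref{prop:no-waste}.
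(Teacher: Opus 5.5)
Your finite-endpoint case, the two constant-rescaling subcases, and your observation that the two-sided divergence of $S(u_c)$ for a \emph{global} $u_c$ forces $S_{[0,\sup I_v)}(v)=\infty$ for any rescaled limit are all sound. The last point is actually simpler than the paper's bookkeeping. There is, however, a genuine gap in the third global subcase, at the claim that $T_n$ can be chosen with $N(t_n)^2(T_n-t_n)\to\infty$ or $N(t_n)^2(T_n+t_n)\to\infty$. (A smaller point: the near-minimizing condition must read $N(t_T)\le2\inf_{[-T,T]}N$. The inequality you wrote is vacuous, and the reverse one is what gives your later bound $N(t)/N(t_n)\ge\frac12$.) Nothing available at this stage rules out a scale $N(t)\simeq\langle t\rangle^{-1/2}$. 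Such a scale is compatible with local constancy, and also with two-sided divergence of $S(u_c)$, since $[1,T]$ contains $\simeq\log T$ characteristic intervals. For this scale every near-minimizer on $[-T,T]$ has $|t_T|\gtrsim T$ and $N(t_T)^2\lesssim T^{-1}$. Hence both rescaled half-windows have length $O(1)$ for every choice of $T_n$. Your second step then breaks. For $s$ beyond $\beta=\lim N(t_n)^2(T_n-t_n)$, the function $v_n(s)$ comes from times of $u_c$ outside $[-T_n,T_n]$, where no bound $N\ge\frac12N(t_n)$ is available. So the hypothesis of Lemma~\ref{lem:lower-bounded-modulation-selection} cannot be verified on $[0,\sup I_v)$. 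This is not merely technical: in this scenario, with $t_n$ near $T_n$, the limit would have forward scale decaying to zero and would blow up at a finite negative time. The assertion in your ``main obstacle'' paragraph, that the choice of $t_n$ makes the window exhaust the forward lifespan of $v$, is exactly the unproved and in general false point.

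The missing idea is to control what happens outside the window by the scale-invariant scattering size rather than by rescaled time length. The paper picks the side of $t_n$ on which the window carries scattering size $\ge n$, and then uses stability. If the rescaled length $\beta_n$ on that side converges to $\beta<\infty$, stability gives $T_+(v)\le\beta$. So the forward lifespan of $v$ always lies inside the limiting window, where the relative bound $\ge\frac12$ holds. When $\beta<\infty$, this also forces the finite-time alternative, which needs no scale bound. In your setup the repair is short. Pass to a subsequence along which both half-window lengths converge to $\beta_\pm\in[0,\infty]$. If both are finite, then $S_{[-T_n,T_n]}(u_c)\to\infty$ and stability forbid $v$ from existing with finite scattering size on any neighborhood of $[-\beta_-,\beta_+]$. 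Hence $v$ has a finite endpoint, and time reversal puts you in the first alternative. If one of them is infinite, reverse time so that it is the forward one, and your argument then goes through as written.
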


\begin{proof}
Start with the minimal element $u_*:I_*\times\R^3\to\C$ from Theorem~\ref{thm:minimal-element}, together with fixed almost-periodic parameters $N_*:I_*\to(0,\infty)$ and $x_*:I_*\to\R^3$. Divergence of the scattering size toward both endpoints provides compact intervals $[a_n,b_n]\Subset I_*$ such that
\[
 S_{[a_n,b_n]}(u_*)\ge2n.
\]
Local constancy gives a positive lower bound for $N_*$ on every compact time interval. Choose $t_n\in[a_n,b_n]$ satisfying
\[
 N_*(t_n)\le2\inf_{t\in[a_n,b_n]}N_*(t).
\]
Additivity of the tenth power of the scattering size gives
\begin{equation}\label{eq:selection-one-large-side}
 \max\bigl\{S_{[a_n,t_n]}(u_*),S_{[t_n,b_n]}(u_*)\bigr\}
 \ge2^{-1/10}S_{[a_n,b_n]}(u_*)\ge n.
\end{equation}
After passing to a subsequence, the same side realizes \eqref{eq:selection-one-large-side} for every $n$; if it is the left side, replace $u_*$ by its time reversal $u(t,x)\mapsto\overline{u(-t,x)}$. Thus we may assume
\[
 S_{[t_n,b_n]}(u_*)\ge n,
 \qquad
 N_*(t)\ge\frac12N_*(t_n)
 \quad(t_n\le t\le b_n).
\]

Let $N_n:=N_*(t_n)$, $x_n:=x_*(t_n)$, and define
\begin{align*}
 v_n(s,y)
 &:=N_n^{-1/2}
 u_*\left(t_n+\frac{s}{N_n^2},x_n+\frac{y}{N_n}\right),\\
 \beta_n&:=N_n^2(b_n-t_n).
\end{align*}
Then
\begin{equation}\label{eq:selection-rescaled-properties}
 v_n(0)\in\mathcal K_{u_*},
 \qquad
 S_{[0,\beta_n]}(v_n)\ge n,
 \qquad
 N_{v_n}(s):=\frac{N_*(t_n+s/N_n^2)}{N_n}\ge\frac12
 \quad(0\le s\le\beta_n).
\end{equation}
By precompactness of $\mathcal K_{u_*}$, after a subsequence
\[
 v_n(0)\longrightarrow v_0
 \quad\text{in }\dot H^1.
\]
Let $v:[0,T_+(v))\times\R^3\to\C$ be the maximal-lifespan solution with $v(0)=v_0$. Continuity and scaling invariance of the energy give
\[
 E(v)=E_c,
 \qquad
 \|\nabla v(t)\|_2<\|\nabla W\|_2.
\]
After another subsequence, $\beta_n\to\beta\in(0,\infty]$. Uniform local theory on the compact initial-data set gives $\delta_*>0$ and $C_*<\infty$ such that
\[
 [0,\delta_*]\subset I(v_n),
 \qquad
 S_{[0,\delta_*]}(v_n)\le C_*
 \quad(n\ge1).
\]
By \eqref{eq:selection-rescaled-properties}, $\beta_n>\delta_*$ whenever $n>C_*$, so $\beta\ge\delta_*>0$. For every
\[
 0<T<\min\{\beta,T_+(v)\},
\]
critical stability gives
\[
 [0,T]\subset I(v_n)\quad(n\gg_T1),
 \qquad
 v_n\longrightarrow v
 \quad\text{in }C([0,T];\dot H^1(\R^3))\cap L_{t,x}^{10}([0,T]\times\R^3).
\]

We first compare $T_+(v)$ with $\beta$. If $\beta<\infty$ and $T_+(v)>\beta$, choose $\delta>0$ so that
\[
 [0,\beta+2\delta]\Subset I(v),
 \qquad
 S_{[0,\beta+2\delta]}(v)<\infty.
\]
For large $n$, $\beta_n<\beta+\delta$. Long-time stability for $v_n$ around $v$ on $[0,\beta+\delta]$ then gives
\[
 \sup_{n\gg1}S_{[0,\beta_n]}(v_n)<\infty,
\]
contradicting \eqref{eq:selection-rescaled-properties}. Hence
\begin{equation}\label{eq:selection-lifespan-upper}
 T_+(v)\le\beta
 \qquad(\beta<\infty).
\end{equation}
If $T_+(v)<\infty$, the blow-up criterion yields $S_{[0,T_+(v))}(v)=\infty$. If $T_+(v)=\beta=\infty$ but $S_{[0,\infty)}(v)<\infty$, long-time stability on the entire half-line and $\beta_n\to\infty$ again imply
\[
 \sup_{n\gg1}S_{[0,\beta_n]}(v_n)<\infty,
\]
a contradiction. Thus, with $T_+:=T_+(v)$,
\begin{equation}\label{eq:selection-limit-blowup}
 S_{[0,T_+)}(v)=\infty.
\end{equation}

It remains to transfer almost periodicity and the scale lower bound. For $0\le s\le\beta_n$, set
\begin{align*}
 \mathcal N_n(s)
 &:=\frac{N_*(t_n+s/N_n^2)}{N_n}\ge\frac12,\\
 y_n(s)
 &:=N_n\bigl(x_*(t_n+s/N_n^2)-x_n\bigr).
\end{align*}
Then
\begin{equation}\label{eq:selection-common-compactness-class}
 \mathcal N_n(s)^{-1/2}
 v_n\left(s,y_n(s)+\frac{\cdot}{\mathcal N_n(s)}\right)
 \in\mathcal K_{u_*}.
\end{equation}
Let $s_k\in[0,T_+)$ be arbitrary. If $\beta<\infty$, then \eqref{eq:selection-lifespan-upper} gives $s_k<T_+\le\beta$; if $\beta=\infty$, this is automatic. We may choose $n_k\to\infty$ so that
\begin{equation}\label{eq:selection-diagonal-approximation}
 s_k<\beta_{n_k},
 \qquad
 \|v_{n_k}(s_k)-v(s_k)\|_{\dot H^1}\le k^{-1}.
\end{equation}
Set
\[
 \widetilde N_k:=\mathcal N_{n_k}(s_k)\ge\frac12,
 \qquad
 \widetilde x_k:=y_{n_k}(s_k).
\]
Since scaling and translation preserve the $\dot H^1$ distance, \eqref{eq:selection-common-compactness-class}--\eqref{eq:selection-diagonal-approximation} give
\[
 \operatorname{dist}_{\dot H^1}\!\left(
 \widetilde N_k^{-1/2}v\left(s_k,\widetilde x_k+\frac{\cdot}{\widetilde N_k}\right),
 \mathcal K_{u_*}\right)\le k^{-1}.
\]
Because $E(h)=E_c>0$, the compact set $\mathcal K_{u_*}$ does not contain zero, and below-threshold coercivity gives
\[
 \inf_{0\le s<T_+}\|v(s)\|_{\dot H^1}>0.
\]
For every time sequence, the last distance estimate and compactness of $\mathcal K_{u_*}$ permit a further subsequence whose normalized functions converge strongly to an element of $\mathcal K_{u_*}$. Applying Lemma~\ref{lem:lower-bounded-modulation-selection} with $I=[0,T_+)$, $N_0=1/2$, and $\mathcal K=\mathcal K_{u_*}$ shows that $v$ is almost periodic modulo scaling and translation and that its scale can be chosen so that
\begin{equation}\label{eq:selection-limit-scale-lower}
 N_v(s)\ge\frac12
 \qquad(0\le s<T_+).
\end{equation}
This is the one-sided rescaling argument preceding \cite[Theorem~1.8]{KillipVisan2012}; see also \cite[Section~5.3]{KillipVisan2013}.

If $T_+<\infty$, we are in the first alternative of \eqref{eq:two-normalized-cases}. If $T_+=\infty$, apply one fixed scaling to normalize \eqref{eq:selection-limit-scale-lower} to $N_v(s)\ge1$. Finally, \eqref{eq:selection-limit-blowup} and almost periodicity allow the use of Proposition~\ref{prop:no-waste}, and Lemma~\ref{lem:local-constancy} also applies.

This procedure reselects a representative from the minimal compactness class. It does not assert that an arbitrarily prescribed minimal element can be made to satisfy a global frequency lower bound by one fixed scaling. The selection uses only symmetries, compactness, and critical stability and is independent of the defocusing sign.
\end{proof}

\subsection{Consequences of long-time Strichartz estimates}

The exclusion of rapid cascades uses two consequences of the long-time Strichartz recursion.

\begin{proposition}[Long-time Strichartz consequences]\label{prop:LTS}
Let $u:[0,T_{\max})\times\R^3\to\C$ be a maximal-lifespan almost-periodic strong solution with frequency scale $N:[0,T_{\max})\to(0,\infty)$. Assume
\[
 N(t)\ge1,
 \qquad
 S_{[0,T_{\max})}(u)=\infty,
\]
and that the no-waste Duhamel formula in Proposition~\ref{prop:no-waste} holds. For $0<T<T_{\max}$, set
\[
 \mathsf K([0,T]):=\int_0^T N(t)^{-1}\dd t,
 \qquad
 \mathsf K([0,T_{\max})):=\sup_{0<T<T_{\max}}\mathsf K([0,T]).
\]
If
\[
 \mathsf K([0,T_{\max}))<\infty,
\]
then
\begin{equation}\label{eq:LTS-mass-recovery}
 u\in L_t^\infty L_x^2([0,T_{\max})\times\R^3),
\end{equation}
and there exists $C_u<\infty$ such that, for every $0<M\le1$,
\begin{equation}\label{eq:absolute-low-mass}
 \|P_{\le M}u\|_{L_t^\infty L_x^2([0,T_{\max})\times\R^3)}
 \le C_uM^{1/2}.
\end{equation}
\end{proposition}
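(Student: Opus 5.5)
The plan is to follow the Dodson/Killip--Visan long-time Strichartz scheme in the form used for the rapid-cascade exclusion. Fix $0<T<T_{\max}$ and decompose $[0,T]$ into the characteristic intervals $J_k$ of Lemma~\ref{lem:local-constancy}. On each $J_k$ the scattering size is $\simeq\eta_{\mathrm{ch}}$ and $|J_k|\simeq N_k^{-2}$, so
\[
 \mathsf K([0,T])\simeq\sum_{J_k\subset[0,T]}N_k^{-1}.
\]
I will first establish the long-time Strichartz estimate
\[
 \|P_{\le M}u\|_{U^2_\Delta([0,T])}\lesssim_u 1+\Bigl(\frac{\mathsf K([0,T])}{M}\Bigr)^{1/2}
 \quad\text{(in } \dot H^1\text{-normalized form)},
\]
for dyadic $M\le1$, uniformly in $T$. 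The method is induction on $M$ via a bilinear Strichartz estimate. The high--low interactions $P_{\le M}(u_{\ge M}^{\,\cdot}\,u_{\le \epsilon M}^{\,4})$ are estimated by the bilinear gain $(M'/M)^{1/2}$. The terms where all factors have frequency $\lesssim M$ are absorbed by the bootstrap, using the smallness of $\|u_{\le \epsilon N(t)}\|_{\dot H^1}$ from \eqref{eq:ap-frequency}. This part is sign-independent: only $|F(u)|=|u|^5$ matters. Hence it can be imported from \cite{Dodson2019,KillipVisan2012} with the identical proof.

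The second step converts this into the low-frequency mass bound \eqref{eq:absolute-low-mass} using the no-waste formula of Proposition~\ref{prop:no-waste}. For $t\in[0,T_{\max})$ and a low frequency $M$, write
\[
 P_{\le M}u(t)=-\ii\,w\text{-}\lim_{T\uparrow T_{\max}}\int_t^T e^{\ii(t-s)\Delta}P_{\le M}\bigl(|u|^4u\bigr)(s)\dd s,
\]
and estimate the $L^2$ norm by duality. The dual Strichartz estimate gives
\[
 \|P_{\le M}u(t)\|_2\lesssim\|P_{\le M}(|u|^4u)\|_{L_t^2L_x^{6/5}([t,T_{\max}))}\cdot M^{-1}
\]
after trading one derivative. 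Split the nonlinearity into pieces with at least one factor at frequency $\gtrsim M$ and pieces that are entirely low. The first type is controlled by the long-time Strichartz bound together with the bilinear estimate. Summing over characteristic intervals produces a factor $\mathsf K([0,T_{\max}))<\infty$, uniform in $T$, while the weak limit is harmless by lower semicontinuity. The second type is bounded by a small multiple of the quantity itself and is absorbed. Running this as a bootstrap in $A(M):=\sup_t M^{-1/2}\|P_{\le M}u(t)\|_2$ over dyadic $M\le1$ should give $A(M)\lesssim_u1$, which is \eqref{eq:absolute-low-mass}.

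Finally, \eqref{eq:LTS-mass-recovery} follows by summing. The bound $\|P_{>1}u\|_2\lesssim\|\nabla u\|_2$ is uniform by Lemma~\ref{lem:coercivity}, and $\sum_{M\le1}\|P_Mu\|_2\lesssim\sum_{M\le1}M^{1/2}<\infty$.

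The main obstacle is the absorption in the second step. The all-low-frequency piece must be small uniformly in time, including near $t=0$ where $N(t)$ may only be $\ge1$. If $M$ is not small relative to $N(t)$, the frequency compactness \eqref{eq:ap-frequency} provides no smallness. To handle this, I would first treat $M\le\epsilon_0$ with $\epsilon_0$ chosen from $C(\eta)$, using $N(t)\ge1$. The range $\epsilon_0<M\le1$ then follows trivially from Bernstein, $\|P_{\le M}u\|_2\lesssim\epsilon_0^{-1}\|\nabla u\|_2$, at the cost of enlarging $C_u$.
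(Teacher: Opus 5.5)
\textbf{Verdict: genuine gap in Step~2.} Your overall architecture follows the Killip--Visan route that the paper itself simply cites (their Theorem~4.1, Lemma~5.1 and the proof of Theorem~5.2), after checking that the sign never enters: a long-time Strichartz estimate on unions of characteristic intervals, then the no-waste formula of Proposition~\ref{prop:no-waste} to recover low-frequency mass, then dyadic summation. Your sign-independence remark matches the paper's.

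\textbf{The long-time input is mis-scaled.} On a characteristic interval $\int_{J_k}N(t)^{-1}\dd t\simeq|J_k|N_k^{-1}\simeq N_k^{-3}$ (compare \eqref{eq:K-contribution-lower}). So $\mathsf K\simeq\sum_kN_k^{-3}$, not $\sum_kN_k^{-1}$. Under $u\mapsto u_\lambda$ one has $\mathsf K\mapsto\lambda^{-3}\mathsf K$, while frequency $M$ for $u_\lambda$ corresponds to $M/\lambda$ for $u$, so the only scale-invariant combination is $M^3\mathsf K$. The estimate in this energy-critical setting reads $\|\nabla P_{\le M}u\|_{L_t^2L_x^6(I\times\R^3)}\lesssim_u 1+M^{3/2}\mathsf K(I)^{1/2}$. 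Your bound $1+(\mathsf K/M)^{1/2}$ is the mass-critical form transplanted. For $M\le1$ it is weaker than the correct one, and it grows as $M\to0$. It therefore gives neither the $M^{1/2}$ decay in \eqref{eq:absolute-low-mass} nor the smallness your second step needs.

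\textbf{The absorption in Step~2 fails as written, for two reasons.}

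First, you bootstrap in $A(M)=\sup_tM^{-1/2}\|P_{\le M}u(t)\|_2$ and absorb ``a small multiple of the quantity itself''. But $u(t)$ is a priori only in $\dot H^1$, so $A(M)$ may be $+\infty$; its finiteness is exactly the conclusion \eqref{eq:LTS-mass-recovery}. An inequality $A\le C+\frac12A$ proves nothing in that case.

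Second, the all-low piece of $P_{\le M}(|u|^4u)$ is measured in $L_t^2L_x^{6/5}$ over $[t,T_{\max})$, a range containing infinitely many characteristic intervals. What must be small is therefore a spacetime norm such as $\|P_{\le M}u\|_{L_t^8L_x^{12}([t,T_{\max})\times\R^3)}$. Your proposed fix, pointwise-in-time smallness of $\|\nabla P_{\le\epsilon_0}u(t)\|_2$ from \eqref{eq:ap-frequency} and $N(t)\ge1$, does not give this by itself. It does give it after interpolating against a bound on $\|\nabla P_{\le M}u\|_{L_t^2L_x^6([0,T_{\max})\times\R^3)}$ that stays bounded as $M\to0$. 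The correctly scaled estimate supplies such a bound ($\lesssim_u1+\mathsf K^{1/2}$ for $M\le1$); yours does not.

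\textbf{How to repair it.} Either of two routes works:
\begin{itemize}
\item Run the recursion on an a priori finite $\dot H^1$-level quantity, e.g.\ $\sup_t\|\nabla P_{\le M}u(t)\|_2$, with target $\lesssim_u M^{3/2}$ (equivalently $\|P_Mu\|_{L_t^\infty L_x^2}\lesssim_uM^{1/2}$). Only then sum the dyadic pieces, identifying the limit by the truncation and weak-limit argument of Lemma~\ref{lem:finite-low}.
\item Quote the Killip--Visan results directly and verify sign-independence, as the paper does.
\end{itemize}

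\textbf{A smaller error in the range $\epsilon_0<M\le1$.} The bound $\|P_{\le M}u\|_2\lesssim\epsilon_0^{-1}\|\nabla u\|_2$ is false, because $P_{\le M}$ contains all frequencies below $\epsilon_0$. You must split off $P_{\le\epsilon_0}u$ and use the bound from the first range $M\le\epsilon_0$.
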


\begin{proof}
We explain the source of Proposition~\ref{prop:LTS} and why the long-time frequency recursion, finite-mass recovery, and absolute low-frequency estimate are independent of the focusing/defocusing sign. On unions of characteristic intervals, \cite[Theorem~4.1]{KillipVisan2012} establishes the long-time Strichartz recursion; \cite[Lemma~5.1]{KillipVisan2012} combines it with the no-waste Duhamel formula to recover finite mass; and \eqref{eq:absolute-low-mass} is the absolute low-frequency estimate obtained in the proof of \cite[Theorem~5.2]{KillipVisan2012} before conservation of mass is used to exclude the cascade. For $T<T_{\max}$, write $\mathsf K_T:=\mathsf K([0,T])$. The cited results first give a recursion on $[0,T]$ uniform in $T<T_{\max}$ under $\sup_T\mathsf K_T<\infty$, then recover finite mass, and finally yield
\begin{equation}\label{eq:LTS-source-chain}
 \sup_{t<T_{\max}}\|u(t)\|_2<\infty,
 \qquad
 \sup_{t<T_{\max}}\|P_{\le M}u(t)\|_2\lesssim_u M^{1/2}
 \quad(0<M\le1).
\end{equation}

For the focusing adaptation, set
\[
 F_\mu(u):=\mu|u|^4u,
 \qquad \mu\in\{-1,+1\}.
\]
In the long-time recursion, double-Duhamel mass recovery, and low-frequency estimate, the nonlinearity enters only through Strichartz, maximal Strichartz, Bernstein, H\"older, and triangle inequalities. For every relevant norm $X$,
\[
 P_MF_\mu(u)=\mu P_M(|u|^4u),
 \qquad
 \|P_MF_\mu(u)\|_X
 =\|P_M(|u|^4u)\|_X,
\]
and the same is true for differentiated and frequency-decomposed terms. Thus $\mu$ does not enter the recursion constants. The required uniform $\dot H^1$ bound follows from Lemma~\ref{lem:coercivity}. The favorable defocusing interaction-Morawetz sign is used only later in \cite{KillipVisan2012} to exclude the quasi-soliton channel; it is not used in \eqref{eq:LTS-source-chain}. Therefore \eqref{eq:LTS-mass-recovery}--\eqref{eq:absolute-low-mass} hold on the present focusing below-threshold branch.
\end{proof}

\section{The finite-time channel}

We exclude the channel $\mathsf{FT}$. Let $u:[0,T_+)\times\R^3\to\C$ be a one-sided normalized minimal critical element with $\mathfrak T(u)=\mathsf{FT}$. By Theorem~\ref{thm:minimal-element} and Proposition~\ref{prop:one-sided-selection}, Proposition~\ref{prop:no-waste} applies automatically. We first prove a slightly stronger statement whose explicit hypotheses are only the below-threshold bound, a finite endpoint, and the no-waste formula.

A finite-time endpoint first yields decay of the mass in every fixed frequency band.

\begin{lemma}[Frequency-band mass estimate]\label{lem:finite-band}
There exists $C_u<\infty$ such that, for every dyadic $M>0$ and $t<T_+$,
\begin{equation}\label{eq:finite-band-estimate}
 \|P_Mu(t)\|_2\le C_uM(T_+-t).
\end{equation}
\end{lemma}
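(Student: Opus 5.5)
We apply the frequency projection $P_M$ to the finite-endpoint no-waste formula \eqref{eq:no-waste-finite} and estimate the Duhamel integral in $L^2$ by duality against a test function. The approach is the standard Killip--Visan finite-time argument. Fix $t<T_+$ and dyadic $M$. Since $P_M$ is bounded on $\dot H^1$ and commutes with the propagator, weak convergence in $\dot H^1$ passes through $P_M$. Moreover, $P_M$ maps $\dot H^1$ boundedly into $L^2$ with norm $\lesssim M^{-1}$ (Bernstein). We therefore obtain
\[
 P_Mu(t)=-\ii\,w\!\!\lim_{T\uparrow T_+}\int_t^T e^{\ii(t-s)\Delta}P_M\bigl(|u(s)|^4u(s)\bigr)\dd s
 \quad\text{weakly in }L^2.
\]
By weak lower semicontinuity of the norm, it suffices to bound the $L^2$ norm of the truncated integrals uniformly in $T$.

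For the truncated integral we use Minkowski's inequality in time together with unitarity of $e^{\ii(t-s)\Delta}$ on $L^2$. This reduces matters to a pointwise-in-time bound on $\|P_M(|u|^4u)(s)\|_2$. We take the simplest route, Bernstein from $L^1$: $\|P_M F\|_2\lesssim M^{3/2}\|F\|_1$. However, $\||u|^5\|_1=\|u\|_5^5$ is not controlled by $\dot H^1$ alone in three dimensions, since $\dot H^1\subset L^6$ only. So we use instead $\|P_MF\|_2\lesssim M\|F\|_{6/5}$, which is Bernstein in the form $L^{6/5}\to L^2$ with gain $M^{3(5/6-1/2)}=M$. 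Then
\[
 \bigl\||u|^4u\bigr\|_{6/5}=\|u\|_6^5\lesssim\|\nabla u\|_2^5\lesssim_u1,
\]
and the last bound is uniform in time by Lemma~\ref{lem:coercivity}. Integrating over $s\in[t,T]$ gives $\|P_Mu(t)\|_2\le C_uM(T-t)\le C_uM(T_+-t)$, which is \eqref{eq:finite-band-estimate}.

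The only delicate point is justifying the weak limit in $L^2$ rather than in $\dot H^1$, because $u(t)$ itself need not lie in $L^2$. We handle this by pairing with $g\in C_c^\infty$ that has Fourier support in the annulus $|\xi|\sim M$. On such $g$ the $\dot H^1$ and $L^2$ pairings are comparable, so the identity holds when tested against these $g$. Duality over this class then bounds $\|P_Mu(t)\|_2$, since $P_Mu(t)\in L^2$ is frequency localized. The truncated integrals are Bochner integrals in $L^2$ because the integrand is continuous in $s$ with values in $L^2$ on $[t,T]\Subset[0,T_+)$. Beyond this, the computation is routine, and the exponent bookkeeping in Bernstein's inequality is the only step that needs checking.
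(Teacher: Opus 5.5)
Your proposal is correct and follows essentially the same route as the paper: apply $P_M$ to the finite-endpoint no-waste formula, pass to a weak $L^2$ limit, then use Minkowski, unitarity, Bernstein $L^{6/5}\to L^2$ with gain $M$, and the uniform $L^6$ bound from Sobolev and below-threshold coercivity. Your justification of the weak $L^2$ limit is a slightly cleaner version of the paper's remark that weak $\dot H^1$ and weak $L^2$ topologies agree on the band: $P_M$ is bounded from $\dot H^1$ to $L^2$, hence weak-to-weak continuous.
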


\begin{proof}
Apply $P_M$ to \eqref{eq:no-waste-finite}. Since the Fourier support of $P_M$ is contained in
\[
 \{\xi:M/4\le|\xi|\le2M\},
\]
the weak $\dot H^1$ and weak $L^2$ topologies are equivalent on this band. Weak lower semicontinuity and unitarity of the Schr\"odinger group on $L^2$ give
\begin{align*}
 \|P_Mu(t)\|_2
 &\le \liminf_{T\uparrow T_+}
 \left\|\int_t^T e^{\ii(t-s)\Delta}P_M(|u|^4u)(s)\dd s\right\|_2\\
 &\le \int_t^{T_+}\|P_M(|u|^4u)(s)\|_2\dd s.
\end{align*}
For every $F\in L^{6/5}(\R^3)$, the three-dimensional Bernstein inequality yields
\[
 \|P_MF\|_2
 \le C M^{3(5/6-1/2)}\|F\|_{6/5}
 =CM\|F\|_{6/5}.
\]
Taking $F=|u|^4u$ and using Sobolev embedding and the below-threshold energy bound, we obtain
\begin{align*}
 \|P_Mu(t)\|_2
 &\le CM\int_t^{T_+}\||u(s)|^4u(s)\|_{6/5}\dd s\\
 &=CM\int_t^{T_+}\|u(s)\|_6^5\dd s\\
 &\le C_uM(T_+-t),
\end{align*}
which is \eqref{eq:finite-band-estimate}.
\end{proof}

Summing the dyadic bands upgrades the preceding estimate to recovery of absolute low-frequency mass.

\begin{lemma}[Low-frequency mass recovery]\label{lem:finite-low}
There exists $C_u<\infty$ such that, for every $M>0$ and $t<T_+$,
\begin{equation}\label{eq:finite-low-estimate}
 \|P_{\le M}u(t)\|_2\le C_uM(T_+-t).
\end{equation}
\end{lemma}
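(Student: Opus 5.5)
The estimate follows by summing Lemma~\ref{lem:finite-band} over the dyadic bands below $M$. The point is that the factor $M'$ on the right-hand side of \eqref{eq:finite-band-estimate} decays geometrically as the band frequency $M'$ decreases. The one thing to check is that $P_{\le M}u(t)$ is actually recovered by this dyadic sum, with no leftover contribution at zero frequency.

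Fix $t<T_+$ and $M>0$. Let $M_0\in2^{\mathbb Z}$ be the dyadic number with $M\le M_0<2M$. The multiplier $\varphi(\xi/M)$ is supported in $|\xi|\le2M\le2M_0$. Hence $P_{\le M}=P_{\le M}P_{\le 2M_0}$, and since $P_{\le M}$ is a contraction on $L^2$,
\[
 \|P_{\le M}u(t)\|_2\le\|P_{\le 2M_0}u(t)\|_2 .
\]
Telescoping gives $P_{\le 2M_0}=\sum_{M'\le 2M_0,\ M'\in2^{\mathbb Z}}P_{M'}$. This holds in $L^2$ provided $P_{\le K}u(t)\to0$ in $L^2$ as $K\to0$.

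To justify that limit, I would first establish $P_{\le 2M_0}u(t)\in L^2$. Since $u(t)\in\dot H^1$, each $P_{M'}u(t)$ lies in $L^2$, and the band bounds are summable toward low frequencies. So the partial sums $\sum_{K<M'\le2M_0}P_{M'}u(t)=P_{\le 2M_0}u(t)-P_{\le K}u(t)$ are Cauchy in $L^2$ as $K\to0$. Their limit coincides with the tempered-distribution limit. Moreover $P_{\le K}u(t)\to0$ in $\mathcal S'$ modulo polynomials, and in fact in $L^6$ because $u(t)\in L^6$. Therefore the $L^2$ limit is $P_{\le 2M_0}u(t)$ itself. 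This identification of the limit is the only mildly delicate step; the rest is arithmetic.

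Applying Lemma~\ref{lem:finite-band} to each band then yields
\[
 \|P_{\le M}u(t)\|_2\le\sum_{M'\le2M_0}\|P_{M'}u(t)\|_2
 \le C_u(T_+-t)\sum_{M'\le 2M_0}M'
 =4M_0C_u(T_+-t)\le 8C_uM(T_+-t).
\]
Renaming the constant gives \eqref{eq:finite-low-estimate}.
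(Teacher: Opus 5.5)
Your proof is correct and follows the paper's route: you sum the band bounds of Lemma~\ref{lem:finite-band} over the dyadic frequencies below $M$, and you identify the limit using $P_{\le K}u(t)\to0$ in $L^6$ as $K\to0$. The differences are only technical. You sum norms by the triangle inequality, which the geometric decay in $M'$ makes sufficient, and you obtain strong $L^2$ convergence of the partial sums. The paper instead uses the Littlewood--Paley square-function estimate followed by weak compactness and lower semicontinuity. Your explicit reduction of nondyadic $M$ to $P_{\le 2M_0}$ is a small added clarification.
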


\begin{proof}
Since $P_{\le M}u(t)\in L^2$ is not known a priori, begin with a finite low-frequency truncation. For dyadic $0<\varepsilon<M$, define
\[
 P_{[\varepsilon,M]}:=P_{\le M}-P_{\le\varepsilon}.
\]
Lemma~\ref{lem:finite-band}, through \eqref{eq:finite-band-estimate}, and the Littlewood--Paley square-function estimate give
\begin{align*}
 \|P_{[\varepsilon,M]}u(t)\|_2^2
 &\le C\sum_{\substack{\varepsilon/4\le N\le2M\\N\ \mathrm{dyadic}}}
 \|P_Nu(t)\|_2^2\\
 &\le C_u(T_+-t)^2\sum_{N\le2M}N^2.
\end{align*}
Since $\sum_{N\le2M}N^2\le CM^2$,
\begin{equation}\label{eq:truncated-low-uniform}
 \sup_{0<\varepsilon<M}
 \|P_{[\varepsilon,M]}u(t)\|_2
 \le C_uM(T_+-t).
\end{equation}
On the other hand, $u(t)\in\dot H^1$ implies
\[
 \|P_{\le\varepsilon}u(t)\|_6
 \le C\|\nabla P_{\le\varepsilon}u(t)\|_2
 \longrightarrow0
 \qquad(\varepsilon\downarrow0),
\]
so $P_{[\varepsilon,M]}u(t)\to P_{\le M}u(t)$ in the sense of distributions. The uniform bound \eqref{eq:truncated-low-uniform} gives weak $L^2$ compactness. Every sequence $\varepsilon_n\downarrow0$ therefore has a weakly convergent subsequence, and uniqueness of the distributional limit identifies the weak limit with $P_{\le M}u(t)$. Weak lower semicontinuity yields \eqref{eq:finite-low-estimate}.
\end{proof}

Combining low-frequency recovery with the energy control of the high-frequency tail excludes the finite-time channel.

\begin{proposition}[Exclusion of finite time]\label{prop:finite-time-exclusion}
Let $u:[0,T_+)\times\R^3\to\C$ be a nonzero below-threshold strong solution satisfying Proposition~\ref{prop:no-waste}, with $T_+<\infty$. Then there exists $C_u<\infty$ such that
\begin{equation}\label{eq:finite-time-mass-decay}
 u(t)\in L^2(\R^3),
 \qquad
 \|u(t)\|_2\le C_u(T_+-t)^{1/2}.
\end{equation}
Consequently $u\equiv0$, contradicting nontriviality. Hence the channel $\mathsf{FT}$ is empty.
\end{proposition}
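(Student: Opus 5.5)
The proof splits $u(t)=P_{\le M}u(t)+P_{>M}u(t)$ with a time-dependent cutoff $M$. Lemma~\ref{lem:finite-low} controls the low part, energy controls the high part, and $M$ is then optimized.

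For the low frequencies, Lemma~\ref{lem:finite-low} gives $P_{\le M}u(t)\in L^2$ and
\[
 \|P_{\le M}u(t)\|_2\le C_uM(T_+-t).
\]
For the high frequencies, Bernstein's inequality and the uniform $\dot H^1$ bound from Lemma~\ref{lem:coercivity} give
\[
 \|P_{>M}u(t)\|_2\lesssim M^{-1}\|\nabla u(t)\|_2\le C_uM^{-1}.
\]
Here the dyadic pieces of $P_{>M}u(t)$ are summed in $L^2$ by orthogonality, which is legitimate because they lie in $L^2$ with square-summable norms $\lesssim N^{-1}\|\nabla P_Nu\|_2$. Hence $u(t)\in L^2$ for every $t<T_+$, with
\[
 \|u(t)\|_2\le C_u\bigl(M(T_+-t)+M^{-1}\bigr).
\]
The choice $M=(T_+-t)^{-1/2}$ balances the two terms and gives \eqref{eq:finite-time-mass-decay}.

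To conclude, I would use that $u(t_0)\in H^1$ for some (indeed every) $t_0<T_+$. Proposition~\ref{prop:local-theory}(ii) then gives $u\in C_tH^1_x$ on the lifespan and conservation of mass: $\M(u(t))=\M(u(t_0))$ for all $t\in[t_0,T_+)$. Letting $t\uparrow T_+$ in \eqref{eq:finite-time-mass-decay} forces $\M(u(t_0))=0$, so $u(t_0)=0$. Uniqueness then gives $u\equiv0$, which contradicts nontriviality; alternatively one can invoke Corollary~\ref{cor:gradient-nondegenerate}. Since Theorem~\ref{thm:minimal-element} and Proposition~\ref{prop:one-sided-selection} guarantee that a minimal element in $\mathsf{FT}$ is nonzero, below threshold, and satisfies the no-waste formula, the channel $\mathsf{FT}$ is empty.

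The only mildly delicate step is the high-frequency $L^2$ bound, since a priori $u(t)$ is only in $\dot H^1$. I would avoid any global Plancherel argument on $u$ and instead apply Plancherel to $\widehat{P_{>M}u}$ directly, using $|\xi|>M$ on its support: $\int_{|\xi|\gtrsim M}|\widehat u|^2\le M^{-2}\int|\xi|^2|\widehat u|^2$. This is fine because $\widehat u$ is a locally integrable function away from the origin for $u\in\dot H^1$. The rest is routine.
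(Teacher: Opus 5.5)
Your proposal is correct and follows essentially the same route as the paper. You split at $M\simeq(T_+-t)^{-1/2}$, bound the low part by Lemma~\ref{lem:finite-low} and the high part by Plancherel on the support $|\xi|\ge M$ of the $P_{>M}$ multiplier, and then use $H^1$ persistence and conservation of mass to force $\M(u(t_0))=0$ as $t\uparrow T_+$. The only cosmetic difference is that the paper takes $M$ dyadic, which is unnecessary since Lemma~\ref{lem:finite-low} holds for every $M>0$.
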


\begin{proof}
For every $M>0$, Plancherel and $|\xi|>M$ give
\begin{equation}\label{eq:high-mass-energy}
 \|P_{>M}u(t)\|_2^2
 \le M^{-2}\|\nabla u(t)\|_2^2
 \le C_uM^{-2}.
\end{equation}
By Lemma~\ref{lem:finite-low}, specifically \eqref{eq:finite-low-estimate}, together with the triangle inequality and \eqref{eq:high-mass-energy},
\begin{equation}\label{eq:mass-two-pieces}
 \|u(t)\|_2
 \le C_u\bigl(M(T_+-t)+M^{-1}\bigr).
\end{equation}
Let $h:=T_+-t>0$ and choose dyadic $M$ such that
\begin{equation}\label{eq:dyadic-optimal-M}
 \frac12h^{-1/2}\le M\le h^{-1/2}.
\end{equation}
Substitution of the dyadic choice \eqref{eq:dyadic-optimal-M} into \eqref{eq:mass-two-pieces} yields
\begin{equation}\label{eq:mass-decay-derived}
 \|u(t)\|_2\le C_uh^{1/2}
 =C_u(T_+-t)^{1/2}.
\end{equation}
Thus $u(t)\in L^2$ and \eqref{eq:finite-time-mass-decay} holds.

Fix $t_0<T_+$. Equation \eqref{eq:mass-decay-derived} gives $u(t_0)\in\dot H^1\cap L^2=H^1$. For every $T\in(t_0,T_+)$, persistence of $H^1$ regularity and uniqueness in the critical class yield
\[
 u\in C_tH_x^1([t_0,T]\times\R^3).
\]
Conservation of mass gives $\|u(T)\|_2=\|u(t_0)\|_2$. Since $T<T_+$ is arbitrary, \eqref{eq:mass-decay-derived} implies
\[
 \|u(t_0)\|_2
 =\lim_{T\uparrow T_+}\|u(T)\|_2
 \le\lim_{T\uparrow T_+}C_u(T_+-T)^{1/2}=0.
\]
Hence $u(t_0)=0$, and uniqueness gives $u\equiv0$, a contradiction.
\end{proof}

\section{The rapid-cascade channel}

We now exclude $\mathsf{RC}$. Fix a one-sided normalized minimal critical element $u$ with $\mathfrak T(u)=\mathsf{RC}$. By Theorem~\ref{thm:minimal-element} and Proposition~\ref{prop:one-sided-selection}, $u$ automatically satisfies Proposition~\ref{prop:no-waste}, and
\begin{equation}\label{eq:N-ge-one}
 N(t)\ge1,
 \qquad
 S_{[0,\infty)}(u)=\infty,
 \qquad t\ge0.
\end{equation}

We first show that finiteness of the scale integral forces the frequency scale to diverge.

\begin{lemma}[Divergence of the scale]\label{lem:N-to-infty}
If
\begin{equation}\label{eq:finite-K-global}
 \int_0^\infty N(t)^{-1}\dd t<\infty,
\end{equation}
then
\begin{equation}\label{eq:N-to-infty}
 \lim_{t\to\infty}N(t)=\infty.
\end{equation}
\end{lemma}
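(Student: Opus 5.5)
The plan is to combine local constancy (Lemma~\ref{lem:local-constancy}) with the integrability \eqref{eq:finite-K-global}, arguing by contradiction.

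Suppose \eqref{eq:N-to-infty} fails. Then there exist $A<\infty$ and times $t_n\to\infty$ with $N(t_n)\le A$. After passing to a subsequence, we may assume $t_{n+1}-t_n\ge 1$.

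By \eqref{eq:local-constancy}, on each window
\[
 W_n:=[t_n-\delta_uN(t_n)^{-2},\,t_n+\delta_uN(t_n)^{-2}]
\]
we have $N(t)\le C_uN(t_n)\le C_uA$. The window $W_n$ lies in $[0,\infty)$ for large $n$, since $N(t_n)\ge1$ forces $\delta_uN(t_n)^{-2}\le\delta_u$ while $t_n\to\infty$. Its length is $2\delta_uN(t_n)^{-2}\ge2\delta_uA^{-2}$. Hence
\[
 \int_{W_n}N(t)^{-1}\dd t\ge\frac{2\delta_uA^{-2}}{C_uA}=:c>0 .
\]

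If $2\delta_u<1$, the windows $W_n$ are pairwise disjoint, because each has length at most $2\delta_u$ and consecutive centers are at least $1$ apart. If instead $2\delta_u\ge1$, pass to a further subsequence with $t_{n+1}-t_n\ge 2\delta_u+1$, which again makes the windows disjoint. Summing over infinitely many disjoint windows gives $\int_0^\infty N(t)^{-1}\dd t=\infty$, which contradicts \eqref{eq:finite-K-global}.

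There is no real obstacle here. The one point to check is that the constants $\delta_u,c_u,C_u$ in Lemma~\ref{lem:local-constancy} are uniform in $t_0$. The lemma states exactly this, and the lower bound $N\ge1$ is only used to keep the windows inside $[0,\infty)$ and of bounded length. The finiteness of $\int N^{-1}\,\dd t$ is unaffected by passing to the comparable scale function, as the lemma itself notes.
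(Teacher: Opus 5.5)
Your argument is correct and is essentially the paper's proof: both argue by contradiction, producing infinitely many disjoint time intervals on which $N\lesssim A$ and whose lengths are $\gtrsim A^{-2}$, so that each contributes a fixed positive amount to $\int_0^\infty N(t)^{-1}\dd t$. The only difference is the source of the intervals. The paper uses the characteristic-interval partition \eqref{eq:characteristic-interval}, where disjointness is automatic. You use the windows from \eqref{eq:local-constancy} and get disjointness by thinning out the times $t_n$, which lets you work with the original scale function directly.
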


\begin{proof}
Cover $[0,\infty)$ by characteristic intervals $J_k$. By \eqref{eq:characteristic-interval},
\begin{align}
 \int_{J_k}N(t)^{-1}\dd t
 &\ge |J_k|\bigl(\sup_{t\in J_k}N(t)\bigr)^{-1}\notag\\
 &\ge c_uN_k^{-2}(C_uN_k)^{-1}
 =c_u'N_k^{-3},
 \qquad c_u':=c_u/C_u>0.
 \label{eq:K-contribution-lower}
\end{align}
If \eqref{eq:N-to-infty} failed, there would be $A<\infty$, times $t_n\to\infty$, and distinct characteristic intervals $J_{k_n}\ni t_n$ such that $N(t_n)\le A$. By \eqref{eq:characteristic-interval}, $N_{k_n}\le C_uA$. Therefore, using the one-interval lower bound \eqref{eq:K-contribution-lower},
\begin{align*}
 \int_0^\infty N(t)^{-1}\dd t
 &\ge\sum_{n=1}^\infty\int_{J_{k_n}}N(t)^{-1}\dd t\\
 &\ge\sum_{n=1}^\infty c_u'(C_uA)^{-3}=\infty,
\end{align*}
contradicting \eqref{eq:finite-K-global}.
\end{proof}

The scale divergence and low-frequency mass recovery now complete the exclusion.

\begin{proposition}[Exclusion of rapid cascades]\label{prop:rapid-cascade-exclusion}
There is no nonzero forward-global almost-periodic solution satisfying \eqref{eq:N-ge-one}, Proposition~\ref{prop:no-waste}, and \eqref{eq:finite-K-global}. Hence the channel $\mathsf{RC}$ is empty.
\end{proposition}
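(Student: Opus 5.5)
Suppose $u$ is nonzero and satisfies \eqref{eq:N-ge-one}, Proposition~\ref{prop:no-waste}, and \eqref{eq:finite-K-global}. The plan is to combine finite-mass recovery with the absolute low-frequency bound, and then use mass conservation to force $\M(u)=0$.

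\textbf{Step 1: finite mass and conservation.} Since $\mathsf K([0,\infty))<\infty$, Proposition~\ref{prop:LTS} applies with $T_{\max}=\infty$. By \eqref{eq:LTS-mass-recovery}, $u(t)\in L^2$ for every $t$. Then $u(0)\in H^1$, so Proposition~\ref{prop:local-theory}(ii) gives $u\in C_tH^1_x$ and $\M(u(t))=\M(u(0))=:m$ for all $t\ge0$. Since $u\not\equiv0$, we have $m>0$.

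\textbf{Step 2: the mass sits at frequencies $\sim N(t)$.} Split $u(t)=P_{\le M}u(t)+P_{>M}u(t)$. For $0<M\le1$, \eqref{eq:absolute-low-mass} gives $\|P_{\le M}u(t)\|_2\le C_uM^{1/2}$, uniformly in $t$. For the high part, fix $\eta>0$ and use \eqref{eq:ap-frequency} together with Plancherel:
\[
 \|P_{>M}u(t)\|_2^2\lesssim\int_{|\xi|\ge M}|\widehat u(t,\xi)|^2\dd\xi
 \le M^{-2}\int_{M\le|\xi|\le N(t)/C(\eta)}|\xi|^2|\widehat u|^2\dd\xi
 +\Bigl(\frac{C(\eta)}{N(t)}\Bigr)^{2}\|\nabla u(t)\|_2^2 .
\]
The first term is at most $M^{-2}\eta$. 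The second is $O_u(C(\eta)^2N(t)^{-2})$, with $\|\nabla u\|_2$ bounded by Lemma~\ref{lem:coercivity}.

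\textbf{Step 3: choose the parameters.} First fix $M\le1$ so small that $C_u^2M\le m/4$. Next fix $\eta$ with $M^{-2}\eta\le m/8$. By Lemma~\ref{lem:N-to-infty}, $N(t)\to\infty$, so for $t$ large the last term is at most $m/8$. Then $\|u(t)\|_2^2\le 2\|P_{\le M}u\|_2^2+2\|P_{>M}u\|_2^2$, which after slight adjustment of the constants is strictly less than $m$. This contradicts conservation of mass, so $u\equiv0$.

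The channel $\mathsf{RC}$ is then empty: a one-sided normalized minimal element in $\mathsf{RC}$ satisfies all of the hypotheses above by Proposition~\ref{prop:one-sided-selection}, and it is nonzero because $E_c>0$.

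\textbf{Main obstacle.} The delicate point is the first term in Step 2. Frequency compactness \eqref{eq:ap-frequency} only controls $|\xi|^2$-weighted mass in $M\le|\xi|\le N(t)/C(\eta)$, which costs a factor $M^{-2}$. This is why the order of choices must be $M$ first, then $\eta$, then $t$. Everything else reduces to citing Proposition~\ref{prop:LTS}, whose sign-independence has already been verified.
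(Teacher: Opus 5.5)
Your proof is correct and follows essentially the same route as the paper. Both arguments combine finite mass and the absolute low-frequency bound from Proposition~\ref{prop:LTS}, frequency compactness with the $M^{-2}$ loss on the intermediate band, and $N(t)\to\infty$ from Lemma~\ref{lem:N-to-infty} for the top band, choosing parameters in the order $M$, then the compactness parameter, then $t$, before invoking mass conservation.

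The differences are cosmetic. The paper uses a smooth three-piece Littlewood--Paley split and shows $\|u(t)\|_2\le\varepsilon$ for all large $t$. You use sharp Fourier cutoffs and contradict $\M(u)=m>0$ directly; for that you need to tighten your constants so the final bound is strictly below $m$.
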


\begin{proof}
By Proposition~\ref{prop:LTS}, the low-frequency estimate \eqref{eq:absolute-low-mass} holds for some $C_u<\infty$. Lemma~\ref{lem:N-to-infty} also gives the scale divergence \eqref{eq:N-to-infty}, and
\[
 u\in L_t^\infty L_x^2([0,\infty)\times\R^3),
 \qquad
 u(0)\in\dot H^1\cap L^2=H^1.
\]
Persistence of $H^1$ regularity and uniqueness in Proposition~\ref{prop:local-theory}(ii) give
\[
 u\in C_tH_x^1([0,\infty)\times\R^3),
 \qquad
 \M(u(t))=\M(u(0))<\infty
 \quad(t\ge0).
\]
Write this conserved quantity as $\M(u):=\|u(t)\|_2^2$. Enlarging $C_u$ if necessary, assume also $C_u\ge\sup_{t\ge0}\|\nabla u(t)\|_2$.

Let $C_{\mathrm{LP}}\ge1$ depend only on the fixed Littlewood--Paley cutoff and satisfy
\[
 \|(P_{\le L}-P_{\le M})f\|_2
 \le C_{\mathrm{LP}}M^{-1}\|\nabla P_{\le L}f\|_2
 \qquad(L>M>0).
\]
Fix $\varepsilon>0$. First choose $M\in(0,1)$ such that $C_uM^{1/2}\le\varepsilon/4$. Next, by frequency compactness \eqref{eq:ap-frequency} and the support of the smooth cutoff, choose $c\in(0,1)$ so that
\[
 \|\nabla P_{\le cN(t)}u(t)\|_2
 \le \frac{\varepsilon M}{4C_{\mathrm{LP}}}
 \qquad(t\ge0).
\]
Finally, using the scale divergence \eqref{eq:N-to-infty}, choose $t_\varepsilon$ so that, for $t\ge t_\varepsilon$,
\[
 cN(t)>M,
 \qquad
 \frac{C_u}{cN(t)}\le\frac{\varepsilon}{4}.
\]

For such $t$, define
\[
 P_{(M,cN(t)]}:=P_{\le cN(t)}-P_{\le M}.
\]
Then
\begin{equation}\label{eq:three-frequency-decomposition}
 u=P_{\le M}u+P_{(M,cN(t)]}u+P_{>cN(t)}u.
\end{equation}
The three pieces satisfy
\begin{align}
 \|P_{\le M}u(t)\|_2
 &\le C_uM^{1/2}\le\frac{\varepsilon}{4},\notag\\
 \|P_{(M,cN(t)]}u(t)\|_2
 &\le C_{\mathrm{LP}}M^{-1}
 \|\nabla P_{\le cN(t)}u(t)\|_2
 \le\frac{\varepsilon}{4},\notag\\
 \|P_{>cN(t)}u(t)\|_2
 &\le(cN(t))^{-1}\|\nabla u(t)\|_2
 \le\frac{\varepsilon}{4}.
 \label{eq:high-piece}
\end{align}
Hence \eqref{eq:three-frequency-decomposition}--\eqref{eq:high-piece} give
\begin{equation}\label{eq:mass-less-epsilon}
 \|u(t)\|_2\le\varepsilon
 \qquad(t\ge t_\varepsilon).
\end{equation}
The estimate \eqref{eq:mass-less-epsilon}, conservation of mass, and arbitrariness of $\varepsilon$ imply $\M(u)=0$, so $u\equiv0$, a contradiction.
\end{proof}

\section{Logarithmically averaged kernels and conservation laws}

From this point on, $u$ denotes a finite-mass global solution satisfying the hypotheses of Theorem~\ref{thm:finite-mass-rigidity}. Proposition~\ref{prop:local-theory}(ii), conservation of mass, and Lemma~\ref{lem:coercivity} give
\begin{equation}\label{eq:global-H1-budget}
 \M(u(t))=\M(u(0))<\infty,
 \qquad
 \sup_{t\ge0}\|\nabla u(t)\|_2<\infty,
 \qquad
 \|u\|_{L_t^\infty H_x^1([0,\infty))}<\infty.
\end{equation}
In Sections~5--8, we write $\M(u):=\M(u(t))=\M(u(0))$.

\subsection{The logarithmic scale average}

Fix a real-valued radial cutoff $\zeta\in C_c^\infty(\R^3)$ satisfying
\begin{equation}\label{eq:zeta-assumption}
 0\le\zeta\le1,
 \qquad
 \zeta(x)=1\quad(|x|\le1),
 \qquad
 \supp\zeta\subset\{x:|x|\le2\}.
\end{equation}
Define the radial convolution kernel
\begin{equation}\label{eq:Phi-def}
 \Phi(z):=\int_{\R^3}\zeta(w)^2\zeta(w-z)^2\dd w.
\end{equation}
Let $e_1:=(1,0,0)$ and define
\begin{equation}\label{eq:Phi-radial-profile}
 \Phi_{\mathrm{rad}}(r):=\Phi(re_1),
 \qquad r\ge0.
\end{equation}
By \eqref{eq:zeta-assumption},
\begin{equation}\label{eq:Phi-properties}
 \Phi\in C_c^\infty(\R^3),
 \qquad
 \Phi(z)=\Phi_{\mathrm{rad}}(|z|),
 \qquad
 \supp\Phi\subset\{z:|z|\le4\}.
\end{equation}
Given $R_0>0$ and $J\ge1$, define
\begin{align}
 \phi_{J,R_0}(z)
 &:=\frac1J\int_{R_0}^{e^JR_0}
 \Phi(z/R)\frac{\dd R}{R},
 \label{eq:phi-def}\\
 A_{J,R_0}(z)&:=z\phi_{J,R_0}(z).
 \label{eq:A-def}
\end{align}
For fixed $(J,R_0)$, abbreviate $\phi:=\phi_{J,R_0}$ and $A:=A_{J,R_0}$. The measure $\dd R/R$ is uniform in the logarithmic variable $\rho=\log(R/R_0)\in[0,J]$: equal logarithmic scale intervals receive equal weight, while the ratio of the endpoint radii is $e^J$.

We record estimates for the endpoint size, nonflat derivative, and bi-Laplacian of the averaged kernel.

\begin{lemma}[Kernel estimates]\label{lem:kernel-bounds}
For $z\in\R^3$,
\begin{align}
 \partial_kA_j(z)
 &=\delta_{jk}\phi(z)+z_j\partial_k\phi(z),
 \label{eq:A-Hessian}\\
 \operatorname{div}A(z)
 &=3\phi(z)+z\cdot\nabla\phi(z).
 \label{eq:A-divergence}
\end{align}
There exists $C$, depending only on $\zeta$, such that
\begin{align}
 \|A\|_\infty&\le Ce^JR_0,
 \label{eq:A-endpoint-bound}\\
 \|z\otimes\nabla\phi\|_\infty
 +\|z\cdot\nabla\phi\|_\infty
 &\le \frac{C}{J},
 \label{eq:flat-error-bound}\\
 \|\Delta\operatorname{div}A\|_\infty
 &\le \frac{C}{JR_0^2}.
 \label{eq:biharmonic-bound}
\end{align}
\end{lemma}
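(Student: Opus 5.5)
The two identities \eqref{eq:A-Hessian}--\eqref{eq:A-divergence} follow directly from the product rule applied to $A_j=z_j\phi$: one gets $\partial_kA_j=\delta_{jk}\phi+z_j\partial_k\phi$, and tracing over $j=k$ gives $3\phi+z\cdot\nabla\phi$. For the quantitative bounds we work with the single-scale kernel $\Phi$. By \eqref{eq:Phi-properties} it is smooth, radial, and supported in $\{|z|\le4\}$, and it is bounded by $\|\zeta\|_2^2\le C$. We differentiate under the integral in \eqref{eq:phi-def}, which is legitimate because $R$ ranges over a compact interval and $\Phi$ is smooth.

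\emph{Endpoint bound \eqref{eq:A-endpoint-bound}.} Since $0\le\Phi\le C$ and the normalized measure $J^{-1}\dd R/R$ on $[R_0,e^JR_0]$ has total mass $1$, we have $0\le\phi\le C$. Moreover $\Phi(z/R)=0$ unless $|z|\le4R\le4e^JR_0$. Hence $\phi(z)=0$ for $|z|>4e^JR_0$, and therefore $|A(z)|=|z|\phi(z)\le 4Ce^JR_0$.

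\emph{Flat error \eqref{eq:flat-error-bound}.} This is the key step, where the logarithmic average pays off. We write
\[
 z\cdot\nabla_z\bigl[\Phi(z/R)\bigr]=(z/R)\cdot(\nabla\Phi)(z/R)=-R\,\partial_R\bigl[\Phi(z/R)\bigr].
\]
Consequently
\[
 z\cdot\nabla\phi(z)=-\frac1J\int_{R_0}^{e^JR_0}\partial_R\bigl[\Phi(z/R)\bigr]\dd R
 =\frac1J\bigl(\Phi(z/R_0)-\Phi(z/(e^JR_0))\bigr),
\]
which is bounded by $2\|\Phi\|_\infty/J$. For the tensor $z_j\partial_k\phi$, we use radiality. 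Writing $\Phi=\Phi_{\mathrm{rad}}(|\cdot|)$, we get $\nabla\phi(z)=\frac{z}{|z|}\partial_{|z|}\phi$. Hence $z_j\partial_k\phi=\frac{z_jz_k}{|z|^2}\,(z\cdot\nabla\phi)$, and each entry is bounded by $|z\cdot\nabla\phi|\le C/J$. This radial step is the only point needing care: the telescoping in $R$ controls only the radial derivative, and radiality reduces the full tensor to it.

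\emph{Bi-Laplacian \eqref{eq:biharmonic-bound}.} From \eqref{eq:A-divergence} and the telescoping formula,
\[
 \operatorname{div}A=3\phi+\frac1J\bigl(\Phi(\cdot/R_0)-\Phi(\cdot/(e^JR_0))\bigr).
\]
Applying $\Delta$ to the last two terms gives at most $J^{-1}(R_0^{-2}+e^{-2J}R_0^{-2})\|\Delta\Phi\|_\infty\le C/(JR_0^2)$. For the first term, $\Delta\phi=J^{-1}\int R^{-2}(\Delta\Phi)(z/R)\,\dd R/R$. Using $\int_{R_0}^\infty R^{-3}\dd R=R_0^{-2}/2$, this is bounded by $C/(JR_0^2)$. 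Summing the two contributions gives \eqref{eq:biharmonic-bound} with $C$ depending only on $\zeta$.
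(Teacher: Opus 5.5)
Your proposal is correct and follows essentially the same route as the paper. It uses the product rule for the identities, the support condition $|z|\le 4R$ for the endpoint bound, the telescoping identity $z\cdot\nabla_z\Phi(z/R)=-R\,\partial_R\Phi(z/R)$ with the radial reduction $z_j\partial_k\phi=\frac{z_jz_k}{|z|^2}\,z\cdot\nabla\phi$ for the flat error, and $R^{-2}$ scaling for the bi-Laplacian. The only cosmetic difference is in the last step: the paper scales $G_\zeta=\Delta\operatorname{div}(y\Phi(y))$ at each fixed $R$ before averaging, whereas you first use \eqref{eq:A-divergence} and the telescoped formula and then apply $\Delta$ to $3\phi$ and to the two boundary terms, which needs only a bound on $\Delta\Phi$.
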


\begin{proof}
Equations \eqref{eq:A-Hessian}--\eqref{eq:A-divergence} follow by differentiating $A_j(z)=z_j\phi(z)$. If $\Phi(z/R)\ne0$, then \eqref{eq:Phi-properties} gives $|z|\le4R$. Hence
\begin{align*}
 |A(z)|
 &\le \frac{|z|}{J}\int_{R_0}^{e^JR_0}
 |\Phi(z/R)|\frac{\dd R}{R}\\
 &\le \frac{4\|\Phi\|_\infty}{J}\int_{R_0}^{e^JR_0}\dd R
 \le Ce^JR_0,
\end{align*}
proving \eqref{eq:A-endpoint-bound}.

Let $r:=|z|/R$. By \eqref{eq:Phi-radial-profile}, the chain rule gives
\begin{equation}\label{eq:scale-derivative}
 z\cdot\nabla_z\Phi(z/R)
 =r\Phi_{\mathrm{rad}}'(r)
 =-R\partial_R\Phi(z/R).
\end{equation}
Substitution of \eqref{eq:scale-derivative} into \eqref{eq:phi-def} yields
\begin{align}
 z\cdot\nabla\phi(z)
 &=-\frac1J\int_{R_0}^{e^JR_0}\partial_R\Phi(z/R)\dd R\notag\\
 &=\frac1J\left[\Phi(z/R_0)-\Phi(z/(e^JR_0))\right].
 \label{eq:log-telescope}
\end{align}
Since $\phi$ is radial, $z_j\partial_k\phi$ equals
\begin{equation}\label{eq:radial-matrix}
 \frac{z_jz_k}{|z|^2}\,z\cdot\nabla\phi
\end{equation}
for $z\ne0$, with continuous extension at the origin. Equations \eqref{eq:log-telescope}--\eqref{eq:radial-matrix} give \eqref{eq:flat-error-bound}.

For fixed $R$, define
\[
 G_\zeta(y):=\Delta_y\operatorname{div}_y(y\Phi(y)).
\]
Then $G_\zeta\in C_c^\infty(\R^3)$, and scaling gives
\begin{equation}\label{eq:scaled-biharmonic}
 \Delta_z\operatorname{div}_z\bigl(z\Phi(z/R)\bigr)
 =R^{-2}G_\zeta(z/R).
\end{equation}
By \eqref{eq:A-def}, \eqref{eq:scaled-biharmonic}, and boundedness of $G_\zeta$,
\begin{align*}
 \|\Delta\operatorname{div}A\|_\infty
 &\le\frac{C}{J}\int_{R_0}^{e^JR_0}R^{-2}\frac{\dd R}{R}\\
 &=\frac{C}{2J}\left(R_0^{-2}-e^{-2J}R_0^{-2}\right)
 \le\frac{C}{JR_0^2}.
\end{align*}
\end{proof}

\subsection{Local conservation laws}

For a smooth function $v$, define the mass density, momentum density, and stress tensor by
\begin{align}
 m[v](t,x)&:=|v(t,x)|^2,\notag\\
 p_j[v](t,x)&:=\Ima\bigl(\overline{v(t,x)}\,\partial_jv(t,x)\bigr),\notag\\
 T_{jk}[v](t,x)&:=\frac12\delta_{jk}\Delta m[v](t,x)
 -2\Rea\bigl(\partial_jv(t,x)\,\partial_k\overline{v(t,x)}\bigr).
 \label{eq:T-def}
\end{align}
Write $p[v]:=(p_1[v],p_2[v],p_3[v])$ and $T[v]:=(T_{jk}[v])_{1\le j,k\le3}$. For the current solution $u$, abbreviate
\[
 m:=m[u],\qquad p_j:=p_j[u],\qquad p:=p[u],\qquad
 T_{jk}:=T_{jk}[u],\qquad T:=T[u].
\]
Repeated indices $j,k\in\{1,2,3\}$ are summed.

The Morawetz identity begins with the local conservation laws for mass and momentum.

\begin{lemma}[Local conservation laws]\label{lem:local-laws}
If $u$ is a smooth rapidly decaying solution, then
\begin{equation}\label{eq:local-mass-law}
 \partial_tm+2\partial_kp_k=0,
\end{equation}
and
\begin{equation}\label{eq:local-momentum-law}
 \partial_tp_j
 =\partial_kT_{jk}+\frac23\partial_j|u|^6.
\end{equation}
\end{lemma}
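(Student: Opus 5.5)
The plan is a direct computation from the equation $\ii\partial_tu=-\Delta u-|u|^4u$, written as $\partial_tu=\ii\Delta u+\ii|u|^4u$. Smoothness and rapid decay justify every differentiation and the absence of boundary terms. Since both identities are pointwise, no integration is needed.

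For \eqref{eq:local-mass-law}, compute $\partial_t|u|^2=2\Rea(\overline u\,\partial_tu)=2\Rea(\ii\overline u\Delta u)+2\Rea(\ii|u|^6)$. The nonlinear contribution vanishes because $\ii|u|^6$ is purely imaginary. This leaves $\partial_tm=-2\Ima(\overline u\Delta u)$. Since $\Ima(\partial_k\overline u\,\partial_ku)=0$, we have $\partial_kp_k=\Ima(\overline u\Delta u)$, which gives $\partial_tm+2\partial_kp_k=0$.

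For \eqref{eq:local-momentum-law}, write
\[
 \partial_tp_j=\Ima(\partial_t\overline u\,\partial_ju)+\Ima(\overline u\,\partial_j\partial_tu),
\]
then substitute the equation and its conjugate $\partial_t\overline u=-\ii\Delta\overline u-\ii|u|^4\overline u$. I will handle the linear and nonlinear parts separately.

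The linear part is $-\Rea(\Delta\overline u\,\partial_ju)+\Rea(\overline u\,\partial_j\Delta u)$. The key step is to rewrite it in divergence form. Expanding $\partial_k\partial_k\partial_j|u|^2$ and $\partial_k\Rea(\partial_ju\,\partial_k\overline u)$ should give the identity
\[
 \Rea(\overline u\,\partial_j\Delta u)-\Rea(\Delta\overline u\,\partial_ju)
 =\tfrac12\partial_j\Delta|u|^2-2\partial_k\Rea(\partial_ju\,\partial_k\overline u).
\]
The right-hand side is exactly $\partial_kT_{jk}$, since $\partial_k(\tfrac12\delta_{jk}\Delta m)=\tfrac12\partial_j\Delta m$. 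This bookkeeping of third-order terms is the only step needing care. I would verify it by expanding $\tfrac12\partial_j\Delta|u|^2=\Rea(\overline u\,\partial_j\Delta u)+2\Rea(\partial_k\overline u\,\partial_j\partial_ku)+\Rea(\Delta\overline u\,\partial_ju)$. The middle term should then cancel against the corresponding term from $2\partial_k\Rea(\partial_ju\,\partial_k\overline u)=2\Rea(\partial_j\partial_ku\,\partial_k\overline u)+2\Rea(\partial_ju\,\Delta\overline u)$.

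The nonlinear part is $-\Rea(|u|^4\overline u\,\partial_ju)+\Rea(\overline u\,\partial_j(|u|^4u))$. Expanding $\partial_j(|u|^4u)=|u|^4\partial_ju+u\,\partial_j|u|^4$ gives $\Rea(\overline u\,\partial_j(|u|^4u))=|u|^4\Rea(\overline u\,\partial_ju)+|u|^2\partial_j|u|^4$. The first piece cancels the term $-\Rea(|u|^4\overline u\,\partial_ju)$, so the nonlinear part equals $|u|^2\partial_j|u|^4=\tfrac23\partial_j|u|^6$, using $\partial_j|u|^4=2|u|^2\partial_j|u|^2$. Adding the linear and nonlinear parts yields \eqref{eq:local-momentum-law}.
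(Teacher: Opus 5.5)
Your proposal is correct and follows essentially the same route as the paper. You substitute $\partial_tu=\ii(\Delta u+|u|^4u)$, reduce the mass law to $\partial_kp_k=\Ima(\overline u\,\Delta u)$, and for the momentum law rewrite the linear part in divergence form via the expansions of $\tfrac12\partial_j\Delta|u|^2$ and $\partial_k\Rea(\partial_ju\,\partial_k\overline u)$, while the nonlinear part collapses to $\tfrac23\partial_j|u|^6$; all your signs and coefficients agree with the paper's computation.
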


\begin{proof}
Equation \eqref{eq:nls} gives
\begin{equation}\label{eq:u-time}
 u_t=\ii(\Delta u+|u|^4u),
 \qquad
 \bar u_t=-\ii(\Delta\bar u+|u|^4\bar u).
\end{equation}
Hence
\begin{align*}
 \partial_tm
 &=u_t\bar u+u\bar u_t\\
 &=\ii\Delta u\,\bar u-\ii u\Delta\bar u\\
 &=-2\partial_k\Ima(\bar u\partial_ku)
 =-2\partial_kp_k,
\end{align*}
which proves \eqref{eq:local-mass-law}.

Differentiating $p_j$ and using \eqref{eq:u-time},
\begin{align}
 \partial_tp_j
 &=\Ima(\bar u_t\partial_ju+\bar u\partial_ju_t)\notag\\
 &=\Rea\left(-\Delta\bar u\,\partial_ju+\bar u\,\partial_j\Delta u\right)\notag\\
 &\quad+\Rea\left(-|u|^4\bar u\,\partial_ju+\bar u\,\partial_j(|u|^4u)\right).
 \label{eq:momentum-expanded}
\end{align}
For the linear term,
\begin{align}
 \frac12\partial_j\Delta|u|^2
 &=\partial_j|\nabla u|^2
 +\Rea\bigl(\partial_j\bar u\,\Delta u+\bar u\,\partial_j\Delta u\bigr),
 \label{eq:half-delta-mass}\\
 \partial_k\Rea(\partial_ju\,\partial_k\bar u)
 &=\Rea(\partial_{jk}u\,\partial_k\bar u)
 +\Rea(\partial_ju\,\Delta\bar u),
 \label{eq:stress-derivative}\\
 \partial_j|\nabla u|^2
 &=2\Rea(\partial_{jk}u\,\partial_k\bar u).
 \label{eq:grad-square-derivative}
\end{align}
Substituting \eqref{eq:stress-derivative}--\eqref{eq:grad-square-derivative} into \eqref{eq:half-delta-mass} yields
\begin{align}
 &\partial_k\left[\frac12\delta_{jk}\Delta|u|^2
 -2\Rea(\partial_ju\,\partial_k\bar u)\right]\notag\\
 &\quad=\Rea\bigl(\bar u\,\partial_j\Delta u-\Delta\bar u\,\partial_ju\bigr).
 \label{eq:linear-momentum-divergence}
\end{align}
For the nonlinear term,
\[
 \partial_j(|u|^4u)
 =|u|^4\partial_ju+4|u|^2\Rea(\bar u\partial_ju)u,
\]
so
\begin{align}
 &\Rea\left(-|u|^4\bar u\,\partial_ju+\bar u\,\partial_j(|u|^4u)\right)\notag\\
 &\quad=4|u|^4\Rea(\bar u\partial_ju)
 =\frac23\partial_j|u|^6.
 \label{eq:nonlinear-momentum-gradient}
\end{align}
Substitution of \eqref{eq:linear-momentum-divergence} and \eqref{eq:nonlinear-momentum-gradient} into \eqref{eq:momentum-expanded} gives \eqref{eq:local-momentum-law}.
\end{proof}

\section{The self-interaction Morawetz identity}

For a smooth finite-mass solution, define
\begin{equation}\label{eq:Morawetz-functional}
 \mathcal I_{J,R_0}(t)
 :=2\iint_{\R^3\times\R^3}
 m(t,y)A_j(x-y)p_j(t,x)\dd x\dd y.
\end{equation}

Substitution of the local conservation laws into the self-interaction functional gives the following exact time-derivative formula.

\begin{proposition}[Morawetz identity]\label{prop:exact-Morawetz}
If $u$ is smooth and rapidly decaying, then
\begin{align}
 \frac{\dd}{\dd t}\mathcal I_{J,R_0}(t)
 ={}&4\iint \partial_kA_j(x-y)
 \Bigl[
 m(y)\Rea(\partial_ju\,\partial_k\bar u)(x)
 -p_j(x)p_k(y)
 \Bigr]\dd x\dd y
 \notag\\
 &-\frac43\iint
 (\operatorname{div}A)(x-y)m(y)|u(x)|^6\dd x\dd y
 \notag\\
 &-\iint
 \Delta(\operatorname{div}A)(x-y)m(x)m(y)\dd x\dd y.
 \label{eq:exact-Morawetz-identity}
\end{align}
Moreover, if $\M(u)<\infty$, then
\begin{equation}\label{eq:Morawetz-endpoint}
 \sup_t|\mathcal I_{J,R_0}(t)|
 \le Ce^JR_0\M(u)^{3/2}
 \sup_t\|\nabla u(t)\|_2.
\end{equation}
\end{proposition}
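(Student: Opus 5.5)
The plan is to differentiate \eqref{eq:Morawetz-functional} under the integral sign, insert the local conservation laws of Lemma~\ref{lem:local-laws}, and integrate by parts in $x$ and $y$. Smoothness and rapid decay justify every exchange of derivative and integral and remove all boundary terms. The time derivative splits into two pieces:
\[
 \frac{\dd}{\dd t}\mathcal I_{J,R_0}
 =2\iint \partial_tm(y)A_j(x-y)p_j(x)
 +2\iint m(y)A_j(x-y)\partial_tp_j(x).
\]

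\textbf{First piece.} Substitute $\partial_tm=-2\partial_kp_k$ and integrate by parts in $y$. The derivative falls on $A_j(x-y)$ and produces $-(\partial_kA_j)(x-y)$. This piece therefore equals $-4\iint\partial_kA_j(x-y)p_k(y)p_j(x)$, which is exactly the momentum--momentum term in \eqref{eq:exact-Morawetz-identity}.

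\textbf{Second piece.} Use \eqref{eq:local-momentum-law}, $\partial_tp_j=\partial_kT_{jk}+\frac23\partial_j|u|^6$, and integrate by parts in $x$. The stress term gives $-2\iint m(y)\partial_kA_j(x-y)T_{jk}(x)$. Expanding $T_{jk}$ from \eqref{eq:T-def}, the part $-2\Rea(\partial_ju\partial_k\bar u)$ yields $+4\iint\partial_kA_j\,m(y)\Rea(\partial_ju\partial_k\bar u)(x)$. The part $\frac12\delta_{jk}\Delta m$ yields $-\iint(\operatorname{div}A)(x-y)m(y)\Delta m(x)$; moving both $x$-derivatives onto the kernel gives $-\iint\Delta\operatorname{div}A(x-y)\,m(x)m(y)$. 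The potential term gives $-\frac43\iint(\operatorname{div}A)(x-y)m(y)|u(x)|^6$. Collecting the three pieces reproduces \eqref{eq:exact-Morawetz-identity}. The only real care needed is sign bookkeeping: $\partial_{y_k}[A_j(x-y)]=-(\partial_kA_j)(x-y)$, and the focusing sign appears in \eqref{eq:local-momentum-law} as $+\frac23$.

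\textbf{Endpoint bound \eqref{eq:Morawetz-endpoint}.} Use \eqref{eq:A-endpoint-bound}, $\|A\|_\infty\le Ce^JR_0$. Then
\[
 |\mathcal I_{J,R_0}|\le 2\|A\|_\infty\|m\|_{L^1}\|p\|_{L^1},
\]
with $\|m\|_{L^1}=\M(u)$. By Cauchy--Schwarz, $\|p\|_{L^1}\le\|u\|_2\|\nabla u\|_2=\M(u)^{1/2}\|\nabla u\|_2$. Combining these gives $Ce^JR_0\M(u)^{3/2}\sup_t\|\nabla u(t)\|_2$.

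I expect no serious obstacle. The identity is purely algebraic given Lemma~\ref{lem:local-laws}, and the kernel $A$ is smooth and bounded with bounded derivatives. The one point to check is that $\partial_kA_j$ and $\Delta\operatorname{div}A$ are bounded, so that all the integrals converge absolutely for rapidly decaying $u$. This follows because $\phi$ is a finite logarithmic average of dilates of the compactly supported $\Phi$, as in Lemma~\ref{lem:kernel-bounds}.
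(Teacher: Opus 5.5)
Your proposal is correct and follows the paper's proof essentially step for step. It uses the same split into the mass-law term (integrated by parts in $y$) and the momentum-law term (integrated by parts in $x$, with the $\frac12\delta_{jk}\Delta m$ part of the stress tensor moved onto the kernel by two further integrations by parts). The endpoint bound is also the same, via $\|A\|_\infty\le Ce^JR_0$ and $\|p\|_1\le\M(u)^{1/2}\|\nabla u\|_2$.
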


\begin{proof}
We suppress the time variable. Differentiating \eqref{eq:Morawetz-functional} and using the conservation laws in Lemma~\ref{lem:local-laws}, define
\begin{align*}
 I_m&:=2\iint \partial_tm(y)A_j(x-y)p_j(x)\dd x\dd y,\\
 I_p&:=2\iint m(y)A_j(x-y)\partial_tp_j(x)\dd x\dd y.
\end{align*}
Then
\begin{equation}\label{eq:I-prime-split}
 \mathcal I_{J,R_0}'=I_m+I_p.
\end{equation}
By \eqref{eq:local-mass-law}, integration by parts in $y$, together with
\[
 \partial_{y_k}A_j(x-y)=-\partial_kA_j(x-y),
\]
gives
\begin{align}
 I_m
 &=-4\iint \partial_{y_k}p_k(y)A_j(x-y)p_j(x)\dd x\dd y
 \notag\\
 &=4\iint p_k(y)\partial_{y_k}A_j(x-y)p_j(x)\dd x\dd y
 \notag\\
 &=-4\iint \partial_kA_j(x-y)p_j(x)p_k(y)\dd x\dd y.
 \label{eq:Im-computed}
\end{align}
By \eqref{eq:local-momentum-law},
\begin{align*}
 I_p
 &=2\iint m(y)A_j(x-y)\partial_kT_{jk}(x)\dd x\dd y\\
 &\quad+\frac43\iint m(y)A_j(x-y)\partial_j|u(x)|^6\dd x\dd y.
\end{align*}
Integrating by parts in $x$,
\begin{align}
 I_p
 &=-2\iint m(y)\partial_kA_j(x-y)T_{jk}(x)\dd x\dd y
 \notag\\
 &\quad-\frac43\iint m(y)(\operatorname{div}A)(x-y)|u(x)|^6\dd x\dd y.
 \label{eq:Ip-after-parts}
\end{align}
Using the stress tensor \eqref{eq:T-def},
\begin{align}
 -2\partial_kA_jT_{jk}
 &=-(\operatorname{div}A)\Delta m
 +4\partial_kA_j\Rea(\partial_ju\,\partial_k\bar u).
 \label{eq:stress-expanded}
\end{align}
For the first term in \eqref{eq:stress-expanded}, two further integrations by parts in $x$ yield
\begin{align}
 -\int_{\R^3}(\operatorname{div}A)(x-y)\Delta m(x)\dd x
 &=-\int_{\R^3}\Delta(\operatorname{div}A)(x-y)m(x)\dd x.
 \label{eq:double-parts}
\end{align}
Substituting \eqref{eq:stress-expanded} and \eqref{eq:double-parts} into \eqref{eq:Ip-after-parts} gives
\begin{align}
 I_p
 ={}&4\iint m(y)\partial_kA_j(x-y)
 \Rea(\partial_ju\,\partial_k\bar u)(x)\dd x\dd y
 \notag\\
 &-\frac43\iint
 (\operatorname{div}A)(x-y)m(y)|u(x)|^6\dd x\dd y
 \notag\\
 &-\iint
 \Delta(\operatorname{div}A)(x-y)m(x)m(y)\dd x\dd y.
 \label{eq:Ip-computed}
\end{align}
Substitution of \eqref{eq:Im-computed} and \eqref{eq:Ip-computed} into \eqref{eq:I-prime-split} proves \eqref{eq:exact-Morawetz-identity}.

For the endpoint bound, the pointwise estimate
\[
 |p(t,x)|\le |u(t,x)|\,|\nabla u(t,x)|
\]
and Cauchy--Schwarz give
\begin{equation}\label{eq:p-L1}
 \|p(t)\|_1
 \le\|u(t)\|_2\|\nabla u(t)\|_2
 =\M(u)^{1/2}\|\nabla u(t)\|_2.
\end{equation}
By Lemma~\ref{lem:kernel-bounds}, the endpoint estimate \eqref{eq:A-endpoint-bound} holds. Combining it with \eqref{eq:Morawetz-functional} and \eqref{eq:p-L1},
\begin{align*}
 |\mathcal I_{J,R_0}(t)|
 &\le2\|A\|_\infty
 \left(\int m(y)\dd y\right)
 \left(\int|p(x)|\dd x\right)\\
 &\le Ce^JR_0\M(u)^{3/2}\|\nabla u(t)\|_2.
\end{align*}
Taking the supremum over time proves \eqref{eq:Morawetz-endpoint}.
\end{proof}

\subsection{Local quantities and the main block}

For $R>0$ and $c\in\R^3$, define
\begin{equation}\label{eq:zeta-Rc}
 \zeta_{R,c}(x):=\zeta\left(\frac{x-c}{R}\right).
\end{equation}
Define the localized mass, localized momentum, localized potential-well functional, and coarse localized mass by
\begin{align}
 M_{R,c}(t)
 &:=\int_{\R^3}\zeta_{R,c}(x)^2|u(t,x)|^2\dd x,\notag\\
 P_{R,c}(t)
 &:=\int_{\R^3}\zeta_{R,c}(x)^2p(t,x)\dd x\in\R^3,\notag\\
 K_{R,c}(t)
 &:=\int_{\R^3}\zeta_{R,c}(x)^2
 \bigl(|\nabla u(t,x)|^2-|u(t,x)|^6\bigr)\dd x,\notag\\
 L_{R,c}(t)
 &:=\int_{B(c,2R)}|u(t,x)|^2\dd x.
 \label{eq:local-L}
\end{align}
Clearly,
\begin{equation}\label{eq:M-le-L}
 0\le M_{R,c}(t)\le L_{R,c}(t).
\end{equation}

The following convolution identity rewrites the flat-kernel term in terms of ballwise local quantities.

\begin{lemma}[Convolution formula]\label{lem:convolution}
For all $x,y\in\R^3$ and $R>0$,
\begin{equation}\label{eq:Phi-convolution}
 \Phi((x-y)/R)
 =R^{-3}\int_{\R^3}
 \zeta_{R,c}(x)^2\zeta_{R,c}(y)^2\dd c.
\end{equation}
\end{lemma}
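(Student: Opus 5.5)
The identity \eqref{eq:Phi-convolution} is a direct change of variables in the definition \eqref{eq:Phi-def}, so the plan is a one-line substitution followed by a symmetry check. Start from the right-hand side and insert \eqref{eq:zeta-Rc}:
\[
 R^{-3}\int_{\R^3}\zeta\!\left(\frac{x-c}{R}\right)^2\zeta\!\left(\frac{y-c}{R}\right)^2\dd c .
\]
Substitute $w:=(x-c)/R$, so $c=x-Rw$ and $\dd c=R^3\dd w$; the Jacobian factor $R^3$ exactly cancels the prefactor $R^{-3}$. Since $(y-c)/R=w-(x-y)/R$, the integral becomes
\[
 \int_{\R^3}\zeta(w)^2\,\zeta\!\left(w-\frac{x-y}{R}\right)^2\dd w=\Phi\!\left(\frac{x-y}{R}\right),
\]
which is \eqref{eq:Phi-def} evaluated at $z=(x-y)/R$.

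The only point needing a remark is the orientation of the substitution. The map $c\mapsto w$ is an affine bijection of $\R^3$ with $|\det|=R^{-3}$, so no boundary terms or sign issues arise; integrability is trivial because $\zeta\in C_c^\infty$. Alternatively one may note that $\Phi$ is even (indeed radial by \eqref{eq:Phi-properties}), so either substitution $w=(x-c)/R$ or $w=(y-c)/R$ yields the same result. There is no genuine obstacle; the step to state carefully is simply the Jacobian bookkeeping, so that the factor $R^{-3}$ is seen to be exactly the normalization that makes the identity hold for every $R>0$ and all $x,y$.
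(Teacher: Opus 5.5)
Your proposal is correct and follows the paper's proof exactly. Both use the substitution $c=x-Rw$ with $\dd c=R^3\dd w$ and $(y-c)/R=w-(x-y)/R$, which reduces the right-hand side to the definition of $\Phi$ evaluated at $(x-y)/R$.
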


\begin{proof}
In \eqref{eq:Phi-def}, make the change of variables
\[
 c=x-Rw,
 \qquad
 w=\frac{x-c}{R},
 \qquad
 \dd c=R^3\dd w.
\]
Then
\begin{align*}
 R^{-3}\int\zeta_{R,c}(x)^2\zeta_{R,c}(y)^2\dd c
 &=\int\zeta(w)^2
 \zeta\left(w-\frac{x-y}{R}\right)^2\dd w\\
 &=\Phi((x-y)/R).
\end{align*}
\end{proof}

\begin{figure}[htbp]
\centering
\begin{tikzpicture}[scale=0.92, every node/.style={font=\small}]
  \coordinate (X) at (-1.45,0);
  \coordinate (Y) at (1.45,0);
  \begin{scope}
    \clip (X) circle (2.15);
    \fill[gray!25] (Y) circle (2.15);
  \end{scope}
  \draw[thick] (X) circle (2.15);
  \draw[thick] (Y) circle (2.15);
  \fill (X) circle (1.7pt) node[below left=2pt] {$x$};
  \fill (Y) circle (1.7pt) node[below right=2pt] {$y$};
  \coordinate (C) at (0,0.62);
  \fill (C) circle (1.7pt) node[above=3pt] {$c$};
  \draw[dashed] (C)--(X);
  \draw[dashed] (C)--(Y);
  \node at (-2.45,1.75) {$B(x,2R)$};
  \node at (2.45,1.75) {$B(y,2R)$};
\end{tikzpicture}
\caption{Geometry of the local center in the convolution kernel. A center $c$ in the shaded overlap sees both $x$ and $y$.}
\label{fig:convolution-geometry}
\end{figure}

Figure~\ref{fig:convolution-geometry} illustrates the geometry behind \eqref{eq:Phi-convolution}. For fixed $x,y$ and $R$, the two cutoff factors are simultaneously nonzero precisely for centers $c$ in the overlap of the support balls. Thus integration in $c$ reorganizes the interaction of the pair $(x,y)$ into quantities localized at the same center and scale.

Define the flat main block by
\begin{equation}\label{eq:Q-def}
 \mathcal Q_{J,R_0}(t)
 :=\frac4J\int_{R_0}^{e^JR_0}\frac{\dd R}{R}\,R^{-3}
 \int_{\R^3}
 \bigl(M_{R,c}K_{R,c}-|P_{R,c}|^2\bigr)(t)\dd c.
\end{equation}

The convolution formula separates the Morawetz derivative into this main block and controlled remainders.

\begin{proposition}[Main-block decomposition]\label{prop:Q-remainder}
For a smooth rapidly decaying solution, first set $z:=x-y$ and define
\begin{align}
 \mathcal R_{J,R_0}(t)
 :={}&4\iint z_j\partial_k\phi(z)
 \Bigl[
 m(y)\Rea(\partial_ju\,\partial_k\bar u)(x)
 -p_j(x)p_k(y)
 \Bigr]\dd x\dd y
 \notag\\
 &-\frac43\iint
 \bigl(z\cdot\nabla\phi(z)\bigr)m(y)|u(x)|^6\dd x\dd y
 \notag\\
 &-\iint
 \Delta(\operatorname{div}A)(z)m(x)m(y)\dd x\dd y.
 \label{eq:R-explicit}
\end{align}
Then
\begin{equation}\label{eq:I-Q-R}
 \mathcal I_{J,R_0}'(t)
 =\mathcal Q_{J,R_0}(t)+\mathcal R_{J,R_0}(t).
\end{equation}
At every time,
\begin{align}
 |\mathcal R_{J,R_0}(t)|
 &\le \frac{C}{J}\Bigl[
 \M(u(t))\|\nabla u(t)\|_2^2
 +\M(u(t))\|u(t)\|_6^6\Bigr]
 +\frac{C}{JR_0^2}\M(u(t))^2.
 \label{eq:R-bound-explicit}
\end{align}
In particular, under \eqref{eq:global-H1-budget}, there is $C_u<\infty$ such that
\begin{equation}\label{eq:R-bound}
 |\mathcal R_{J,R_0}(t)|
 \le\frac{C_u}{J}+\frac{C_u}{JR_0^2}
 \qquad(t\ge0).
\end{equation}
\end{proposition}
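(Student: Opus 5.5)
We start from the exact identity \eqref{eq:exact-Morawetz-identity} of Proposition~\ref{prop:exact-Morawetz} and substitute the kernel formulas \eqref{eq:A-Hessian}--\eqref{eq:A-divergence} from Lemma~\ref{lem:kernel-bounds}. In the first line, $\partial_kA_j(z)=\delta_{jk}\phi(z)+z_j\partial_k\phi(z)$. The $z_j\partial_k\phi$ part gives precisely the first line of \eqref{eq:R-explicit}. The $\delta_{jk}\phi$ part leaves the flat term
\[
4\iint\phi(x-y)\bigl[m(y)|\nabla u(x)|^2-p(x)\cdot p(y)\bigr]\dd x\dd y .
\]
In the second line, $\operatorname{div}A=3\phi+z\cdot\nabla\phi$. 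Since $-\tfrac43\cdot3=-4$, this produces the flat term $-4\iint\phi(x-y)m(y)|u(x)|^6\dd x\dd y$, plus the second line of \eqref{eq:R-explicit}. The bi-Laplacian line is already part of $\mathcal R_{J,R_0}$ and is left as it stands.

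Next we identify the flat terms with $\mathcal Q_{J,R_0}$. Insert the definition \eqref{eq:phi-def} of $\phi$, so that
\[
4\phi(x-y)=\frac4J\int_{R_0}^{e^JR_0}\Phi((x-y)/R)\frac{\dd R}{R}.
\]
Then apply Lemma~\ref{lem:convolution} to write $\Phi((x-y)/R)=R^{-3}\int\zeta_{R,c}(x)^2\zeta_{R,c}(y)^2\dd c$. By Fubini, which is justified by the finite mass, the $H^1$ bounds and the compact support of the integrands in $c$ for fixed $R$, the $(x,y)$ integral factorizes for each fixed $(R,c)$. The kinetic and potential pieces combine to $M_{R,c}K_{R,c}$. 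The momentum piece becomes $P_{R,c}\cdot P_{R,c}=|P_{R,c}|^2$, because both factors carry the same weight $\zeta_{R,c}^2$. This gives exactly \eqref{eq:Q-def}, and hence \eqref{eq:I-Q-R}.

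For the bound \eqref{eq:R-bound-explicit}, estimate each term of \eqref{eq:R-explicit} by taking $L^\infty$ norms of the kernels.
\begin{enumerate}[label=\textup{(\alph*)},leftmargin=2.4em]
 \item For the first line, use $\|z\otimes\nabla\phi\|_\infty\le C/J$ from \eqref{eq:flat-error-bound}. The kinetic part is then bounded by $\tfrac CJ\M(u)\|\nabla u\|_2^2$. For the momentum part, use $|p|\le|u||\nabla u|$ and \eqref{eq:p-L1}, which give $\|p\|_1^2\le\M(u)\|\nabla u\|_2^2$.
 \item For the second line, use $\|z\cdot\nabla\phi\|_\infty\le C/J$, which gives the bound $\tfrac CJ\M(u)\|u\|_6^6$.
 \item For the third line, use \eqref{eq:biharmonic-bound}, which gives the bound $\tfrac{C}{JR_0^2}\M(u)^2$.
\end{enumerate}
Finally, \eqref{eq:R-bound} follows from \eqref{eq:global-H1-budget} together with the Sobolev bound $\|u\|_6\lesssim\|\nabla u\|_2$.

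Most of the argument is bookkeeping. The one point needing care is the Fubini interchange with the triple integral in $(R,c,x,y)$, which we check via absolute integrability: $\int\zeta_{R,c}(x)^2\zeta_{R,c}(y)^2\dd c\lesssim R^3$, and the densities $m$, $|p|$, $|\nabla u|^2$ and $|u|^6$ all lie in $L^1$. The other detail to verify is the behaviour at $z=0$ in \eqref{eq:radial-matrix}. This causes no difficulty, because the representation there is smooth and bounded.
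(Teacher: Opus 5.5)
Your proposal is correct and follows the paper's proof essentially step for step. You substitute \eqref{eq:A-Hessian}--\eqref{eq:A-divergence} into \eqref{eq:exact-Morawetz-identity}, convert the flat part into $\mathcal Q_{J,R_0}$ via \eqref{eq:phi-def} and Lemma~\ref{lem:convolution}, and bound the three remainder lines with \eqref{eq:flat-error-bound}, \eqref{eq:p-L1}, and \eqref{eq:biharmonic-bound}, while your absolute-integrability check for the $(R,c,x,y)$ interchange and the Sobolev step controlling $\|u\|_6^6$ in \eqref{eq:R-bound} make explicit what the paper leaves implicit.
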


\begin{proof}
Starting from Proposition~\ref{prop:exact-Morawetz}, substitute \eqref{eq:A-Hessian} and \eqref{eq:A-divergence} into \eqref{eq:exact-Morawetz-identity}. The terms containing $z_j\partial_k\phi$, $z\cdot\nabla\phi$, and $\Delta\operatorname{div}A$ constitute \eqref{eq:R-explicit}. The remaining flat part is
\begin{align}
 \mathcal Q_{\mathrm{flat}}
 :={}&4\iint\phi(x-y)
 \Bigl[
 m(y)(|\nabla u(x)|^2-|u(x)|^6)
 -p(x)\cdot p(y)
 \Bigr]\dd x\dd y.
 \label{eq:Q-flat}
\end{align}
By \eqref{eq:phi-def} and Tonelli's theorem,
\begin{align}
 \mathcal Q_{\mathrm{flat}}
 ={}&\frac4J\int_{R_0}^{e^JR_0}\frac{\dd R}{R}
 \iint\Phi((x-y)/R)
 \notag\\
 &\qquad\times
 \Bigl[
 m(y)(|\nabla u(x)|^2-|u(x)|^6)
 -p(x)\cdot p(y)
 \Bigr]\dd x\dd y.
 \label{eq:Q-flat-R}
\end{align}
Applying the convolution formula in Lemma~\ref{lem:convolution} to \eqref{eq:Q-flat-R} gives
\begin{align}
 &\iint\Phi((x-y)/R)
 m(y)(|\nabla u(x)|^2-|u(x)|^6)\dd x\dd y
 =R^{-3}\int_{\R^3}M_{R,c}K_{R,c}\dd c,
 \label{eq:MK-convolution}\\
 &\iint\Phi((x-y)/R)p(x)\cdot p(y)\dd x\dd y
 =R^{-3}\int_{\R^3}|P_{R,c}|^2\dd c.
 \label{eq:PP-convolution}
\end{align}
Equations \eqref{eq:MK-convolution} and \eqref{eq:PP-convolution} show that $\mathcal Q_{\mathrm{flat}}=\mathcal Q_{J,R_0}$, proving \eqref{eq:I-Q-R}.

For the remainder, \eqref{eq:flat-error-bound}, $\int m=\M(u)$, and $\|p\|_1\le\M(u)^{1/2}\|\nabla u\|_2$ imply
\begin{align}
 &\left|4\iint z_j\partial_k\phi(z)
 m(y)\Rea(\partial_ju\,\partial_k\bar u)(x)\dd x\dd y\right|
 \le\frac{C}{J}\M(u)\|\nabla u\|_2^2,
 \label{eq:R-gradient}\\
 &\left|4\iint z_j\partial_k\phi(z)p_j(x)p_k(y)\dd x\dd y\right|
 \le\frac{C}{J}\M(u)\|\nabla u\|_2^2,\notag\\
 &\left|\frac43\iint(z\cdot\nabla\phi(z))m(y)|u(x)|^6\dd x\dd y\right|
 \le\frac{C}{J}\M(u)\|u\|_6^6.\notag
\end{align}
By \eqref{eq:biharmonic-bound},
\begin{equation}\label{eq:R-biharmonic}
 \left|\iint\Delta(\operatorname{div}A)(x-y)m(x)m(y)\dd x\dd y\right|
 \le\frac{C}{JR_0^2}\M(u)^2.
\end{equation}
Combining \eqref{eq:R-gradient}, the two adjacent estimates, and \eqref{eq:R-biharmonic} proves \eqref{eq:R-bound-explicit}; \eqref{eq:global-H1-budget} then gives \eqref{eq:R-bound}.
\end{proof}

\section{Local Galilean coercivity}

\subsection{The optimal local velocity}

When $M_{R,c}(t)>0$, define the local Galilean parameter
\[
 \xi_{R,c}(t):=\frac{P_{R,c}(t)}{M_{R,c}(t)}\in\R^3.
\]
When $M_{R,c}(t)=0$, set $\xi_{R,c}(t)=0$.

The momentum defect in the main block is removed exactly by the optimal ballwise Galilean transformation.

\begin{lemma}[Galilean completion]\label{lem:Galilean-completion}
Let
\[
 v_{R,c}(t,x):=e^{-\ii x\cdot\xi_{R,c}(t)}u(t,x).
\]
Then
\begin{equation}\label{eq:Galilean-identity}
 M_{R,c}K_{R,c}-|P_{R,c}|^2
 =M_{R,c}\int_{\R^3}\zeta_{R,c}^2
 \bigl(|\nabla v_{R,c}|^2-|v_{R,c}|^6\bigr)\dd x.
\end{equation}
\end{lemma}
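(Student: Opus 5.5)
The identity is a pointwise-in-time algebraic computation, so I fix $t$, $R$, $c$ and drop them from the notation. Write $\xi:=\xi_{R,c}$ and $v:=e^{-\ii x\cdot\xi}u$. The plan is to expand the kinetic term of $v$, observe that the potential term is unchanged, and then use the choice $\xi=P_{R,c}/M_{R,c}$ to complete the square.

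First, since $|v|=|u|$, the potential parts agree: $\int\zeta_{R,c}^2|v|^6=\int\zeta_{R,c}^2|u|^6$. Next, $\nabla v=e^{-\ii x\cdot\xi}(\nabla u-\ii\xi u)$, so pointwise
\[
 |\nabla v|^2=|\nabla u|^2-2\xi\cdot\Rea\bigl(\ii\,\overline{u}\,\nabla u\bigr)+|\xi|^2|u|^2
 =|\nabla u|^2-2\xi\cdot p[u]+|\xi|^2|u|^2,
\]
because $\Rea(\ii\bar u\nabla u)=-\Ima(\bar u\nabla u)\cdot(-1)$. The sign is the one point I will check carefully: $|a-\ii\xi b|^2=|a|^2+|\xi|^2|b|^2-2\xi\cdot\Rea(\overline{\ii b}\,a)$ with $a=\nabla u$, $b=u$, and $\Rea(-\ii\bar u\nabla u)=\Ima(\bar u\nabla u)=p$. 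Multiplying by $\zeta_{R,c}^2$ and integrating gives
\[
 \int\zeta_{R,c}^2\bigl(|\nabla v|^2-|v|^6\bigr)=K_{R,c}-2\xi\cdot P_{R,c}+|\xi|^2M_{R,c}.
\]
All integrals are finite since $u(t)\in H^1$ and $\zeta_{R,c}$ is bounded.

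When $M_{R,c}>0$, substitute $\xi=P_{R,c}/M_{R,c}$. This gives $K_{R,c}-|P_{R,c}|^2/M_{R,c}$, and multiplying by $M_{R,c}$ yields exactly \eqref{eq:Galilean-identity}.

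When $M_{R,c}=0$, we have $u=0$ a.e.\ on $\{\zeta_{R,c}>0\}$, so $p=0$ there and $P_{R,c}=0$. Hence both sides vanish: the left side is $0\cdot K_{R,c}-0$, and the right side carries the factor $M_{R,c}=0$. Equivalently, the quadratic expression $M_{R,c}K_{R,c}-2M_{R,c}\xi\cdot P_{R,c}+|\xi|^2M_{R,c}^2$ is minimized at $\xi=P/M$, which is the conceptual reason this choice is optimal.

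There is no real obstacle here. The only care needed is the sign convention relating $p=\Ima(\bar u\nabla u)$ to the cross term, and the degenerate case $M_{R,c}=0$.
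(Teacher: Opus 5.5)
Your proposal is correct and follows the paper's proof essentially verbatim: expand $|\nabla v|^2=|\nabla u|^2-2\xi\cdot p+|\xi|^2|u|^2$, use $|v|=|u|$, substitute $\xi=P_{R,c}/M_{R,c}$, and treat $M_{R,c}=0$ separately. In that degenerate case you also justify $P_{R,c}=0$, which the paper leaves implicit.

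One small cleanup: in your first display the middle expression should read $-2\xi\cdot\Rea(-\ii\bar u\nabla u)$, since $\Rea(\ii\bar u\nabla u)=-p$ and not $+p$. Your subsequent careful check already gets this sign right.
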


\begin{proof}
If $M_{R,c}=0$, both sides vanish. Suppose $M_{R,c}>0$. Since
\[
 \nabla v_{R,c}
 =e^{-\ii x\cdot\xi_{R,c}}(\nabla u-\ii\xi_{R,c}u),
\]
we have
\begin{align}
 \int\zeta_{R,c}^2|\nabla v_{R,c}|^2\dd x
 &=\int\zeta_{R,c}^2|\nabla u|^2\dd x
 -2\xi_{R,c}\cdot P_{R,c}
 +|\xi_{R,c}|^2M_{R,c}
 \notag\\
 &=\int\zeta_{R,c}^2|\nabla u|^2\dd x
 -\frac{|P_{R,c}|^2}{M_{R,c}}.
 \label{eq:local-kinetic-completion}
\end{align}
Because $|v_{R,c}|=|u|$, multiplying \eqref{eq:local-kinetic-completion} by $M_{R,c}$ and subtracting
\[
 M_{R,c}\int\zeta_{R,c}^2|u|^6\dd x
\]
proves \eqref{eq:Galilean-identity}.
\end{proof}

Combining this identity with the below-threshold Sobolev gap gives the localized coercive estimate.

\begin{lemma}[Localized coercivity]\label{lem:local-coercivity}
There exist $c_*>0$ and $C<\infty$, where $c_*$ depends only on the below-threshold gap and $C$ only on the fixed cutoff $\zeta$, such that for all $R>0$, $c\in\R^3$, and $t\ge0$,
\begin{align}
 M_{R,c}K_{R,c}-|P_{R,c}|^2
 \ge{}&c_*M_{R,c}
 \|\nabla(\zeta_{R,c}v_{R,c})\|_2^2
 \notag\\
 &-CR^{-2}M_{R,c}L_{R,c}.
 \label{eq:local-coercivity}
\end{align}
\end{lemma}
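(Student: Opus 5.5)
We start from the Galilean completion in Lemma~\ref{lem:Galilean-completion}. Since $M_{R,c}\ge0$, it is enough to show, for every $R,c,t$,
\[
 \int\zeta_{R,c}^2\bigl(|\nabla v|^2-|v|^6\bigr)\dd x
 \ge c_*\|\nabla(\zeta_{R,c}v)\|_2^2-CR^{-2}L_{R,c},
\]
where $v=v_{R,c}(t)$. Multiplying this by $M_{R,c}$ then gives \eqref{eq:local-coercivity}. Only $\zeta$ and $v$ appear here, so $\xi_{R,c}$ plays no further role.

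\textbf{Step 1: localize the kinetic term.} The standard IMS-type identity gives
\[
 \int\zeta^2|\nabla v|^2=\|\nabla(\zeta v)\|_2^2-\int|\nabla\zeta|^2|v|^2 .
\]
It follows from $|\nabla(\zeta v)|^2=\zeta^2|\nabla v|^2+|\nabla\zeta|^2|v|^2+2\zeta\Rea(\bar v\nabla\zeta\cdot\nabla v)$ together with
\[
 2\Rea\int\zeta\bar v\nabla\zeta\cdot\nabla v=\tfrac12\int\nabla(\zeta^2)\cdot\nabla|v|^2=-\tfrac12\int\Delta(\zeta^2)|v|^2 ,
\]
and at worst leaves an error bounded by $\|\nabla\zeta\|_\infty^2$ and $\|\Delta\zeta^2\|_\infty$ times $\int_{B(c,2R)}|v|^2$. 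Since $|\nabla\zeta_{R,c}|\lesssim R^{-1}$, $|\Delta\zeta_{R,c}^2|\lesssim R^{-2}$, both supported in $B(c,2R)$, and $|v|=|u|$, this error is at most $CR^{-2}L_{R,c}$.

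\textbf{Step 2: control the potential term by the Sobolev gap.} Because $0\le\zeta\le1$, we have $\zeta^2|v|^6\le|\zeta v|^2|u|^4$. Hölder and the sharp Sobolev inequality \eqref{eq:sharp-Sobolev} then give
\[
 \int\zeta^2|v|^6\le\|\zeta v\|_6^2\|u\|_6^4\le\cS^2\|u\|_6^4\,\|\nabla(\zeta v)\|_2^2\le(1-\delta_0)\|\nabla(\zeta v)\|_2^2 ,
\]
where the last step is \eqref{eq:L6-gap} from Lemma~\ref{lem:coercivity}, which holds uniformly in time. The key is to put only two powers of $|v|$ on the localized function and keep the remaining $|u|^4$ global. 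Doing so avoids any $\zeta^6$ versus $\zeta^2$ mismatch and any need to localize the threshold gap itself. The Galilean phase does not affect $|v|$, and it costs nothing, because Step 1 used only $\nabla v$.

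\textbf{Step 3: combine.} Steps 1 and 2 together give the claim with $c_*=\delta_0$. The main point requiring care is Step 2, which is the reason for choosing the splitting $\zeta^2|v|^6\le|\zeta v|^2|u|^4$ rather than a localized energy argument. A secondary point is that $\|\nabla(\zeta v)\|_2$ is finite and \eqref{eq:sharp-Sobolev} applies to $\zeta v\in H^1$. This is immediate from $u\in H^1$ and the boundedness of $\xi_{R,c}$ when $M_{R,c}>0$; when $M_{R,c}=0$ both sides vanish.
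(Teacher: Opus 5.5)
Your proposal is correct and is essentially the paper's proof: you reduce via the Galilean completion \eqref{eq:Galilean-identity}, bound $\int\zeta_{R,c}^2|u|^6=\int|\zeta_{R,c}v_{R,c}|^2|u|^4$ by H\"older, sharp Sobolev and \eqref{eq:L6-gap}, and pay an $O(R^{-2})L_{R,c}$ cutoff error, which yields $c_*=\delta_0$. The only slip is your displayed ``IMS'' identity, which drops the term $\frac12\int\Delta(\zeta^2)|v|^2$; the exact identity (as in the paper) is $\int\zeta^2|\nabla v|^2=\|\nabla(\zeta v)\|_2^2+\int\zeta\Delta\zeta\,|v|^2$, but since you absorb all cutoff terms into $CR^{-2}L_{R,c}$ anyway, the conclusion is unaffected.
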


\begin{proof}
By H\"older's inequality, Sobolev embedding, and \eqref{eq:L6-gap},
\begin{align}
 \int\zeta_{R,c}^2|u|^6\dd x
 &=\int|\zeta_{R,c}v_{R,c}|^2|u|^4\dd x
 \notag\\
 &\le\|\zeta_{R,c}v_{R,c}\|_6^2\|u\|_6^4
 \notag\\
 &\le \cS^2\|u\|_6^4
 \|\nabla(\zeta_{R,c}v_{R,c})\|_2^2
 \notag\\
 &\le(1-\delta_0)
 \|\nabla(\zeta_{R,c}v_{R,c})\|_2^2.
 \label{eq:localized-potential-control}
\end{align}
On the other hand,
\begin{align}
 \|\nabla(\zeta_{R,c}v_{R,c})\|_2^2
 ={}&\int\zeta_{R,c}^2|\nabla v_{R,c}|^2\dd x
 +\int|\nabla\zeta_{R,c}|^2|v_{R,c}|^2\dd x
 \notag\\
 &+2\Rea\int
 \zeta_{R,c}\nabla\zeta_{R,c}\cdot
 \overline{v_{R,c}}\nabla v_{R,c}\dd x.
 \label{eq:cutoff-expand}
\end{align}
Using $2\Rea(\overline v\nabla v)=\nabla|v|^2$ and integrating by parts,
\begin{align}
 2\Rea\int
 \zeta_{R,c}\nabla\zeta_{R,c}\cdot
 \overline{v_{R,c}}\nabla v_{R,c}\dd x
 &=-\int\bigl(\zeta_{R,c}\Delta\zeta_{R,c}
 +|\nabla\zeta_{R,c}|^2\bigr)|v_{R,c}|^2\dd x.
 \label{eq:cross-parts}
\end{align}
Substitution of \eqref{eq:cross-parts} into \eqref{eq:cutoff-expand} gives the exact identity
\begin{equation}\label{eq:cutoff-gradient-identity}
 \int\zeta_{R,c}^2|\nabla v_{R,c}|^2\dd x
 =\|\nabla(\zeta_{R,c}v_{R,c})\|_2^2
 +\int\zeta_{R,c}\Delta\zeta_{R,c}|v_{R,c}|^2\dd x.
\end{equation}
By scaling,
\begin{equation}\label{eq:zeta-laplacian-bound}
 |\zeta_{R,c}\Delta\zeta_{R,c}|
 \le CR^{-2}\one_{B(c,2R)}.
\end{equation}
Combining \eqref{eq:localized-potential-control}, \eqref{eq:cutoff-gradient-identity}, and \eqref{eq:zeta-laplacian-bound} with Lemma~\ref{lem:Galilean-completion}, specifically \eqref{eq:Galilean-identity}, yields
\begin{align*}
 M_{R,c}K_{R,c}-|P_{R,c}|^2
 &\ge \delta_0M_{R,c}
 \|\nabla(\zeta_{R,c}v_{R,c})\|_2^2
 -CR^{-2}M_{R,c}L_{R,c}.
\end{align*}
Taking $c_*:=\delta_0$ proves the lemma.
\end{proof}

\subsection{Nondegeneracy of the core}

The bounded-scale hypothesis converts compactness modulo scaling into compactness modulo translation at a fixed physical scale.

\begin{lemma}[Physical-space precompactness]\label{lem:physical-precompactness}
Suppose that $u$ is almost periodic modulo scaling and translation and that
\[
 0<N_-\le N(t)\le N_+<\infty.
\]
Then
\[
 \mathcal O_u:=\{u(t,\cdot+x(t)):t\ge0\}
\]
is precompact in $\dot H^1(\R^3)$.
\end{lemma}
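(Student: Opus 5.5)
The plan is to realise $\mathcal O_u$ as the image of a precompact set under a continuous map. Write $g(t):=N(t)^{-1/2}u(t,x(t)+\cdot/N(t))\in\mathcal K_u$, so that $u(t,\cdot+x(t))=N(t)^{1/2}g(t)(N(t)\,\cdot)$. Define
\[
 \Lambda:[N_-,N_+]\times\dot H^1(\R^3)\to\dot H^1(\R^3),
 \qquad
 \Lambda(\lambda,f):=\lambda^{1/2}f(\lambda\,\cdot).
\]
By \eqref{eq:main-bounded-scale}, $\mathcal O_u\subset\Lambda\bigl([N_-,N_+]\times\overline{\mathcal K_u}\bigr)$. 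The set $[N_-,N_+]\times\overline{\mathcal K_u}$ is compact, since $\mathcal K_u$ is precompact by Definition~\ref{def:ap}. Hence it suffices to show that $\Lambda$ is jointly continuous; then the image of this compact set is compact, and its subset $\mathcal O_u$ is precompact.

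For joint continuity, each $\Lambda(\lambda,\cdot)$ is an isometry of $\dot H^1$, because the scaling preserves the $\dot H^1$ norm. This gives
\[
 \|\Lambda(\lambda_n,f_n)-\Lambda(\lambda,f)\|_{\dot H^1}
 \le\|f_n-f\|_{\dot H^1}+\|\Lambda(\lambda_n,f)-\Lambda(\lambda,f)\|_{\dot H^1}.
\]
It therefore remains to prove strong continuity of $\lambda\mapsto\Lambda(\lambda,f)$ for fixed $f$. For $f\in C_c^\infty$ this follows from dominated convergence, since $\lambda_n$ stays in a compact subinterval of $(0,\infty)$ and the supports are uniformly bounded. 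A general $f$ is then handled by a density argument with an $\varepsilon/3$ split, using again the isometry property.

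An equivalent sequential formulation avoids the map $\Lambda$. Given $t_n$, pass to a subsequence with $g(t_n)\to g_*$ in $\dot H^1$ and $N(t_n)\to N_*\in[N_-,N_+]$. The same estimate then gives $u(t_n,\cdot+x(t_n))\to\Lambda(N_*,g_*)$ in $\dot H^1$.

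No step here is a real obstacle. The only point needing care is the strong continuity of the dilation group, which is where the two-sided bound $0<N_-\le N(t)\le N_+$ enters. Without it, weak convergence to zero under dilation, as in the properness property used in Lemma~\ref{lem:lower-bounded-modulation-selection}, would destroy precompactness.
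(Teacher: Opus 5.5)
Your proposal is correct and follows the paper's proof essentially verbatim: the paper also writes $u(t,\cdot+x(t))=D_{N(t)}\bigl[N(t)^{-1/2}u(t,x(t)+\cdot/N(t))\bigr]$ and concludes from joint continuity of $(f,\lambda)\mapsto D_\lambda f$ on the compact set $\overline{\mathcal K_u}\times[N_-,N_+]$. Your isometry-plus-density verification of that joint continuity fills in a step the paper only asserts.
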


\begin{proof}
Define
\[
 (D_\lambda f)(x):=\lambda^{1/2}f(\lambda x),
 \qquad
 \overline{\mathcal K_u}^{\,\dot H^1}=: \mathcal K.
\]
Both $\mathcal K$ and $[N_-,N_+]$ are compact, and
\begin{equation}\label{eq:physical-orbit-inclusion}
 u(t,\cdot+x(t))
 =D_{N(t)}\left[N(t)^{-1/2}
 u\left(t,x(t)+\frac{\cdot}{N(t)}\right)\right]
 \in\{D_\lambda f:(f,\lambda)\in\mathcal K\times[N_-,N_+]\}.
\end{equation}
The map $(f,\lambda)\mapsto D_\lambda f$ is jointly continuous from $\dot H^1\times[N_-,N_+]$ to $\dot H^1$. Its image of the compact product set is compact, and \eqref{eq:physical-orbit-inclusion} proves the claim.
\end{proof}

Physical precompactness and nontriviality prevent the concentration core from degenerating.

\begin{lemma}[Core nondegeneracy]\label{lem:core-nondegeneracy}
Under the hypotheses of Theorem~\ref{thm:finite-mass-rigidity}, if $u\not\equiv0$, then there exist $R_*,m_*,\nu_*>0$ such that
\begin{equation}\label{eq:core-mass-l6}
 \int_{B(x(t),R_*)}|u(t,x)|^2\dd x\ge m_*,
 \qquad
 \int_{B(x(t),R_*)}|u(t,x)|^6\dd x\ge\nu_*
\end{equation}
for every $t\ge0$.
\end{lemma}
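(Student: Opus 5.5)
The plan is to obtain both bounds in \eqref{eq:core-mass-l6} from compactness of the translated physical orbit, Lemma~\ref{lem:physical-precompactness}, together with a continuity argument on its closure. Set $\mathcal O_u:=\{u(t,\cdot+x(t)):t\ge0\}$ and let $\overline{\mathcal O_u}$ be its $\dot H^1$-closure, which is compact by Lemma~\ref{lem:physical-precompactness}.

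\emph{Step 1: the closure avoids zero.} Since $u\not\equiv0$ lies on the stable below-threshold branch \eqref{eq:main-threshold}, Corollary~\ref{cor:gradient-nondegenerate} gives $\|\nabla u(t)\|_2^2\ge a_u>0$ for all $t\ge0$. The gradient norm is invariant under translation and continuous on $\dot H^1$, so every $f\in\overline{\mathcal O_u}$ satisfies $\|\nabla f\|_2^2\ge a_u$. In particular $0\notin\overline{\mathcal O_u}$.

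\emph{Step 2: the $L^6$ bound.} The map $f\mapsto\|f\|_6$ is continuous on $\dot H^1$ by Sobolev embedding, and it is strictly positive on $\overline{\mathcal O_u}$, because an $\dot H^1$ function (which lies in $L^6$ by definition) with zero $L^6$ norm vanishes. Compactness therefore gives $2\nu_*:=\min_{\overline{\mathcal O_u}}\|f\|_6^6>0$. Next I use uniform tightness of a compact subset of $L^6$: there is $R_*$ with
\[
\sup_{f\in\overline{\mathcal O_u}}\int_{|x|\ge R_*}|f|^6\dd x\le\nu_*.
\]
This follows from a finite $\varepsilon$-net in $L^6$ and the tail smallness of each net element. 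Consequently $\int_{B(0,R_*)}|f|^6\dd x\ge\nu_*$ for all $f\in\overline{\mathcal O_u}$. Undoing the translation at $f=u(t,\cdot+x(t))$ yields the second inequality in \eqref{eq:core-mass-l6}.

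\emph{Step 3: the local mass bound.} The functional $f\mapsto\int_{B(0,R_*)}|f|^2\dd x$ is continuous on $\dot H^1$. Indeed, by H\"older on the bounded ball,
\[
\|f\|_{L^2(B(0,R_*))}\le CR_*\|f\|_{L^6}\lesssim R_*\|\nabla f\|_2 .
\]
This functional is strictly positive on $\overline{\mathcal O_u}$, because each such $f$ has $\int_{B(0,R_*)}|f|^6\ge\nu_*>0$ and so is not a.e.\ zero on the ball. Compactness again gives a positive minimum $m_*$, and translating back gives the first inequality in \eqref{eq:core-mass-l6}.

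The only point needing care is Step 3. The $L^6$ lower bound does not by itself bound the local mass from below via H\"older, since H\"older runs the wrong way. The argument therefore relies on passing through the compact closure: one takes the minimum of a continuous, strictly positive functional, rather than deriving a quantitative inequality. Note that finite mass is not needed here; only the $\dot H^1$ compactness is used.
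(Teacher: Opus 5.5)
Your proof is correct and uses the same ingredients as the paper: physical precompactness of the translated orbit (Lemma~\ref{lem:physical-precompactness}), gradient nondegeneracy to guarantee $0\notin\overline{\mathcal O_u}$, and the H\"older bound $\|f\|_{L^2(B(0,R))}\lesssim R\|f\|_6$ on balls. The paper argues by contradiction, taking times $t_n$ with small $L^2$ (resp.\ $L^6$) mass on $B(x(t_n),n)$, extracting a limit point that must vanish, and then taking the larger of the two radii; you instead fix one radius by $L^6$ tightness, minimize continuous positive functionals on the compact closure, and deduce the local-mass bound from the $L^6$ bound, which is only a difference of presentation.
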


\begin{proof}
By Lemma~\ref{lem:physical-precompactness} and Corollary~\ref{cor:gradient-nondegenerate},
\begin{equation}\label{eq:orbit-away-zero}
 \mathcal O_u\Subset\dot H^1,
 \qquad
 \inf_{f\in\mathcal O_u}\|\nabla f\|_2>0,
 \qquad
 0\notin\overline{\mathcal O_u}^{\,\dot H^1}.
\end{equation}
Suppose first that no uniform local-mass lower bound exists. Then there are $t_n\ge0$ such that
\[
 \int_{B(x(t_n),n)}|u(t_n,x)|^2\dd x\le n^{-1}.
\]
Set
\begin{equation}\label{eq:f-n-translate}
 f_n(x):=u(t_n,x+x(t_n)).
\end{equation}
By precompactness, after passing to a subsequence,
\begin{equation}\label{eq:f-n-H1-converge}
 f_n\to f\quad\text{in }\dot H^1,
 \qquad
 f_n\to f\quad\text{in }L^6.
\end{equation}
For fixed $R>0$ and $n>R$,
\begin{align*}
 \|f\|_{L^2(B(0,R))}
 &\le\|f-f_n\|_{L^2(B(0,R))}
     +\|f_n\|_{L^2(B(0,R))}\\
 &\le |B(0,R)|^{1/3}\|f-f_n\|_6+n^{-1/2}
 \longrightarrow0.
\end{align*}
Thus $f=0$, contradicting \eqref{eq:orbit-away-zero}--\eqref{eq:f-n-H1-converge}. Hence there exist $R_*^{(2)},m_*>0$ such that
\[
 \int_{B(x(t),R_*^{(2)})}|u(t,x)|^2\dd x\ge m_*
 \qquad(t\ge0).
\]
If no uniform local $L^6$ lower bound exists, the same argument gives times $t_n$ satisfying
\[
 \int_{B(x(t_n),n)}|u(t_n,x)|^6\dd x\le n^{-1}.
\]
Along the subsequence in \eqref{eq:f-n-translate}--\eqref{eq:f-n-H1-converge}, for every fixed $R$,
\[
 \|f\|_{L^6(B(0,R))}
 \le\|f-f_n\|_6+n^{-1/6}
 \longrightarrow0,
\]
again forcing $f=0$. Therefore there exist $R_*^{(6)},\nu_*>0$ with
\[
 \int_{B(x(t),R_*^{(6)})}|u(t,x)|^6\dd x\ge\nu_*
 \qquad(t\ge0).
\]
Taking $R_*:=\max\{R_*^{(2)},R_*^{(6)}\}$ proves \eqref{eq:core-mass-l6}.
\end{proof}

\begin{figure}[htbp]
\centering
\begin{tikzpicture}[scale=0.94, every node/.style={font=\small}]
  \coordinate (Xc) at (0,0);
  \coordinate (Cc) at (0.62,0.30);
  \draw[densely dashed,thick] (Cc) circle (2.85);
  \draw[thick] (Cc) circle (1.50);
  \draw[dotted,thick] (Xc) circle (0.68);
  \fill[gray!25] (Xc) circle (0.34);
  \draw[thick] (Xc) circle (0.34);
  \fill (Xc) circle (1.7pt);
  \node at (0,-0.88) {$x(t)$};
  \fill (Cc) circle (1.7pt) node[above right=2pt] {$c$};
  \draw[thin] (Xc)--(Cc);
  \node[anchor=east] (labA) at (-2.35,1.38) {$B(x(t),R/4)$};
  \draw[thin] (labA.east)--(-0.48,0.48);
  \node[anchor=east] (labB) at (-2.35,-1.18) {$B(x(t),R_*)$};
  \draw[thin] (labB.east)--(-0.25,-0.24);
  \node[anchor=west] (labC) at (2.38,1.35) {$B(c,R)$};
  \draw[thin] (labC.west)--(1.70,0.92);
  \node[anchor=west] (labD) at (3.05,-1.72) {$B(c,2R)$};
  \draw[thin] (labD.west)--(2.75,-1.05);
\end{tikzpicture}
\caption{Containment of the core ball by the localized cutoff ball. The shaded core lies entirely in the region where $\zeta_{R,c}=1$.}
\label{fig:core-inclusion}
\end{figure}

Figure~\ref{fig:core-inclusion} records the geometric containment used below. If $R\ge8R_*$ and $c\in B(x(t),R/4)$, then $B(x(t),R_*)\subset B(c,R)$, so the cutoff equals one on the entire concentration core. Consequently, the local mass and local $L^6$ lower bounds enter the same Galilean block uniformly for a whole family of centers.

This containment yields a positive spatial average of the local Galilean main term.

\begin{lemma}[Positive spatial average]\label{lem:positive-spatial-average}
Under the hypotheses of Theorem~\ref{thm:finite-mass-rigidity}, with $u\not\equiv0$, there exist $R_0<\infty$ and $c_1>0$ such that for all $R\ge R_0$ and $t\ge0$,
\begin{equation}\label{eq:positive-spatial-average}
 R^{-3}\int_{\R^3}M_{R,c}(t)
 \|\nabla(\zeta_{R,c}v_{R,c})(t)\|_2^2\dd c
 \ge c_1.
\end{equation}
\end{lemma}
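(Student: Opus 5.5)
We restrict the $c$-integral in \eqref{eq:positive-spatial-average} to the ball $B(x(t),R/4)$, whose volume is $\simeq R^3$, so it suffices to show that for every $R\ge R_0$, every $t\ge0$ and every such center $c$, the integrand $M_{R,c}(t)\|\nabla(\zeta_{R,c}v_{R,c})(t)\|_2^2$ is bounded below by a constant independent of $R,c,t$. Take $R_*,m_*,\nu_*$ from Lemma~\ref{lem:core-nondegeneracy} and set $R_0:=8R_*$. For $R\ge R_0$ and $c\in B(x(t),R/4)$ we have $B(x(t),R_*)\subset B(c,R)$, so $\zeta_{R,c}\equiv1$ on the core ball (Figure~\ref{fig:core-inclusion}).

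The mass factor is handled directly: since $\zeta_{R,c}=1$ on $B(x(t),R_*)$, we get $M_{R,c}(t)\ge\int_{B(x(t),R_*)}|u(t)|^2\dd x\ge m_*$.

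For the gradient factor, the phase $e^{-\ii x\cdot\xi_{R,c}}$ is unknown and uncontrolled, so we use only the modulus-invariant $L^6$ quantity. Sobolev gives
\[
\|\nabla(\zeta_{R,c}v_{R,c})\|_2^2\ge\cS^{-2}\|\zeta_{R,c}v_{R,c}\|_6^2
=\cS^{-2}\|\zeta_{R,c}u\|_6^2
\ge\cS^{-2}\Bigl(\int_{B(x(t),R_*)}|u|^6\dd x\Bigr)^{1/3}\ge\cS^{-2}\nu_*^{1/3},
\]
where we used $|v_{R,c}|=|u|$ and $0\le\zeta_{R,c}\le1$, with $\zeta_{R,c}=1$ on the core ball. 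This step is the one to watch. A naive attempt to bound the localized gradient of $v_{R,c}$ by that of $u$ fails because $\xi_{R,c}$ could be large. Passing to $L^6$ sidesteps the Galilean phase entirely, which is why Lemma~\ref{lem:core-nondegeneracy} records an $L^6$ lower bound on the core. Note also that $\zeta_{R,c}v_{R,c}\in H^1$ has compact support, so Sobolev applies.

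Multiplying the two bounds gives the pointwise lower bound $m_*\cS^{-2}\nu_*^{1/3}$ on $B(x(t),R/4)$. Using nonnegativity of the integrand elsewhere, we obtain
\[
R^{-3}\int_{\R^3}M_{R,c}\|\nabla(\zeta_{R,c}v_{R,c})\|_2^2\dd c\ge R^{-3}|B(0,R/4)|\,m_*\cS^{-2}\nu_*^{1/3}=:c_1>0,
\]
which is independent of $R\ge R_0$ and of $t$. Measurability in $c$ of the integrand is routine: when $M_{R,c}>0$, the maps $c\mapsto M_{R,c},P_{R,c}$ are continuous, and the integrand is nonnegative. This proves \eqref{eq:positive-spatial-average}.
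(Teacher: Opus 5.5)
Your proposal is correct and follows the paper's proof essentially verbatim. You make the same choice $R_0=8R_*$, restrict the $c$-integral to $B(x(t),R/4)$ where $\zeta_{R,c}\equiv1$ on the core ball, and use the mass bound $M_{R,c}\ge m_*$ together with the Sobolev bound $\cS^{-2}\nu_*^{1/3}$ through $|v_{R,c}|=|u|$, yielding the same $c_1=|B(0,1/4)|\,m_*\cS^{-2}\nu_*^{1/3}$.
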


\begin{proof}
Choose $R_*,m_*,\nu_*$ from Lemma~\ref{lem:core-nondegeneracy} and set
\begin{equation}\label{eq:R0-choice-basic}
 R_0:=8R_*.
\end{equation}
Fix $R\ge R_0$ and $c\in B(x(t),R/4)$. If $x\in B(x(t),R_*)$, then
\begin{equation}\label{eq:x-c-distance}
 |x-c|\le |x-x(t)|+|x(t)-c|
 \le R_*+R/4
 \le3R/8<R.
\end{equation}
By \eqref{eq:zeta-assumption}, \eqref{eq:zeta-Rc}, and \eqref{eq:x-c-distance},
\[
 \zeta_{R,c}(x)=1
 \qquad(x\in B(x(t),R_*)).
\]
Thus \eqref{eq:core-mass-l6} gives
\begin{equation}\label{eq:local-M-lower}
 M_{R,c}(t)\ge m_*,
\end{equation}
and, because $|v_{R,c}|=|u|$,
\[
 \|\zeta_{R,c}v_{R,c}\|_6^6\ge\nu_*.
\]
Sobolev embedding yields
\begin{equation}\label{eq:localized-gradient-lower}
 \|\nabla(\zeta_{R,c}v_{R,c})\|_2^2
 \ge \cS^{-2}\nu_*^{1/3}.
\end{equation}
Integrating \eqref{eq:local-M-lower} and \eqref{eq:localized-gradient-lower} over $c\in B(x(t),R/4)$,
\begin{align*}
 R^{-3}\int_{\R^3}M_{R,c}
 \|\nabla(\zeta_{R,c}v_{R,c})\|_2^2\dd c
 &\ge |B(0,1/4)|m_*\cS^{-2}\nu_*^{1/3}.
\end{align*}
Taking $c_1:=|B(0,1/4)|m_*\cS^{-2}\nu_*^{1/3}$ proves the claim.
\end{proof}

\section{Finite-mass rigidity}

\subsection{A local-mass budget}

The square-integral estimate needed to control the cutoff error is the following Fubini bound.

\begin{lemma}[Fubini bound]\label{lem:Fubini}
For every $R>0$ and $t\ge0$,
\begin{equation}\label{eq:Fubini-bound}
 \int_{\R^3}L_{R,c}(t)^2\dd c
 \le |B(0,2R)|\,\M(u)^2
 \le CR^3\M(u)^2.
\end{equation}
\end{lemma}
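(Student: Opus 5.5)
The plan is to reduce the estimate to an application of Fubini's theorem in the center variable $c$. Write $L_{R,c}(t)=\int\one_{B(c,2R)}(x)|u(t,x)|^2\dd x$. The membership condition is symmetric: $x\in B(c,2R)$ exactly when $c\in B(x,2R)$.

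The first step is to bound one factor of $L_{R,c}$ trivially. Since $L_{R,c}(t)\le\M(u)$, we have
\[
 L_{R,c}(t)^2\le\M(u)\,L_{R,c}(t).
\]
The second step is to integrate this in $c$ and exchange the order of integration by Tonelli, which applies because the integrand is nonnegative:
\[
 \int_{\R^3}L_{R,c}(t)\dd c
 =\int_{\R^3}|u(t,x)|^2\left(\int_{\R^3}\one_{B(x,2R)}(c)\dd c\right)\dd x
 =|B(0,2R)|\,\M(u).
\]
Combining the two steps gives
\[
 \int L_{R,c}^2\dd c\le|B(0,2R)|\,\M(u)^2.
\]
Finally, $|B(0,2R)|=\tfrac{32\pi}{3}R^3$, which yields the second inequality with an absolute constant $C$.

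Mass conservation \eqref{eq:global-H1-budget} makes the bound uniform in $t$. No real obstacle is expected. The only point to watch is that measurability in $c$ and the interchange are justified by Tonelli, since everything involved is nonnegative.
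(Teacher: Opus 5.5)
Your proof is correct and uses the same key idea as the paper: Tonelli in the center variable $c$, which produces the constant $|B(0,2R)|$. The only difference is in the bookkeeping. The paper expands $L_{R,c}^2$ as a double integral in $(x,y)$ and bounds the overlap volume $|B(x,2R)\cap B(y,2R)|\le|B(0,2R)|$. You instead bound one factor by $\M(u)$ first, so a single exchange of integration order on the linear quantity $\int L_{R,c}\dd c$ suffices.
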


\begin{proof}
By \eqref{eq:local-L} and Tonelli's theorem,
\begin{align*}
 \int L_{R,c}^2\dd c
 &=\int\left(\int\one_{B(c,2R)}(x)|u(x)|^2\dd x\right)
 \left(\int\one_{B(c,2R)}(y)|u(y)|^2\dd y\right)\dd c\\
 &=\iint |u(x)|^2|u(y)|^2
 |B(x,2R)\cap B(y,2R)|\dd x\dd y.
\end{align*}
The intersection has volume at most $|B(0,2R)|$. Hence
\[
 \int L_{R,c}^2\dd c
 \le |B(0,2R)|
 \left(\int|u(x)|^2\dd x\right)
 \left(\int|u(y)|^2\dd y\right)
 =|B(0,2R)|\M(u)^2.
\]
\end{proof}

\subsection{Positivity of the main block}

Localized coercivity, the positive spatial average, and the Fubini bound combine to give a uniform lower bound for the logarithmically averaged main block.

\begin{proposition}[Main-block positivity]\label{prop:Q-positive}
Under the hypotheses of Theorem~\ref{thm:finite-mass-rigidity}, with $u\not\equiv0$, let $R_0$ be as in \eqref{eq:R0-choice-basic}. There exist $c_2>0$ and $C_u<\infty$ such that for all $J\ge1$ and $t\ge0$,
\begin{equation}\label{eq:Q-positive-final}
 \mathcal Q_{J,R_0}(t)
 \ge c_2-\frac{C_u}{JR_0^2}.
\end{equation}
\end{proposition}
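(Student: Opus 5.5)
The plan is to insert the pointwise lower bound of Lemma~\ref{lem:local-coercivity} into the definition \eqref{eq:Q-def} of the main block. Then we treat the two resulting pieces separately: the coercive piece via Lemma~\ref{lem:positive-spatial-average}, and the cutoff error via Lemma~\ref{lem:Fubini}. For every $R\in[R_0,e^JR_0]$, $c\in\R^3$ and $t\ge0$, Lemma~\ref{lem:local-coercivity} gives
\[
 M_{R,c}K_{R,c}-|P_{R,c}|^2\ge c_*M_{R,c}\|\nabla(\zeta_{R,c}v_{R,c})\|_2^2-CR^{-2}M_{R,c}L_{R,c}.
\]
Before integrating this in $c$ against $R^{-3}\dd c$ and averaging with $\frac4J\frac{\dd R}{R}$, we will note that each term is integrable in $c$. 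Tonelli gives $\int M_{R,c}\,\dd c\lesssim R^3\M(u)$ and $\int M_{R,c}\|\nabla(\zeta_{R,c}v_{R,c})\|_2^2\,\dd c<\infty$, using $\|\nabla(\zeta v)\|_2^2\lesssim\|\nabla u\|_2^2+|\xi_{R,c}|^2M_{R,c}+R^{-2}M$ and $|P_{R,c}|^2\le M_{R,c}\int\zeta_{R,c}^2|\nabla u|^2$.

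\emph{Coercive piece.} Since $R\ge R_0=8R_*$, Lemma~\ref{lem:positive-spatial-average} applies at every scale in the averaging window, uniformly in $t$. It gives $R^{-3}\int M_{R,c}\|\nabla(\zeta_{R,c}v_{R,c})\|_2^2\,\dd c\ge c_1$. The logarithmic measure $\frac1J\frac{\dd R}{R}$ on $[R_0,e^JR_0]$ has total mass $1$, so this contributes at least $4c_*c_1=:c_2$.

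\emph{Error piece.} By \eqref{eq:M-le-L}, Cauchy--Schwarz is not even needed, since $M_{R,c}L_{R,c}\le L_{R,c}^2$. Lemma~\ref{lem:Fubini} then yields
\[
 R^{-3}\int CR^{-2}M_{R,c}L_{R,c}\,\dd c\le CR^{-2}\M(u)^2.
\]
Averaging,
\[
 \frac4J\int_{R_0}^{e^JR_0}CR^{-2}\M(u)^2\frac{\dd R}{R}\le\frac{2C\M(u)^2}{JR_0^2}.
\]
This is $C_u/(JR_0^2)$ with $C_u$ depending only on the conserved mass \eqref{eq:global-H1-budget} and on $\zeta$. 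Combining the two pieces gives \eqref{eq:Q-positive-final}.

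The main obstacle is the error piece: the naive bound $R^{-2}M\cdot\M(u)$ per center is useless after integrating over all $c\in\R^3$. The Fubini bound is exactly what converts the $c$-integral into an $R^3\M(u)^2$ factor that cancels the $R^{-3}$ normalization, so this step must be arranged carefully. One also has to check that $c_1$ does not depend on $J$. That holds because Lemma~\ref{lem:positive-spatial-average} is uniform for all $R\ge R_0$, so $c_2$ is independent of $J$ as required.
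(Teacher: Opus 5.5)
Your proposal is correct and follows essentially the same route as the paper: insert Lemma~\ref{lem:local-coercivity} into \eqref{eq:Q-def}, then bound the coercive piece below by $c_1$ for every $R\ge R_0$ via Lemma~\ref{lem:positive-spatial-average}, and control the cutoff error by $M_{R,c}\le L_{R,c}$ and Lemma~\ref{lem:Fubini}, so that the logarithmic average yields $c_2=4c_*c_1$ and an error of size $O\bigl(\M(u)^2/(JR_0^2)\bigr)$. Your preliminary check that each term is integrable in $c$, which justifies splitting the integral of the difference, is a small piece of bookkeeping that the paper leaves implicit.
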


\begin{proof}
By \eqref{eq:local-coercivity}, for every $R\ge R_0$,
\begin{align}
 &R^{-3}\int_{\R^3}
 \bigl(M_{R,c}K_{R,c}-|P_{R,c}|^2\bigr)\dd c
 \notag\\
 &\quad\ge c_*R^{-3}\int M_{R,c}
 \|\nabla(\zeta_{R,c}v_{R,c})\|_2^2\dd c
 -CR^{-5}\int M_{R,c}L_{R,c}\dd c.
 \label{eq:Q-one-scale-lower}
\end{align}
The first term satisfies
\begin{equation}\label{eq:positive-one-scale}
 c_*R^{-3}\int M_{R,c}
 \|\nabla(\zeta_{R,c}v_{R,c})\|_2^2\dd c
 \ge c_*c_1
\end{equation}
by \eqref{eq:positive-spatial-average}. By \eqref{eq:M-le-L} and the Fubini estimate in Lemma~\ref{lem:Fubini}, namely \eqref{eq:Fubini-bound},
\begin{align}
 R^{-5}\int M_{R,c}L_{R,c}\dd c
 &\le R^{-5}\int L_{R,c}^2\dd c
 \le C\M(u)^2R^{-2}.
 \label{eq:cutoff-error-one-scale}
\end{align}
Combining \eqref{eq:Q-one-scale-lower}, \eqref{eq:positive-one-scale}, and \eqref{eq:cutoff-error-one-scale} gives
\[
 R^{-3}\int
 \bigl(M_{R,c}K_{R,c}-|P_{R,c}|^2\bigr)\dd c
 \ge c_*c_1-C_uR^{-2}.
\]
Substituting into \eqref{eq:Q-def},
\begin{align*}
 \mathcal Q_{J,R_0}(t)
 &\ge\frac4J\int_{R_0}^{e^JR_0}
 \left(c_*c_1-C_uR^{-2}\right)\frac{\dd R}{R}\\
 &\ge4c_*c_1-\frac{C_u}{JR_0^2}.
\end{align*}
Taking $c_2:=4c_*c_1$ proves the proposition.
\end{proof}

\subsection{Passage to low regularity}

The smooth identity and main-block decomposition of Propositions~\ref{prop:exact-Morawetz} and \ref{prop:Q-remainder} must be transferred to the $H^1$ strong solutions allowed by the main theorem.

\begin{proposition}[Low-regularity transfer]\label{prop:low-regularity-transfer}
For a function $v$ for which the relevant integrals are defined, let
\[
 \mathcal I_{J,R_0}[v],\qquad
 \mathcal Q_{J,R_0}[v],\qquad
 \mathcal R_{J,R_0}[v]
\]
denote the functionals obtained by replacing $u$ by $v$ in \eqref{eq:Morawetz-functional}, \eqref{eq:Q-def}, and \eqref{eq:R-explicit}, respectively. Brackets are omitted for the current solution $u$.

Let $u\in L_t^\infty H_x^1([0,T]\times\R^3)$ be a strong solution. For fixed $J\ge1$ and $R_0>0$,
\begin{equation}\label{eq:integrated-Morawetz}
 \mathcal I_{J,R_0}(t_2)-\mathcal I_{J,R_0}(t_1)
 =\int_{t_1}^{t_2}
 \bigl(\mathcal Q_{J,R_0}(t)+\mathcal R_{J,R_0}(t)\bigr)\dd t
\end{equation}
holds for all $0\le t_1<t_2\le T$. The endpoint estimate \eqref{eq:Morawetz-endpoint} and the quantitative remainder estimate \eqref{eq:R-bound-explicit} also hold for $u$. In particular, there is a constant $C_{u,T}$ depending only on $\|u\|_{L_t^\infty H_x^1([0,T])}$ such that
\begin{equation}\label{eq:R-bound-finite-interval}
 |\mathcal R_{J,R_0}(t)|
 \le\frac{C_{u,T}}J+\frac{C_{u,T}}{JR_0^2}
 \qquad(0\le t\le T).
\end{equation}
\end{proposition}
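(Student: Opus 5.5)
The plan is to approximate $u$ by smooth, rapidly decaying solutions, apply Propositions~\ref{prop:exact-Morawetz} and \ref{prop:Q-remainder} to them, and pass to the limit in every term. Fix $0\le t_1<t_2\le T$. Take Schwartz data $u_{0,n}\to u(t_1)$ in $H^1$. Since $[t_1,t_2]$ is compact in the lifespan, Proposition~\ref{prop:local-theory}(ii) shows that for large $n$ the solutions $u_n$ exist on $[t_1,t_2]$ and
\[
 u_n\to u \quad\text{in } C_tH_x^1([t_1,t_2]\times\R^3)\cap L^{10}_{t,x}.
\]
To use the smooth identities, the $u_n$ must be regular enough (say $C_tH^k$ with polynomial weights) for all integrations by parts to hold. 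For Schwartz data this follows from standard persistence of higher regularity and of weighted norms. Alternatively, one can first mollify the nonlinearity. I would simply cite the persistence of $H^k$ and $\langle x\rangle^k L^2$ regularity for the local theory of Proposition~\ref{prop:local-theory}. For each $n$, integrating \eqref{eq:I-Q-R} in time gives
\[
 \mathcal I_{J,R_0}[u_n](t_2)-\mathcal I_{J,R_0}[u_n](t_1)
 =\int_{t_1}^{t_2}\bigl(\mathcal Q_{J,R_0}[u_n]+\mathcal R_{J,R_0}[u_n]\bigr)\dd t .
\]

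The next step is continuity of each functional on $H^1$ with $J,R_0$ fixed. Every functional is built from the densities $m$, $p$, $\Rea(\partial_j u\,\partial_k\bar u)$ and $|u|^6$. If $v_n\to v$ in $H^1$, these converge in $L^1$: $m$ by Cauchy--Schwarz, $p$ via $|p[v_n]-p[v]|\le|v_n-v||\nabla v_n|+|v||\nabla(v_n-v)|$, the gradient products similarly, and $|v|^6$ through Sobolev embedding into $L^6$. Each functional is a pairing of two such $L^1$ densities against a bounded kernel. The relevant kernels are $A$, $\partial_kA_j$, $\operatorname{div}A$ and $\Delta\operatorname{div}A$, all bounded by Lemma~\ref{lem:kernel-bounds}, together with the smooth compactly supported parts of $\phi$ and $z\otimes\nabla\phi$. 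A bilinear form that is bounded on $L^1\times L^1$ is continuous, so $\mathcal I$ and $\mathcal R$ are continuous on $H^1$. For $\mathcal Q$, either use $\mathcal Q=\mathcal Q_{\mathrm{flat}}$, which is again a pairing against the bounded kernel $\phi$, or note that Tonelli makes the $c$- and $R$-integrals of $M_{R,c}K_{R,c}-|P_{R,c}|^2$ finite and continuous. Since convergence is uniform in $t$ on $[t_1,t_2]$, the time integrals converge as well (dominated convergence). This yields \eqref{eq:integrated-Morawetz}. The identity $\mathcal Q=\mathcal Q_{\mathrm{flat}}$ is an algebraic Tonelli computation, valid for any $H^1$ function, so the limiting $\mathcal Q[u]$ is the block defined in \eqref{eq:Q-def}.

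The endpoint bound \eqref{eq:Morawetz-endpoint} and the remainder bound \eqref{eq:R-bound-explicit} use only $\|A\|_\infty$, the kernel bounds and $L^1$ norms of the densities. They therefore hold directly for any $H^1$ function at each fixed time, with no approximation needed. Inserting $\sup_{[0,T]}\|u\|_{H^1}$ and Sobolev $\|u\|_6\lesssim\|\nabla u\|_2$ gives \eqref{eq:R-bound-finite-interval}.

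I expect the main obstacle to be justifying the smooth identity for the approximants: one needs enough decay and regularity that the boundary terms in the repeated integrations by parts vanish, especially the two derivatives moved onto $\operatorname{div}A$ and the $\partial_k T_{jk}$ terms. My first approach is to note that $A$ and its derivatives are bounded and that $H^3$ regularity of $u_n$ suffices for these steps, since every product is integrable and compactly supported kernels are not needed. The remaining risk is only the persistence of $H^3$ regularity on $[t_1,t_2]$, which follows from the finite $L^{10}_{t,x}$ norm by the usual iteration.
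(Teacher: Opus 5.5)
Your proposal is correct and follows essentially the paper's argument: approximate by solutions with Schwartz data converging in $C_tH^1_x\cap L^{10}_{t,x}$, then pass to the limit term by term via $L^1$ convergence of the densities paired against bounded kernels, using the flat representation \eqref{eq:Q-flat} for $\mathcal Q_{J,R_0}$. Your remarks that the endpoint and remainder bounds hold directly at each fixed time, and that $C_tH^3_x$ regularity of the approximants is enough to justify the smooth identity, are small refinements. The paper instead obtains those bounds by passing to the limit and leaves the regularity of the approximants implicit.
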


\begin{proof}
Choose $u_{0,n}\in\Sspace(\R^3)$ with
\begin{equation}\label{eq:smooth-data-approx}
 u_{0,n}\to u(0)\quad\text{in }H^1.
\end{equation}
Since $u$ is a strong solution and $[0,T]$ is compact, $S_{[0,T]}(u)<\infty$. Choose $\eta_*>0$ sufficiently small for the $H^1$ stability statement in Proposition~\ref{prop:local-theory}(iii) to be iterated, and then choose a finite partition
\[
 0=\tau_0<\tau_1<\cdots<\tau_L=T,
 \qquad
 S_{[\tau_{\ell-1},\tau_\ell]}(u)\le\eta_*
 \quad(1\le\ell\le L).
\]
Let $u_n$ be the smooth maximal-lifespan solution with initial data $u_{0,n}$. Stability and \eqref{eq:smooth-data-approx} imply that, for all sufficiently large $n$, $u_n$ exists on $[0,T]$ and
\begin{equation}\label{eq:H1-solution-convergence}
 u_n\to u
 \quad\text{in }C_tH_x^1([0,T]\times\R^3)
 \cap L^{10}_{t,x}([0,T]\times\R^3),
\end{equation}
with
\begin{equation}\label{eq:uniform-H1-approximants}
 \sup_n\|u_n\|_{L_t^\infty H_x^1([0,T])}<\infty.
\end{equation}
Uniformly for $t\in[0,T]$,
\begin{align}
 \bigl\||u_n|^2-|u|^2\bigr\|_1
 &\le(\|u_n\|_2+\|u\|_2)\|u_n-u\|_2\to0,
 \label{eq:mass-density-convergence}\\
 \|p[u_n]-p[u]\|_1
 &\le\|u_n-u\|_2\|\nabla u_n\|_2
   +\|u\|_2\|\nabla(u_n-u)\|_2\to0,\notag\\
 \bigl\||u_n|^6-|u|^6\bigr\|_1
 &\le C(\|u_n\|_6^5+\|u\|_6^5)\|u_n-u\|_6\to0,\notag\\
 \|\partial_ju_n\,\partial_k\overline{u_n}
   -\partial_ju\,\partial_k\bar u\|_1
 &\le\|\nabla(u_n-u)\|_2
       (\|\nabla u_n\|_2+\|\nabla u\|_2)\to0.
 \label{eq:stress-density-convergence}
\end{align}
For fixed $J,R_0$,
\begin{equation}\label{eq:fixed-kernel-bounded}
 A,\quad\nabla A,\quad\Delta\operatorname{div}A\in L^\infty(\R^3).
\end{equation}
If $K\in L^\infty$ and $f_n\to f$, $g_n\to g$ in $L^1$, then
\begin{align}
 &\left|\iint K(x-y)\bigl(f_n(x)g_n(y)-f(x)g(y)\bigr)\dd x\dd y\right|
 \notag\\
 &\quad\le\|K\|_\infty
 \left(\|f_n-f\|_1\|g_n\|_1
       +\|f\|_1\|g_n-g\|_1\right)
 \longrightarrow0.
 \label{eq:bounded-kernel-product-convergence}
\end{align}
For $\mathcal Q_{J,R_0}$, use the flat double-integral representation \eqref{eq:Q-flat}, rather than passing to the limit through the local representation involving $P_{R,c}/M_{R,c}$. The solution convergence \eqref{eq:H1-solution-convergence}, together with \eqref{eq:mass-density-convergence}--\eqref{eq:stress-density-convergence} and \eqref{eq:bounded-kernel-product-convergence}, yields convergence of the individual terms in $\mathcal I$, $\mathcal Q$, and $\mathcal R$, uniformly in time. The bounds \eqref{eq:uniform-H1-approximants} and \eqref{eq:fixed-kernel-bounded} give an integrable majorant. Passing to the limit in the smooth identity for $u_n$ proves \eqref{eq:integrated-Morawetz}. The endpoint and remainder estimates pass to the limit by the same argument, and taking the $H^1$ supremum on $[0,T]$ gives \eqref{eq:R-bound-finite-interval}.
\end{proof}

Under the additional assumptions of the main theorem, the preceding coercive estimates remain valid at $H^1$ regularity.

\begin{corollary}[Transfer of coercivity]\label{cor:coercivity-low-regularity}
Let $u$ be an $H^1$ strong solution. If $u$ satisfies the below-threshold condition \eqref{eq:main-threshold}, then \eqref{eq:local-coercivity} holds at every time. If, in addition, $u\not\equiv0$ and satisfies the almost-periodicity and bounded-scale hypotheses of Theorem~\ref{thm:finite-mass-rigidity}, then \eqref{eq:Q-positive-final} holds uniformly in time.
\end{corollary}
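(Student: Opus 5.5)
The plan is to observe that every ingredient in the proofs of Lemma~\ref{lem:local-coercivity} and Proposition~\ref{prop:Q-positive} is a fixed-time statement about the function $u(t)\in H^1(\R^3)$. None of them uses smoothness, decay, or the equation. So the transfer amounts to checking that each step makes sense for $H^1$ functions at a single time, rather than passing to a limit through smooth solutions.

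For the first claim, fix $t$ and write $f:=u(t)\in H^1$. By conservation of energy and continuity, the orbit stays on the stable branch, so Lemma~\ref{lem:coercivity} gives \eqref{eq:L6-gap} for $f$ uniformly in $t$. For each $R,c$, the quantities $M_{R,c},P_{R,c},K_{R,c},L_{R,c}$ are finite, since $|p|\le|u||\nabla u|\in L^1$ and $|u|^6\in L^1$. The function $v_{R,c}=e^{-\ii x\cdot\xi}f$ lies in $H^1_{\mathrm{loc}}$, and $\zeta_{R,c}v_{R,c}\in H^1$ because $\zeta_{R,c}$ is compactly supported and $\xi$ is a constant vector. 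The Galilean completion \eqref{eq:local-kinetic-completion} is a pointwise algebraic expansion followed by integration, so it holds verbatim. The Hölder/Sobolev estimate \eqref{eq:localized-potential-control} needs only $\zeta_{R,c}v_{R,c}\in\dot H^1$ and $f\in L^6$. The cutoff identity \eqref{eq:cutoff-gradient-identity} rests on the integration by parts \eqref{eq:cross-parts}. I will justify it using $\nabla|v|^2=2\Rea(\bar v\nabla v)\in L^1_{\mathrm{loc}}$ and the fact that $|v|^2\in W^{1,1}_{\mathrm{loc}}$ is paired with the compactly supported smooth $\nabla(\zeta_{R,c}^2)/2$. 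Alternatively, one can approximate $f$ by Schwartz functions in $H^1$ and note that both sides of \eqref{eq:local-coercivity} are continuous in $H^1$ for fixed $R,c$. The second route is cleaner: when $M_{R,c}>0$, the left side $M_{R,c}K_{R,c}-|P_{R,c}|^2$ is continuous in $H^1$. Using \eqref{eq:Galilean-identity}, the term $M_{R,c}\|\nabla(\zeta_{R,c}v_{R,c})\|_2^2$ equals $M_{R,c}\bigl(\int\zeta_{R,c}^2|\nabla u|^2-|P_{R,c}|^2/M_{R,c}\bigr)$ plus a cutoff term $\int\zeta\Delta\zeta|u|^2$, and each of these is $H^1$-continuous. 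The case $M_{R,c}=0$ is trivial, since both sides vanish: $u=0$ on $\supp\zeta_{R,c}$.

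For the second claim, I will rerun the chain Lemma~\ref{lem:physical-precompactness}, Lemma~\ref{lem:core-nondegeneracy}, Lemma~\ref{lem:positive-spatial-average}, Lemma~\ref{lem:Fubini}, Proposition~\ref{prop:Q-positive}. The first two use only almost periodicity, bounded scale, Corollary~\ref{cor:gradient-nondegenerate}, and Sobolev embedding, and each holds at $\dot H^1$ regularity. Lemma~\ref{lem:positive-spatial-average} uses Sobolev embedding applied to $\zeta_{R,c}v_{R,c}\in\dot H^1$. Lemma~\ref{lem:Fubini} uses Tonelli and finite mass, the latter coming from \eqref{eq:global-H1-budget}. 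Integrating the pointwise-in-$(R,c)$ inequality \eqref{eq:local-coercivity} then gives \eqref{eq:Q-positive-final}.

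The main point to check is measurability in $c$ and $R$, together with the use of \eqref{eq:Q-def} as the definition of $\mathcal Q$ at $H^1$ regularity. I will note that $c\mapsto(M_{R,c},P_{R,c},K_{R,c})$ is continuous, by dominated convergence, so all $c$-integrals are well defined. I will also check that \eqref{eq:Q-def} agrees with the flat representation \eqref{eq:Q-flat} used in Proposition~\ref{prop:low-regularity-transfer}. This follows from Lemma~\ref{lem:convolution} and Tonelli, and absolute integrability holds because $\int|P_{R,c}|^2\,\dd c\le CR^3\|p\|_1^2$ and $\int M_{R,c}|K_{R,c}|\,\dd c\le CR^3\M(u)\|u\|_{\dot H^1}^2$-type bounds are finite. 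Hence the lower bound \eqref{eq:Q-positive-final} applies to the same $\mathcal Q_{J,R_0}(t)$ that appears in \eqref{eq:integrated-Morawetz}, uniformly in $t\ge0$.
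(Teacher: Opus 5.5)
Your proposal is correct and follows the paper's own argument: the proof of Lemma~\ref{lem:local-coercivity} and the chain Lemma~\ref{lem:core-nondegeneracy}, Lemma~\ref{lem:positive-spatial-average}, Proposition~\ref{prop:Q-positive} are fixed-time arguments that need only $u(t)\in H^1$ and the $L^6$ gap \eqref{eq:L6-gap}, so they apply verbatim. Your extra checks make explicit points the paper leaves implicit: the $W^{1,1}_{\mathrm{loc}}$ justification of \eqref{eq:cross-parts}, measurability in $(R,c)$, and the absolutely convergent Fubini argument identifying \eqref{eq:Q-def} with the flat form \eqref{eq:Q-flat} used in Proposition~\ref{prop:low-regularity-transfer}. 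If you use the Schwartz-approximation alternative instead, note that the approximants satisfy \eqref{eq:L6-gap} only with $\delta_0$ replaced by $\delta_0-\varepsilon$; this is harmless after letting $\varepsilon\downarrow0$.
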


\begin{proof}
At a fixed time, the $H^1$ condition and \eqref{eq:L6-gap} give
\[
 u(t)\in H^1,
 \qquad
 \cS^2\|u(t)\|_6^4\le1-\delta_0.
\]
The proof of Lemma~\ref{lem:local-coercivity} therefore applies without any smoothness assumption. Under almost periodicity and \eqref{eq:main-bounded-scale}, Lemmas~\ref{lem:core-nondegeneracy} and \ref{lem:positive-spatial-average}, followed by Proposition~\ref{prop:Q-positive}, give the uniform main-block lower bound.
\end{proof}

The final absorption uses three scales. Once $R_0$ is fixed, the flat main block has a strictly positive time-independent lower bound, the nonflat kernel errors are $O(J^{-1})+O((JR_0^2)^{-1})$, and the Morawetz endpoint is bounded for fixed $J$. Thus one first fixes $R_0$, then chooses $J_0$ to absorb the errors, and only afterward lets the time length tend to infinity. No parameter depends on a later choice.

\subsection{The final contradiction}

\begin{proof}[Proof of Theorem~\ref{thm:finite-mass-rigidity}]
The result is immediate if $u\equiv0$. Suppose instead that $u\not\equiv0$. By \eqref{eq:Q-positive-final} and \eqref{eq:R-bound},
\[
 \mathcal Q_{J,R_0}(t)+\mathcal R_{J,R_0}(t)
 \ge c_2-\frac{C_u}{J}-\frac{C_u}{JR_0^2}.
\]
The radius $R_0$ has already been fixed by core nondegeneracy and is independent of $J$ and of the terminal time. Choose $J_0\ge1$ such that
\[
 \frac{C_u}{J_0}+\frac{C_u}{J_0R_0^2}\le\frac{c_2}{2}.
\]
Then
\[
 \mathcal Q_{J_0,R_0}(t)+\mathcal R_{J_0,R_0}(t)
 \ge\frac{c_2}{2}
 \qquad(t\ge0).
\]
By Proposition~\ref{prop:low-regularity-transfer}, specifically \eqref{eq:integrated-Morawetz}, and Corollary~\ref{cor:coercivity-low-regularity}, for every $T>0$,
\begin{equation}\label{eq:linear-growth}
 \mathcal I_{J_0,R_0}(T)-\mathcal I_{J_0,R_0}(0)
 \ge\frac{c_2}{2}T.
\end{equation}
On the other hand, \eqref{eq:Morawetz-endpoint} yields the finite constant
\[
 B_{u,J_0,R_0}
 :=Ce^{J_0}R_0\M(u)^{3/2}
 \sup_{t\ge0}\|\nabla u(t)\|_2
\]
and hence
\begin{equation}\label{eq:endpoint-two}
 |\mathcal I_{J_0,R_0}(T)-\mathcal I_{J_0,R_0}(0)|
 \le2B_{u,J_0,R_0}.
\end{equation}
Equations \eqref{eq:linear-growth} and \eqref{eq:endpoint-two} imply
\[
 \frac{c_2}{2}T\le2B_{u,J_0,R_0}
 \qquad\text{for every }T>0,
\]
a contradiction as $T\to\infty$.
\end{proof}

\section{The four-channel reduction}

\begin{proof}[Proof of Theorem~\ref{thm:closed-channels}]
Let $u_c$ be a nonzero one-sided normalized minimal critical element. By Theorem~\ref{thm:minimal-element} and Proposition~\ref{prop:one-sided-selection}, the solution automatically satisfies Proposition~\ref{prop:no-waste}; thus the no-waste Duhamel formula used below is not an additional hypothesis.

If $\mathfrak T(u_c)=\mathsf{FT}$, then $T_+<\infty$, and Proposition~\ref{prop:finite-time-exclusion} gives $u_c\equiv0$, contradicting nontriviality.

If $\mathfrak T(u_c)=\mathsf{RC}$, then $T_+=\infty$, $N(t)\ge1$, and $\mathsf K_+(u_c)<\infty$. Proposition~\ref{prop:rapid-cascade-exclusion} again gives $u_c\equiv0$, a contradiction.

Finally suppose that $\mathfrak T(u_c)=\mathsf{FM}$. By \eqref{eq:intro-channel-FM}, there is $t_0\ge0$ such that
\[
 u_c(t_0)\in\dot H^1\cap L^2=H^1,
 \qquad
 1\le N(t)\le N_+<\infty
 \quad(t\ge t_0).
\]
Persistence of $H^1$ regularity and uniqueness in the $\dot H^1$ class, supplied by Proposition~\ref{prop:local-theory}(ii), give
\[
 u_c\in C_tH_x^1([t_0,\infty)\times\R^3).
\]
After translating time, the tail solution still satisfies the below-threshold condition, almost periodicity, and the bounded-scale hypothesis. Theorem~\ref{thm:finite-mass-rigidity} therefore gives $u_c(t)=0$ for $t\ge t_0$. Uniqueness then yields $u_c\equiv0$, again a contradiction. Hence
\[
 \mathfrak T(u_c)\notin\{\mathsf{FT},\mathsf{RC},\mathsf{FM}\}.
\]
\end{proof}

\begin{proof}[Proof of Corollary~\ref{cor:residual-channel}]
If (a) holds, there is no nonscattering minimal critical element, and in particular no one-sided normalized minimal element with $\mathfrak T(u_c)=\mathsf{RQ}$. Thus (b) holds.

Conversely, suppose that (a) fails. After applying the time-reversal symmetry
\[
 u(t,x)\longmapsto\overline{u(-t,x)}
\]
if necessary, we may assume that scattering fails forward in time. Theorem~\ref{thm:minimal-element} and Proposition~\ref{prop:one-sided-selection} produce a nonzero one-sided normalized minimal critical element $u_c$. The four-channel partition gives
\[
 \mathfrak T(u_c)\in\{\mathsf{FT},\mathsf{RC},\mathsf{FM},\mathsf{RQ}\}.
\]
Theorem~\ref{thm:closed-channels} excludes the first three channels, and hence $\mathfrak T(u_c)=\mathsf{RQ}$, contradicting (b). Therefore (a) holds.
\end{proof}

\section*{Acknowledgements}
The authors would like to thank the supporting agencies for their continued support.

\section*{Funding}
The first author was supported by the Project ``Research on Nonlinear Partial Differential Equations'' (No. 2024KYCXTD018), the Special Projects in Key Areas of Guangdong Province (No. ZDZX1088), and the Fund of Guangzhou Municipal Science and Technology (No. 202102080428).

\section*{Disclosure statement}
No potential conflict of interest was reported by the authors.

\section*{Declaration of generative AI use}
The authors report generative AI was not used in their research or preparation of this manuscript.

\section*{Data availability statement}
No data were used in this study.

\end{document}